\numberwithin{equation}{section}
\newtheorem{thm}{Theorem}[section]
\newtheorem*{thm*}{Theorem}
\newtheorem{prop}[thm]{Proposition}
\newtheorem*{prop*}{Proposition}
\newtheorem{question}[thm]{Question}
\newtheorem{cor}[thm]{Corollary}
\newtheorem{defin}[thm]{Definition}
\newtheorem{lemma}[thm]{Lemma}
\newtheorem{example}[thm]{Example}
\newtheorem{remark}[thm]{Remark}
\newtheorem*{remark*}{Remark}
\newtheorem*{remarks*}{Remarks}
\newcommand{\ip}[1]{\langle #1 \rangle}
\newcommand{\GAC}{
	\begin{quote}
		\textbf{Generalized Alekseevsky Conjecture.}  Let $M = G/H$ be a homogeneous Ricci soliton with $c<0$.  Then $M$ is isometric to a simply-connected solvable Lie group with left-invariant solvsoliton metric.
\end{quote}
}
\begin{document}

\title[Strongly solvable spaces]{Strongly solvable spaces}
\author[Michael Jablonski]{Michael Jablonski
}
\thanks{MSC2010: 53C25, 53C30, 22E25\\
    This work was supported in part by NSF grant DMS-1105647.}
\date{March 6, 2014}

\begin{abstract}  This work builds on the foundation laid by Gordon and Wilson in the study of  isometry groups of solvmanifolds, i.e. Riemannian manifolds admitting a transitive solvable group of isometries.  We restrict ourselves to a natural class of solvable Lie groups called almost completely solvable; this class includes the completely solvable Lie groups.  When the commutator subalgebra contains the center, we have a complete description of the isometry group of any left-invariant metric using only metric Lie algebra information.

Using our work on the isometry group of such spaces, we study quotients of solvmanifolds.  Our first application  is to the classification of homogeneous Ricci soliton metrics.  We show that the verification of the  Generalized Alekseevsky Conjecture reduces to the simply-connected case.  Our second application is a generalization of a result of Heintze on the rigidity of existence of compact quotients for certain homogeneous spaces.  Heintze's result applies to spaces with negative curvature.  We remove all the geometric requirements, replacing them with algebraic requirements on the homogeneous structure.

\end{abstract}

\maketitle

\section{Introduction}
\label{sec: intro}
\subsection{Generalized Alekseevsky Conjecture}

The classification of non-compact, homogeneous Einstein metrics has been an actively pursued open problem for several decades.  In the 1970s, D. Alekseevskii conjectured that for any homogeneous space $G/K$ with $G$-invariant Einstein, the group $K$  is the maximal compact subgroup of $G$.  In particular, $G/K$ would be diffeomorphic to $\mathbb R^n$ and, if the group $G$ were linear, then $G/K$ would be isometric to a simply-connected, solvable Lie group with left-invariant metric.

To date the only known examples of non-compact, homogeneous Einstein metrics are isometric to solvable Lie groups with left-invariant metrics.  In the special case of solvable Lie groups, significant progress has been made and the classification  is well-understood from the works of Heber \cite{Heber} and Lauret \cite{LauretStandard}.

As it turns out, the techniques used to understand Einstein metrics on solvable Lie groups have proven extremely useful in the classification of a more general kind of metric, Ricci solitons, see \cite{Lauret:SolSolitons,Jablo:HomogeneousRicciSolitons}.  Recall, a Riemannian manifold $(M,g)$ is said to be Ricci soliton if there exists a smooth vector field $X$ on $M$ and constant $c\in \mathbb R$ such that $ric = cg + \mathcal L_Xg$.  Note, Einstein metrics are Ricci solitons with $X=0$.  As in the Einstein case, the only known examples of expanding homogeneous Ricci solitons (i.e.  $c<0$) are isometric to simply-connected solvable Lie groups with left-invariant metrics.  And so the following conjecture is believed by many to be true.
\GAC
In the conclusion of this conjecture, the Ricci soliton is expected to be a so-called solvsoliton, i.e.~a very special Ricci soliton with nice algebraic properties; see \cite{Jablo:HomogeneousRicciSolitons} or \cite{Lauret:SolSolitons}   for more information on solvsolitons.

One approach   towards the classification of non-compact, homogeneous Ricci solitons is the following   two-pronged strategy:    1) classify all such simply-connected spaces and 2) classify the quotients of the simply-connected spaces obtained in 1).  Very recently, Lafuente and Lauret \cite{LauretLafuente:StructureOfHomogeneousRicciSolitonsAndTheAlekseevskiiConjecture} have introduced a program for working on the simply-connected case and the results obtained reinforce the belief that the above conjecture is true.  Here we approach   2).

\begin{thm}\label{thm: strong gen alek conj reduced to s.c. case}
To verify the  Generalized Alekseevsky Conjecture, it suffices to verify it among simply-connected homogeneous manifolds.

More precisely, let $G/H$ be a homogeneous Ricci soliton which is covered by a (simply-connected) solvable Riemannian Lie group $S$.  Then $G/H$ is isometric to $S$, i.e. the covering is trivial, and $S$ may be chosen to be a solvsoliton.
\end{thm}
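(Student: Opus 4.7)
The plan is to write $G/H \simeq S/\Gamma$, where $\Gamma = \pi_1(G/H)$ is the deck transformation group of the covering $S\to G/H$ sitting as a discrete subgroup of $\mathrm{Isom}(S)$; the core problem reduces to showing $\Gamma = \{e\}$, after which the ``$S$ may be chosen a solvsoliton'' conclusion is delivered by pulling back the soliton structure and invoking Lauret's classification of left-invariant Ricci solitons on simply-connected solvable Lie groups. Thus I would first pull the soliton back along the covering, equipping $(S,g)$ with a left-invariant Ricci soliton structure with $c<0$, and (by that same classification) replace the cover within its isometry class so that $(S,g)$ is actually a solvsoliton. In particular $\mathrm{Ric}_g = cI + D$ for a symmetric derivation $D$ of the metric Lie algebra $\mathfrak s$.

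Second, I would analyze $\Gamma$ via a lifting argument. Letting $N$ denote the normalizer of $\Gamma$ in $\mathrm{Isom}(S)$, the quotient $N/\Gamma$ acts by isometries on $S/\Gamma = G/H$; for it to contain the transitive $G$-action the identity component $N^0$ must act transitively on $S$. Since $\Gamma$ is a discrete normal subgroup of the connected Lie group $N^0$ it is central in $N^0$, so every $\gamma\in\Gamma$ commutes with the entire transitive $N^0$-action. I would now invoke this paper's explicit description of $\mathrm{Isom}(S)^{0}$ for almost completely solvable groups whose commutator contains the center---a hypothesis one verifies for the $c<0$ solvsolitons at hand, using the fact that the soliton derivation acts with positive eigenvalues on the nilradical so no $\mathfrak a$-summand can sit in the center. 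A centralizer computation inside $\mathrm{Isom}(S)^{0} = S\cdot K^{0}$ then forces each $\gamma$ to act as a (left) translation by an element of $Z(S)$.

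Finally, I would close using the rigidity of the solvsoliton self-similar flow. The self-similar Ricci flow on $(S,g)$ is generated, up to isometries and rescaling, by the one-parameter group of Lie algebra automorphisms $\varphi_t = \exp(tD)$, and for this flow to descend to $S/\Gamma$ the discrete group $\Gamma$ must be $\varphi_t$-invariant for every $t$; by continuity in $t$ and discreteness of $\Gamma$ this means $\varphi_t$ fixes every element of $\Gamma$ pointwise. Since $Z(\mathfrak s)\subset \mathfrak n$ for any solvable Lie algebra and Lauret's theorem on the pre-Einstein derivation yields that the eigenvalues of $D$ on $\mathfrak n$ are strictly positive, $D$ has no nontrivial fixed vectors in $Z(\mathfrak s)$, so $\Gamma = \{e\}$. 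The main technical obstacle I anticipate is the middle step: invoking this paper's new isometry-group description and performing the centralizer computation to pin $\Gamma$ inside $Z(S)$, since this is where the novel machinery of ``strongly solvable spaces'' enters and where the ``commutator contains center'' hypothesis must be established for the solvsolitons in play.
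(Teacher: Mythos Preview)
Your approach is genuinely different from the paper's, and it has two real gaps.

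\textbf{What the paper actually does.} The paper never analyzes the deck group $\Gamma$ directly. Instead it proves that $\mathrm{Isom}(S)$ is \emph{linear} for any solvsoliton $S$ (Section~7, Proposition~\ref{prop: isom of solvsoliton splits}), hence $S$ is strongly solvable (Proposition~\ref{prop: linear isom implie strongly solvable}). It then lifts the transitive $G$-action to $S$, finds a transitive solvable subgroup inside the lift, and pushes it back down to conclude that $G/H$ is itself a solvmanifold (Proposition~\ref{prop: homog space covered by strong solv is solv}). At that point the paper invokes the black-box result from \cite{Jablo:HomogeneousRicciSolitons} that a Ricci-soliton solvmanifold is automatically simply-connected. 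No centralizer computation, no flow-descent argument.

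\textbf{First gap: admissibility is not automatic.} Your claim that ``the soliton derivation acts with positive eigenvalues on the nilradical so no $\mathfrak a$-summand can sit in the center'' does not establish $\mathfrak z(\mathfrak s)\subset[\mathfrak s,\mathfrak s]$. You have $\mathfrak z(\mathfrak s)\subset\mathfrak n(\mathfrak s)$, but $[\mathfrak s,\mathfrak s]$ can be a proper ideal of $\mathfrak n(\mathfrak s)$; the simplest obstruction is a flat Euclidean factor $\mathfrak u$, which is a perfectly legitimate solvsoliton with any $c<0$ (take $D=-cI$). The paper devotes all of Section~7 to this case, splitting $S=T\times U$ with $T$ admissible and $U$ abelian, and proving $\mathrm{Isom}(S)=\mathrm{Isom}(T)\times\mathrm{Isom}(U)$ by a separate argument using curvature and real GIT. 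Without this, you cannot invoke the isometry-group description from Section~\ref{sec: Algorithm for full isometry group}, and your centralizer computation loses its footing.

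\textbf{Second gap: the flow-descent step is unjustified.} You assert that ``for this flow to descend to $S/\Gamma$ the discrete group $\Gamma$ must be $\varphi_t$-invariant.'' But nothing forces the particular one-parameter family $\varphi_t=\exp(tD)$ to descend. The Ricci soliton structure on $G/H$ comes with its own vector field $X$; its lift $\widetilde X$ to $S$ is $\Gamma$-invariant and satisfies the soliton equation, but $\widetilde X$ and the vector field generating $\varphi_t$ differ by a Killing field of $(S,g)$. That Killing field need not be $\Gamma$-invariant, so you cannot conclude that $\varphi_t$ normalizes $\Gamma$. This is exactly the kind of subtlety that the paper's route through \cite{Jablo:HomogeneousRicciSolitons} absorbs; if you want to argue directly, you must control that Killing-field discrepancy, and doing so essentially requires the full isometry-group description you were hoping to use only lightly.
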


\begin{remark} The fact that the Ricci soliton metric above may be chosen to be a solvsoliton, after changing the solvable group, is proven in \cite{Jablo:HomogeneousRicciSolitons}.  The new content of this theorem is on the homogeneous spaces that $S$ covers, and while there often are homogeneous quotients of $S$, the above shows that none can be Ricci soliton.
\end{remark}

The strategy for proving this result is as follows.  Consider $G/H$ and its simply-connected cover $\widetilde{G/H}$; by hypothesis $S = \widetilde{G/H}$, for some solvable group $S$.  As $G/H$ comes equipped with a $G$-invariant metric, there exists a cover $\overline G$ of $G$ which is a subgroup of isometries of $S = \widetilde{G/H}$.  Thus, to understand the homogeneous spaces covered by a solvmanifold $S$, it suffices to understand the transitive groups of isometries of $S$.

We say a space is \textit{strongly solvable} if every transitive group of isometries contains a transitive solvable subgroup.  Observe, one consequence of being strongly solvable is that every homogeneous Riemannian manifold modelled on a solvmanifold is necessarily a solvmanifold.  Thus, if we can show that $\overline G$ above contains a transitive solvable subgroup, then $G/H$ will be a solvmanifold.  (We warn the reader that in general this is not the case, see Example \ref{ex: non-solvmanifold quotient of solvmanifold}.)  

 Theorem \ref{thm: strong gen alek conj reduced to s.c. case} is now a consequence of  the following theorem together with the work \cite{Jablo:HomogeneousRicciSolitons}, see Section \ref{sec: homogeneous Ricci solitons}  for more details.

\begin{thm} Let $S$ be a simply-connected solvable Lie group with left-invariant Ricci soliton metric.  Then $Isom(S)$ is linear and $S$ is strongly solvable.
\end{thm}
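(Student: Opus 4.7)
The plan is to first invoke \cite{Jablo:HomogeneousRicciSolitons} to replace $(S,g)$ by an isometric simply-connected completely solvable Lie group $S'$ carrying a left-invariant solvsoliton metric; this replacement does not change the Riemannian manifold, and hence does not change $Isom(S)=Isom(S')$. The solvsoliton structure forces the orthogonal complement $\mathfrak{a}$ of the nilradical $\mathfrak{n}$ to act on $\mathfrak{n}$ by symmetric (in particular non-zero semisimple) derivations, which gives $\mathfrak{z}(\mathfrak{s}') \subseteq \mathfrak{n}$. A short further argument, using that these derivations have no zero eigenvalue on any complement to $[\mathfrak{s}',\mathfrak{s}']$ inside $\mathfrak{n}$, yields $\mathfrak{z}(\mathfrak{s}') \subseteq [\mathfrak{s}',\mathfrak{s}']$. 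Thus $(S',g)$ lies in exactly the class of almost completely solvable metric Lie algebras (with commutator containing the center) to which the paper's main structure theorem applies, and that theorem describes $Isom(S')$ purely in terms of $(\mathfrak{s}', \langle \cdot,\cdot\rangle)$.

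Using that description, one has an almost direct product $Isom(S')_0 = S' \cdot K$, where $K$ is a compact group of orthogonal Lie algebra automorphisms of $\mathfrak{s}'$. Linearity of $Isom(S')$ then follows from three observations: (i) $K$ is compact, hence linear; (ii) a simply-connected completely solvable Lie group $S'$ is linear (for instance, $Ad : S' \to GL(\mathfrak{s}')$ has central kernel $Z$, which is abelian and simply-connected, and combining $Ad$ with a faithful representation of $Z$ gives a faithful representation of $S'$); and (iii) $Isom(S')/Isom(S')_0$ is finite, realized by isometric automorphisms of the metric Lie algebra, which preserves linearity.

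For strong solvability, let $G \leq Isom(S')$ act transitively with Levi decomposition $G_0 = L \ltimes R$ and a base point $p \in S'$. Since $S'$ is a solvable normal subgroup of $Isom(S')_0$ with compact quotient $K$, the semisimple factor $L$ projects entirely into $K$. Writing $L = K_L A_L N_L$ in Iwasawa form, the subgroup $H := A_L N_L R$ is solvable, and the plan is to show that $H$ already acts transitively on $S'$. Concretely, after arranging (by conjugation in $Isom(S')$) that the image of $L$ in $K$ lies inside the isotropy $K_p$ at $p$, the compact piece $K_L$ is absorbed into the isotropy of $G$, and the orbit $G\cdot p = S'$ must be swept out by the complementary solvable subgroup $H$.

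The main obstacle is precisely this final extraction step: one must verify that $K_L$, or more accurately its image in $K$, can actually be conjugated into the pointwise stabilizer $K_p$ without disturbing the requirement that the leftover solvable subgroup still acts transitively. This relies crucially on the rigid structure of $Isom(S')$ established earlier in the paper --- that $S'$ is a characteristic, simply-transitively acting, solvable normal subgroup of $Isom(S')_0$, and that the compact complement $K$ is realized by automorphisms fixing the identity. These facts let one conjugate $G$ by an element of $S'$ to move $p$ to the identity, where the full compact isotropy becomes available, and then apply a standard Gordon--Wilson-type argument to conclude that $H$ is transitive.
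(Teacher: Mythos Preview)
Your argument breaks at the admissibility step. The claim that a completely solvable solvsoliton $\mathfrak{s}'$ always satisfies $\mathfrak{z}(\mathfrak{s}')\subset[\mathfrak{s}',\mathfrak{s}']$ is false: take $\mathfrak{s}' = \mathfrak{h}_3 \times \mathbb{R}$ with the product nilsoliton metric (or simply $\mathbb{R}^n$). Here $\mathfrak{a}=0$, so there are no symmetric derivations $ad_A$ to invoke, and indeed the center strictly contains the commutator. The paper confronts exactly this obstruction in Section~\ref{sec: homogeneous Ricci solitons}: it splits $\mathfrak{s}=\mathfrak{t}\oplus\mathfrak{u}$ with $\mathfrak{t}$ admissible and $\mathfrak{u}$ a central complement to $[\mathfrak{s},\mathfrak{s}]\cap\mathfrak{z}(\mathfrak{s})$, observes that the solvsoliton metric makes this an orthogonal Riemannian product $S=T\times U$, and then proves the nontrivial fact that $Isom(S)=Isom(T)\times Isom(U)$ (Proposition~\ref{prop: isom of solvsoliton splits}). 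Only after this splitting can Theorem~\ref{thm: linear isom grp} be applied to the admissible factor $T$, while the flat factor $U$ is handled separately.

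There is a second, independent problem. You assert that the paper's structure theorem yields $Isom(S')_0 = S'\cdot K$ with $S'$ normal and $K$ a compact group of orthogonal automorphisms of $\mathfrak{s}'$. This is not what Theorem~\ref{thm: admissible almost compl solv has linear isometry group and reconstructable from metric lie algebra data} says, and it is false already for real hyperbolic space: there $S'=AN$ is not normal in $SO_0(n,1)$, and the maximal compact $SO(n)$ does not act by automorphisms of $\mathfrak{s}'$. The actual description (Theorem~\ref{thm: LR algorithm}) involves a semisimple Levi factor $G_1$ containing $S_1$ as an Iwasawa subgroup, together with a compact piece acting only on the complementary ideal $\mathfrak{s}_2$. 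Consequently your linearity argument via ``$K$ compact, $S'$ simply-connected solvable'' and your strong-solvability argument via ``$S'$ normal with compact quotient'' both collapse. The paper instead proves linearity by the more delicate route of Section~\ref{sec:proof of main theorems} (showing $K(G_{nc})$ is genuinely compact and building an explicit faithful representation), and then obtains strong solvability as an immediate corollary of linearity via Proposition~\ref{prop: linear isom implie strongly solvable}.
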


As all known examples of expanding homogeneous Ricci solitons (i.e. $c<0$) are simply-connected solvable Lie groups, we now have the fact that all known examples of such spaces have linear isometry groups.  This reinforces our belief that the Generalized Alekseevskii Conjecture is true.  We note that there do exist simply-connected solvable Lie groups with left-invariant metrics whose isometry groups are not linear, see   Example \ref{ex: bad metric on H^2 x R}.

In the special case of Einstein metrics, it is possible to deduce the above theorem from the works  \cite{AlekseevskyCortes:IsomGrpsOfHomogQuatKahlerMnflds,LauretStandard}; however, to our knowledge, the  observations above are new even in that special case.

In addition to the above reduction to the simply-connected setting, we address the following.

\begin{question}\label{question:when is homgo RS solvsoliton} If one had a homogeneous Ricci soliton at hand, is it possible to know if it is isometric to a solvsoliton?  Further, is there a simple procedure for accomplishing this?
\end{question}

Until the Generalized Alekseevsky Conjecture is proven, this question will remain important.  Using Proposition \ref{prop: test for isometric to solv}, we have such a procedure for determining when a given homogeneous space $G/H$ is locally isometric to a simply-connected strongly solvable Lie group. As all solvsolitons are such spaces by the above theorem, we have an affirmative answer to Question \ref{question:when is homgo RS solvsoliton}.

\subsection{General motivations}  Although much of our work could be motivated just by the application of classifying homogeneous Ricci solitons, we are further motivated by the following  general question:
	\begin{question}
	Given a homogeneous space
$G/H$ with a $G$-invariant metric, how much of the geometry of this Riemannian homogeneous space can be determined from local data? 
	\end{question}
 When $G/H$ is simply-connected, the Riemannian manifold can be completely reconstructed from a single tangent space $\mathfrak g/\mathfrak h$ with inner product.  Consequently,  all the geometry is encoded in a single tangent space $\mathfrak g/\mathfrak h$ with inner product.   While this is true in principle, in practice it is very difficult to recover much more than the equations for curvature from so little data.  

In this work, we are primarily interested in Riemannian solvmanifolds, i.e. homogeneous spaces with a transitive solvable group of isometries.  In the simply-connected case, such a space is isometric to a solvable Lie group with left-invariant metric.  We parse the question above into a more specific set of questions.   Let $S$ denote a solvable Lie group with left-invariant metric and metric Lie algebra $\mathfrak s$.  

\begin{enumerate}
	\item What does the algebraic information about $\mathfrak s$ tell us about $Isom(S)$?  Can we reconstruct the isometry group using information about the given metric Lie algebra?  

	\item What are the possible alternative homogeneous structures on $S$? I.e., what are the transitive groups of isometries of $S$?   

	\item Is there a procedure for determining when $G/H$ is isometric to a solvmanifold $S$?  

	\item 	What kind of (homogeneous) spaces are covered by $S$?  Can one say when there exists a metric which admits a compact quotient covered by $S$?
\end{enumerate}

By and large, answering the last three questions reduces to a thorough understanding of the isometry group of the given space.  This is where we start.

\subsection{Isometry groups of solvable Lie groups with left-invariant metrics}
Let $S$ be a simply-connected solvable Lie group with left-invariant metric $g$.  Denote the Lie algebra of $S$ by $\mathfrak s$ and the restriction of $g$ to $\mathfrak s$ by $g$.  Our goal is to reconstruct the full isometry group using only information coming from the metric Lie algebra $\{\mathfrak s,g\}$.

As $S$ is simply-connected, we have that $Aut(S) \simeq Aut(\mathfrak s)$ and the following group is a group of isometries
	$$(Aut(\mathfrak s) \cap O(g)) \ltimes S$$
Such a group of isometries is called the \textit{algebraic isometry group}.  

If $S$ is nilpotent or, more generally, completely solvable and unimodular, then this group of isometries is the full isometry group, see \cite{GordonWilson:IsomGrpsOfRiemSolv}.  In these cases, the isometry group is completely understood in terms of, and recoverable from, metric Lie algebra data.  

For a general homogeneous Riemannian manifold, very little in known about how the isometry group relates to the given homogeneous presentation.  In the presence of a transitive reductive group of isometries, progress has been made \cite{Gordon:RiemannianIsometryGroupsContainingTransitiveReductiveSubgroups}.  The only other case where progress has been made occurs when there exists a unimodular solvable group acting.  In \cite{Gordon:RiemannianIsometryGroupsContainingTransitiveReductiveSubgroups}, a procedure for reconstructing the isometry group is given;  
see Section \ref{sec: strongly solv spaces}  for more details.

In contrast to the above, the case that $S$ is non-unimodular is considerably more complicated (compare the following example to Example \ref{ex: bad metric on H^2 x R}).  Take $S$ which is an Iwasawa subgroup of a semi-simple Lie group $G$ of non-compact type.  As $S$ acts simply-transitively on a symmetric space, there exists a left-invariant metric on $S$ so that the (connected) isometry group is $G$.  Any general technique for constructing the isometry group of a solvmanifold would have to incorporate these two very different kinds of isometry groups that can arise.

We restrict ourselves to a natural, and fairly large, class of solvable Lie groups.

\begin{defin} A solvable Lie algebra $\mathfrak s$ is called almost completely solvable if, for $X\in \mathfrak s$, the following property holds: $ad\ X$ having only purely imaginary eigenvalues implies  $ad\ X$ has only the zero eigenvalue.  Moreover, we say a Lie group $S$ is almost completely solvable if its Lie algebra 
is so.
\end{defin}

This class of groups includes many familiar classes of Lie groups, including
\begin{itemize}
	\item Nilpotent Lie groups,
	\item More generally, completely solvable groups, i.e. when $ad~X$ has only real eigenvalues,
	\item Any solvable Lie group admitting a metric of negative curvature \cite{HeintzeHMNC},
	\item Any solvable Lie group admitting an Einstein metric \cite{LauretStandard}.
\end{itemize}

Note, the last two examples are not contained in the set of completely solvable groups.  To see that these last two examples are in the class of almost completely solvable groups   see Propositions \ref{prop: neg curv alg is almost compl solv} and \ref{prop: einstein solv is almost compl solv}.

\begin{defin} We say a Lie algebra $\mathfrak s$ is admissible if $\mathfrak z(\mathfrak s) \subset [\mathfrak s,\mathfrak s]$.  Moreover, we say a Lie group $S$ admissible if its Lie algebra is so.
\end{defin}

In the above list of examples, the last two examples are also examples of admissible Lie groups.

\begin{thm}\label{thm: admissible almost compl solv has linear isometry group and reconstructable from metric lie algebra data}
	Let $S$ be a simply-connected, admissible, almost completely solvable Lie group.  Given any left-invariant metric $g$ on $S$, the isometry group of $S$ is linear and, furthermore, can be constructed using only metric Lie algebra data.
\end{thm}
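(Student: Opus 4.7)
The plan is to pin down $Isom(S)$ in two stages: identify the identity component via the Gordon--Wilson normal modification procedure, then handle the component group and linearity. For the first stage, I would apply the Gordon--Wilson machinery to the transitive solvable subgroup $S \subset Isom(S)$, which produces a solvable Lie subgroup $\widetilde{S} \subset Isom(S)_0$ that is normal in the identity component, acts simply-transitively on $S$, and differs from $S$ by a ``twist'' along a skew-symmetric derivation of $\mathfrak{s}$ arising from the compact isotropy algebra $\mathfrak{k}$.  The key claim is that almost complete solvability forces $\widetilde{S} = S$: a nontrivial twist pairs an element $X \in \mathfrak{s}$ with a compensating element of $\mathfrak{k}$ in such a way that $ad\ X$ is forced to have a nonzero purely imaginary eigenvalue on some invariant subspace, contradicting the hypothesis.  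Consequently $Isom(S)_0 = K \ltimes S$ with $K \subset Aut(\mathfrak{s}) \cap O(g)$; that is, the identity component coincides with that of the algebraic isometry group.

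Next, I would upgrade this to the full isometry group using admissibility.  Any element of $Isom(S)$ normalizing $S$ induces an orthogonal automorphism of $(\mathfrak{s}, g)$; the condition $\mathfrak{z}(\mathfrak{s}) \subset [\mathfrak{s}, \mathfrak{s}]$ makes this correspondence injective, ruling out the kind of ``hidden'' isometries that arise without admissibility (cf.\ Example~\ref{ex: bad metric on H^2 x R}, where the central $\mathbb{R}$-factor of $H^2 \times \mathbb{R}$ supports isometries not visible to $Aut(\mathfrak{s})$).  Every component of $Isom(S)$ contains a representative normalizing $S$, since $Isom(S)$ must permute the canonical transitive solvable subgroup produced in the first step, so combining the two steps gives $Isom(S) = (Aut(\mathfrak{s}) \cap O(g)) \ltimes S$, recovered entirely from metric Lie algebra data.

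For linearity, a simply-connected almost completely solvable Lie group is linear: the standard obstructions to linearity of simply-connected solvable Lie groups (e.g., the Mautner group) involve nontrivial purely imaginary adjoint eigenvalues, which the hypothesis forbids.  Combined with the fact that $Aut(\mathfrak{s}) \cap O(g)$ is a closed subgroup of the real algebraic group $Aut(\mathfrak{s})$, hence linear, the semidirect product $Isom(S) = (Aut(\mathfrak{s}) \cap O(g)) \ltimes S$ is linear.  The main obstacle is the first stage: showing that the Gordon--Wilson twist really does force a purely imaginary adjoint eigenvalue somewhere in $\mathfrak{s}$ requires careful bookkeeping of how the modification derivation interacts with the adjoint representation, and it is here that almost complete solvability does its essential work.
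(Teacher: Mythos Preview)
Your proposal contains a fundamental error: the conclusion $Isom(S) = (Aut(\mathfrak s)\cap O(g))\ltimes S$ is simply false for admissible, almost completely solvable groups. Take $S$ to be the Iwasawa group $AN$ of $SL_2(\mathbb R)$ with the hyperbolic metric. This is completely solvable with trivial center, hence admissible and almost completely solvable, yet its full isometry group has identity component $PSL_2(\mathbb R)$, which is three-dimensional and semi-simple, while $(Aut(\mathfrak s)\cap O(g))\ltimes S$ is two-dimensional. The error enters at the very first step: the Gordon--Wilson group $\widetilde S$ in standard position is \emph{not} normal in $Isom(S)_0$; it is only an ideal of its normalizer $\mathfrak f = N_\mathfrak l(\mathfrak s'')\ltimes \mathfrak s''$, and $\mathfrak f$ is in general a proper subalgebra of $\mathfrak g$ (in the symmetric example above, $\mathfrak f = \mathfrak m + \mathfrak a + \mathfrak n$). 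You may be thinking of Wilson's theorem for nilpotent groups, which does give normality, but that fails already for completely solvable non-unimodular groups.

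The paper's actual argument is quite different and is designed precisely to accommodate the semi-simple isometries that your approach rules out. One first passes to the standard-position group $S''$ and shows almost complete solvability survives (Theorem~\ref{thm: std. mod. preserves almost compl. solv.}). Then, using an almost faithful representation of $G=Isom(S)_0$ and Mostow's conjugacy of maximal fully reducible subalgebras, one locates a compatible Levi decomposition $G=G_1G_2$ in which the maximal compact of the (possibly non-compact-type) Levi factor $G_1$ lands inside an isotropy group; this is where admissibility is used, via Corollary~\ref{cor:reductive elements of commutator of solvable} and Theorem~\ref{thm: admissible almost compl solv impies center in [r,r]}. Linearity of $G_1$ is then established by an explicit embedding into $Aut(S_2)\times Ad(G_1)$, and Goto's criterion finishes the linearity claim. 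The reconstruction from metric Lie algebra data is carried out via the \emph{LR-decomposition} $\mathfrak s'' = \mathfrak s_1 + \mathfrak s_2$ of Section~\ref{sec: Algorithm for full isometry group}: the factor $\mathfrak s_1$ is an Iwasawa subalgebra of a semi-simple $\mathfrak g_1$ and contributes genuinely semi-simple isometries, while $\mathfrak s_2$ contributes algebraic ones. Your proposal collapses $\mathfrak s_1$ to zero, which is exactly what fails in the symmetric-space examples.
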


Our procedure for constructing the isometry group of a solvable group as above is inspired by the work \cite{AlekseevskyCortes:IsomGrpsOfHomogQuatKahlerMnflds} where very special metric solvable Lie algebras are studied.  Our goal has been to remove any a priori metric requirements and replace them with purely algebraic constraints.  

We give a brief idea for how one obtains the isometry group of an admissible solvable Lie group here; see Section \ref{sec: Algorithm for full isometry group} for full details on reconstructing the isometry group for these groups.  
  The idea is to split apart our solvable Lie algebra $\mathfrak s = \mathfrak s_1 + \mathfrak s_2$ into a so-called \textit{LR-decomposition} where   $\mathfrak s_1$ is a subalgebra and $\mathfrak s_2$ is an ideal.  The first factor $\mathfrak s_1$ contributes a semi-simple group of isometries (as in the case of a symmetric space) and the second factor contributes algebraic isometries (as in the nilpotent or unimodular case).  By picking $\mathfrak s_1$ to be `maximal', one obtains the full isometry group.

\begin{remark} Even in the case of having a completely solvable Lie group, if one does not have the property of being admissible, it is very difficult to understand the isometry group from metric Lie algebra data.  See Example \ref{ex: bad metric on H^2 x R}.
\end{remark}

\subsection{Rigidity of existence of compact and finite volume quotients}  We also apply our work to the existence of compact quotients covered by solvmanifolds.
\begin{question}
Given a homogeneous space $G/H$, when does there exists a compact Riemannian manifold covered by $G/H$?  
\end{question}
It is well-known that $Isom(G/H)$ must be unimodular, but very little is known about the algebraic constraints placed on $G$ and $H$ to guarantee or preclude the existence of such a metric.

When $G/H$ is a nilmanifold $N$, the existence of such a metric is a purely algebraic problem.  More precisely, such $N$ admits a metric with compact quotient if and only if $\mathfrak n$ admits a rational structure.  This is known as Malcev's criterion.  Here $N$ actually admits a lattice itself and  the existence of one  metric with compact quotient on a nilmanifold implies every metric on that manifold admits a compact quotient.

Outside the realm of nilmanifolds, the problem is much more subtle.  Consider the familiar example of a solvable Lie group $S$ which is the Iwasawa subgroup of a semi-simple group $G$ of non-compact type.  While $S$ does not admit a lattice (as it is non-unimodular), we can endow it with the symmetric metric and so obtain a metric with compact quotients.  As we will see, the symmetric metric is the only one admitting a compact quotient.

\begin{question}  Are there metric or algebraic criteria that guarantee or preclude the existence of homogeneous metric on $G/H$ which admits a compact or finite volume quotient?
\end{question}

We produce such a criterion for a certain class of homogeneous spaces.  Given a Lie algebra $\mathfrak s$, denote by $\mathfrak{n(s)}$ the nilradical.

\begin{defin}  We say a solvable Lie algebra $\mathfrak s$ is positive if there exists $X\in \mathfrak s$ such that the  
eigenvalues of $ad~X|_{\mathfrak{n(s) } } $ have positive real part. Such an element $X$ will be referred to as a positive element of $\mathfrak s$.  We say a Lie group $S$ is positive if its Lie algebra is so.
\end{defin}

\begin{thm}\label{thm: general finite volume quotient is symmetric}  Let $S$ be a simply-connected, almost completely solvable, positive Lie group with left-invariant metric $g$.   The following are equivalent
	\begin{enumerate}
		\item $\{S,g\}$ has a compact quotient (as a Riemannian manifold).
		\item $\{S,g\}$ has a finite volume quotient.
		\item The full isometry group of $\{S,g\}$ is unimodular.
		\item $\{S,g\}$ is symmetric
	\end{enumerate}	
\end{thm}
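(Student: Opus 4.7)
The plan is to prove the cycle $(4)\Rightarrow(1)\Rightarrow(2)\Rightarrow(3)\Rightarrow(4)$, with the last implication carrying essentially all the content. First, positivity forces $S$ to be non-unimodular: a positive element $X$ satisfies $tr(ad\, X)=tr(ad\, X|_{\mathfrak{n(s)}})>0$, since $ad\, X$ descends to $0$ on the abelian quotient $\mathfrak{s}/\mathfrak{n(s)}$. Hence any symmetric left-invariant metric on $S$ is necessarily of non-compact type, and Borel's theorem furnishes cocompact lattices in its semisimple isometry group, giving a compact Riemannian quotient of $\{S,g\}$; this is $(4)\Rightarrow(1)$. The implication $(1)\Rightarrow(2)$ is trivial, and $(2)\Rightarrow(3)$ is the standard observation that the finite Riemannian volume on a quotient, together with compactness of the point-stabilizer, descends from a bi-invariant Haar measure on the identity component of $Isom(S,g)$, forcing it and hence the full isometry group to be unimodular.

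The substance is $(3)\Rightarrow(4)$. To invoke the machinery of Theorem \ref{thm: admissible almost compl solv has linear isometry group and reconstructable from metric lie algebra data} I first verify that positivity implies admissibility: a positive element acts invertibly on $\mathfrak{n(s)}$, so $\mathfrak{z(s)}\cap\mathfrak{n(s)}=0$, and since every abelian ideal of $\mathfrak{s}$ lies inside the nilradical this forces $\mathfrak{z(s)}=0\subset[\mathfrak{s},\mathfrak{s}]$. I can then reconstruct $Isom(S,g)$ via an LR-decomposition $\mathfrak{s}=\mathfrak{s}_1+\mathfrak{s}_2$ with $\mathfrak{s}_1$ chosen maximal, so that (up to a compact centralizer) the connected isometry group is the product of a semisimple Lie group $L$ of non-compact type, for which $S_1$ serves as an Iwasawa solvable subgroup, with the algebraic factor $K_2\ltimes S_2$ contributed by the ideal $\mathfrak{s}_2$.

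The semisimple factor $L$ is automatically unimodular, whereas $K_2\ltimes S_2$ is unimodular exactly when $S_2$ is. Hypothesis $(3)$ thus reduces to showing that, under positivity, unimodularity of $S_2$ forces $\mathfrak{s}_2=0$. I expect this from the maximality of $\mathfrak{s}_1$ in the LR-decomposition: any nontrivial unimodular complement on which a positive element of $\mathfrak{s}$ acts with positive-real-part eigenvalues could be absorbed into a larger semisimple factor, contradicting maximality of $\mathfrak{s}_1$; any residual piece of $\mathfrak{s}_2$ on which no positive element acts with positive spectrum must lie in the centralizer of every positive element and hence vanish by positivity. With $\mathfrak{s}_2=0$ we identify $\{S,g\}$ with the symmetric space $L/K_L$, proving $(4)$.

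The main obstacle is precisely this last absorption argument: pinning down the rigorous sense in which maximality of $\mathfrak{s}_1$ together with positivity and the unimodularity of $S_2$ forces $\mathfrak{s}_2$ to vanish. One must track how a positive element distributes across $\mathfrak{s}_1+\mathfrak{s}_2$ and exploit the explicit metric and bracket constraints from the LR algorithm of Section \ref{sec: Algorithm for full isometry group} to rule out mixed configurations in which part of $\mathfrak{s}_2$ is hidden by unimodular cancellations at the level of the full isometry group.
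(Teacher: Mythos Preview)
Your overall architecture is right, and your reductions up through ``$\mathfrak g$ unimodular $\Rightarrow \mathfrak s_2$ unimodular'' are fine (indeed, since $\mathfrak s_2$ is an ideal of $\mathfrak g$, for $X\in\mathfrak s_2$ one has $\operatorname{tr}_{\mathfrak g}\,ad\,X=\operatorname{tr}_{\mathfrak s_2}\,ad\,X$). The genuine gap is the ``absorption'' step. Maximality of $\mathfrak s_1$ in the LR-decomposition gives no mechanism for enlarging the semisimple factor just because some derivation of $\mathfrak s_2$ has positive-real-part spectrum: a nilpotent algebra carrying a positive derivation need not be the nilradical of any Iwasawa algebra, so there is in general nothing semisimple to absorb it into. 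Likewise, your fallback---that a residual piece of $\mathfrak s_2$ untouched by positive elements must centralize them and hence vanish---does not follow, since the positive elements of $\mathfrak s$ need not lie in $\mathfrak s_2$, and centralizing a positive element of $\mathfrak s$ is not the same as lying in $\mathfrak z(\mathfrak s)$.

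The paper closes this gap differently and more directly. First, it reduces to the case where $\mathfrak s$ is already in standard position, which requires an auxiliary lemma: the standard modification of an almost completely solvable, positive algebra is again almost completely solvable and positive (you skipped this; without it the LR-decomposition of Lemma~\ref{lemma: s2 properties} is not available). Then, rather than arguing from maximality, the paper proves that \emph{$\mathfrak s_2$ itself is positive} whenever it is nonzero. The idea is to write a positive element as $X=X_1+X_2$ along $\mathfrak s_1+\mathfrak s_2$, use full reducibility of the $\mathfrak g_1$-action on $\mathfrak s_2$ to arrange $X_2$ in a $\mathfrak g_1$-invariant complement of $\mathfrak n(\mathfrak s_2)$, and then compute traces on the generalized eigenspaces of $ad\,X_2|_{\mathfrak s_2}$: since $ad\,X_1$ comes from a semisimple representation it is traceless on each such eigenspace, forcing the real parts of the eigenvalues of $ad\,X_2|_{\mathfrak n(\mathfrak s_2)}$ to be positive. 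A positive $\mathfrak s_2$ is never unimodular, so unimodularity of $\mathfrak g$ forces $\mathfrak s_2=0$ immediately---no absorption needed.
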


This result was proven in a special case by Alekseevsky-Cortes \cite{AlekseevskyCortes:IsomGrpsOfHomogQuatKahlerMnflds}.  There the algebras $\mathfrak s$ are completely solvable, $\mathfrak s = \mathfrak a + \mathfrak{n(s)}$ with $ad~\mathfrak a$ abelian and fully reducible, and there are somewhat strong conditions on the metric $g$. e.g. $\mathfrak a \perp \mathfrak{n(s)}$.

\begin{cor}\label{cor: negative curv rigidity} Let $G/H$ be a homogeneous space which admits a $G$-invariant metric with negative curvature.  Now consider any $G$-invariant metric $g$ on $G/H$.  The Riemannian homogeneous space $\{G/H , g\}$ admits a finite volume quotient if and only if it is symmetric.
\end{cor}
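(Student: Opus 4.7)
The plan is to reduce the corollary to Theorem~\ref{thm: general finite volume quotient is symmetric} by exhibiting, after lifting to the universal cover, a simply-transitive almost completely solvable positive Lie group of isometries of $\{G/H,g\}$.  First I would pass to the universal cover: being symmetric is a local property, while a finite volume quotient of $G/H$ lifts to one of $\widetilde{G/H}$ by further quotienting by the deck group, and conversely.  So it suffices to treat the simply-connected case $\{\widetilde{G/H},\tilde g\}$, with the lifted invariant metrics $\tilde g$ and $\tilde g_0$, where $\overline G$ denotes the relevant cover of $G$ acting on $\widetilde{G/H}$.

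Since $\tilde g_0$ is a simply-connected, homogeneous Riemannian metric of negative curvature, Heintze's theorem gives a simply-transitive solvable subgroup $S \subseteq \mathrm{Isom}(\widetilde{G/H},\tilde g_0)$.  The Lie algebra has the standard form $\mathfrak s = \mathbb{R}A \ltimes \mathfrak n$ with $\mathrm{ad}\,A|_{\mathfrak n}$ having eigenvalues of positive real part, hence $\mathfrak s$ is almost completely solvable by Proposition~\ref{prop: neg curv alg is almost compl solv} and positive by definition.  Heintze further identifies $\mathrm{Isom}(\widetilde{G/H},\tilde g_0) = K \ltimes S$ with $K$ compact, and $\overline G$ sits inside this semidirect product while acting transitively with compact stabilizer.

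The main obstacle is to show that $\overline G$ itself contains a simply-transitive solvable subgroup $S'$, not merely that $\mathrm{Isom}(\widetilde{G/H},\tilde g_0)$ does.  I would compare the solvable radical of $\overline G$ with the solvable radical of $K \ltimes S$, the latter being $Z(K)^0 \cdot S$ with $Z(K)^0$ a compact torus; transitivity of $\overline G$ together with compactness of its stabilizer force a closed solvable subgroup $S' \subseteq \overline G$ to act transitively on $\widetilde{G/H}$, and such $S'$ inherits the almost completely solvable and positive properties from the underlying Heintze-type manifold structure.  This is the strong solvability of negatively curved solvmanifolds, which is the key structural input needed to move from a left-invariant setting to the general $G$-invariant setting.

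Having produced $S' \subseteq \overline G$ simply-transitive and preserving $\tilde g$ (because all of $\overline G$ does), the identification $\widetilde{G/H} \cong S'$ carries $\tilde g$ to a left-invariant metric $\bar g$ on the simply-connected, almost completely solvable, positive Lie group $S'$.  Theorem~\ref{thm: general finite volume quotient is symmetric} then yields that $\{S',\bar g\}$ admits a finite volume quotient if and only if it is symmetric, and transporting back through the covering finishes the corollary.
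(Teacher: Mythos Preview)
Your overall strategy matches the paper's: produce a simply-transitive solvable group $R\subset G$ (or $\overline G$) which is almost completely solvable and positive, identify any $G$-invariant metric with a left-invariant metric on $R$, and invoke Theorem~\ref{thm: general finite volume quotient is symmetric}.  The reduction and the final step are fine.

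The gap is exactly at the step you flag as the ``main obstacle.''  Your argument for why $\overline G$ contains a transitive solvable subgroup rests on the claim that Heintze identifies $\mathrm{Isom}(\widetilde{G/H},\tilde g_0)=K\ltimes S$ with $K$ compact.  Heintze proves that a simply-connected homogeneous manifold of negative curvature is isometric to a solvable Lie group of a particular algebraic type; he does \emph{not} compute the full isometry group, and in particular does not show it is a compact extension of $S$.  Your subsequent sketch---comparing solvable radicals of $\overline G$ and of $Z(K)^0\cdot S$---therefore has no foundation, and in any case is too vague to conclude that $\overline G$ contains a transitive solvable subgroup.

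The paper closes this gap with its own machinery rather than with anything from Heintze.  Since Heintze's $S$ is almost completely solvable (Proposition~\ref{prop: neg curv alg is almost compl solv}) and admissible (positivity forces $\mathfrak z(\mathfrak s)=0$), Theorem~\ref{thm: linear isom grp} gives that $\mathrm{Isom}(S,\tilde g_0)$ is linear, and then Proposition~\ref{prop: linear isom implie strongly solvable} gives strong solvability: every transitive subgroup of isometries, in particular $\overline G$, contains a transitive solvable subgroup $R$.  One then cites Heintze a second time to see that $R$ is again a negative-curvature algebra, hence almost completely solvable and positive, and finishes as you do.  So the missing ingredient in your proposal is precisely the linearity/strong-solvability theorem that the paper is built around; once you plug that in, your argument coincides with the paper's.
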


In the special case that the metric $g$ has negative curvature, this result was proven by Heintze \cite{Heintze:CompactQuotientsOfHomogNegCurvRiemMflds}.

\begin{cor}\label{cor: einstein alg rigidity} Let $S$ be solvable Lie group which admits an Einstein metric.  Consider a left-invariant metric $g$ on $S$.  The Riemannian homogeneous space $\{S,g\}$ admits a finite volume quotient if and only if it is symmetric.
\end{cor}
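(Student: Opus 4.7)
The plan is to recognize Corollary \ref{cor: einstein alg rigidity} as an instance of Theorem \ref{thm: general finite volume quotient is symmetric}. To apply the latter, one must verify that $S$ is simply-connected, almost completely solvable, and positive. The first of these can be arranged by passing to the universal Riemannian covering $\tilde S$: the symmetric condition is local and hence preserved under Riemannian coverings in both directions, while the existence of a finite volume quotient of $\{S,g\}$ translates to the existence of a finite volume quotient of $\{\tilde S,\tilde g\}$ via the standard correspondence between lattices in $\tilde S$ containing $\pi_1(S)$ and lattices in $S$.

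The properties ``almost completely solvable'' and ``positive'' are attributes of the Lie algebra $\mathfrak s$ alone and are independent of the chosen left-invariant metric. I therefore verify them using the Einstein metric $g_E$ afforded by hypothesis; once established, they will automatically hold for the arbitrary left-invariant metric $g$ appearing in the statement. Almost complete solvability of $\mathfrak s$ is precisely Proposition \ref{prop: einstein solv is almost compl solv}. For positivity, I invoke the Heber--Lauret standard structure theorem for Einstein solvmanifolds: $(\mathfrak s, g_E)$ decomposes $g_E$-orthogonally as $\mathfrak s = \mathfrak a \oplus \mathfrak{n(s)}$ with $\mathfrak a$ abelian, and there exists a distinguished mean curvature vector $H \in \mathfrak a$ for which $ad\ H|_{\mathfrak{n(s)}}$ is $g_E$-symmetric and positive definite. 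In particular its eigenvalues are real and strictly positive, so $H$ witnesses that $\mathfrak s$ is positive in the sense of the paper.

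With all three hypotheses verified, Theorem \ref{thm: general finite volume quotient is symmetric} applied to $\{S,g\}$ delivers the equivalence between admitting a finite volume quotient and being symmetric, which is exactly the content of the corollary.

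The one place demanding genuine care is the extraction of a positive element from the Einstein hypothesis; this is less a technical obstacle than a careful importation of the standard structure theory of Einstein solvmanifolds due to Heber and Lauret. The rest is bookkeeping: the reduction to the simply-connected case, and the observation that the Lie-algebraic hypotheses of the main theorem transfer freely from the Einstein metric $g_E$ (used only to verify them) to the arbitrary left-invariant metric $g$ (to which the main theorem is ultimately applied).
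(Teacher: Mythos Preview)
Your proof is correct and follows essentially the same approach as the paper: verify that the Lie algebra $\mathfrak s$ is almost completely solvable (Proposition \ref{prop: einstein solv is almost compl solv}) and positive via Lauret's standard structure theory, then invoke Theorem \ref{thm: general finite volume quotient is symmetric}. The paper's own proof is a two-line sketch citing Lauret's classification; your version supplies the details, in particular naming the mean curvature vector $H$ as the positive element and handling the passage to the simply-connected cover, but the underlying argument is the same.
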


In the special case that the metric $g$ is actually Einstein, this follows from the classification results of Lauret \cite{LauretStandard} together with the work of Alekseevsky-Cortes \cite{AlekseevskyCortes:IsomGrpsOfHomogQuatKahlerMnflds}.
\\

\textit{Acknowledgments.}  It is with pleasure that we thank Carolyn Gordon for helpful comments on an early version of this work.  We are especially grateful to her for providing us with the illustrative Example \ref{ex: non-solvmanifold quotient of solvmanifold}.

\subsection{Organization of the paper}
In Sections \ref{sec:prereq linear} and \ref{sec:prereq isom grp solvmfld}, we discuss prerequisites on linear Lie algebras and isometry groups of solvmanifolds, respectively.  In Section \ref{sec: strongly solv spaces} we discuss strongly solvable spaces.  In Section \ref{sec:proof of main theorems}, we present the proofs of our main results on the isometry groups of admissible, almost completely solvable spaces.  In Section \ref{sec: Algorithm for full isometry group}, we produce the algorithm for computing the full isometry group of such solvmanifolds.  Sections \ref{sec: homogeneous Ricci solitons} and \ref{sec: applications to compact quot of solv} are dedicated to the applications of classifying  homogeneous Ricci solitons and rigidity of existence of compact quotients, respectively.

\section{Prerequisites for linear Lie algebras}
\label{sec:prereq linear}
\subsection{Linear Lie algebras}

A Lie group $G$ is called {linear} if it is a subgroup of $GL(V)$, for some vector space $V$, and {linearizable} if there exists a faithful representation $\rho : G\to GL(V)$.  Furthermore, a Lie group $G<GL(V)$ is said to be fully reducible if given any invariant subspace of $V$, there exists an invariant complementary subspace in $V$.  Analogously, we define fully reducible Lie algebras.

Let $\mathfrak g \subset \mathfrak{gl}(V)$ be the Lie algebra of a fully reducible group $G<GL(V)$, then the algebra $\mathfrak g$ is fully reducible.  An example that will appear repeatedly in the sequel is when the group $G$ is compact.

Given a linear Lie algebra $\mathfrak g$, we say a subalgebra $\mathfrak h \subset \mathfrak g$ is a maximal fully reducible  subalgebra if it is not a proper subalgebra of a fully reducible subalgebra of $\mathfrak g$.

\begin{thm}[Mostow]\label{thm: mostow - mfr are conjugate}
Maximal fully reducible subalgebras of linear Lie algebras are conjugate via inner automorphisms coming from the nilradical.
\end{thm}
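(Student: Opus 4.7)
The plan is to combine the Malcev--Harish-Chandra theorem on conjugacy of Levi factors with a conjugacy theorem for maximal ``tori'' inside a solvable linear Lie algebra.

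First I would analyze the structure of a maximal fully reducible subalgebra $\mathfrak{h} \subset \mathfrak{g}$. Since $\mathfrak{h}$ is itself fully reducible, it is reductive, so $\mathfrak{h} = \mathfrak{h}_{ss} \oplus \mathfrak{z}(\mathfrak{h})$, where $\mathfrak{h}_{ss} = [\mathfrak{h},\mathfrak{h}]$ is semisimple and $\mathfrak{z}(\mathfrak{h})$ acts by semisimple operators on $V$. Then I would argue that $\mathfrak{h}_{ss}$ is a Levi subalgebra of $\mathfrak{g}$: it is contained in some Levi factor $\mathfrak{s}$, and $\mathfrak{s} + \mathfrak{z}(\mathfrak{h})$ is again fully reducible (Weyl's theorem on the semisimple part, together with a commuting family of semisimple operators from $\mathfrak{z}(\mathfrak{h})$), so maximality forces $\mathfrak{h}_{ss} = \mathfrak{s}$.

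Next, given two maximal fully reducible subalgebras $\mathfrak{h}_1, \mathfrak{h}_2$ with associated Levi factors $\mathfrak{s}_1, \mathfrak{s}_2$, I would invoke the Malcev--Harish-Chandra theorem to find $n \in \mathfrak{n}(\mathfrak{g})$ (the nilradical of $\mathfrak{g}$) with $\mathrm{Ad}(\exp n)(\mathfrak{s}_1) = \mathfrak{s}_2$. After applying this inner automorphism, we may assume $\mathfrak{s}_1 = \mathfrak{s}_2 = \mathfrak{s}$, and the problem reduces to conjugating the abelian toral parts $\mathfrak{z}(\mathfrak{h}_i) \subset C_{\mathfrak{r}}(\mathfrak{s})$, where $\mathfrak{r}$ is the radical of $\mathfrak{g}$. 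Note that $\mathfrak{c} := C_{\mathfrak{r}}(\mathfrak{s})$ is a solvable linear Lie algebra and each $\mathfrak{z}(\mathfrak{h}_i)$ is a maximal fully reducible subalgebra of $\mathfrak{c}$ (any larger one, together with $\mathfrak{s}$, would contradict maximality in $\mathfrak{g}$).

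The main work, and the step I expect to be the real obstacle, is to prove the solvable case: in any solvable linear Lie algebra $\mathfrak{c}$, two maximal fully reducible (i.e., toral abelian consisting of semisimple elements) subalgebras $\mathfrak{t}_1, \mathfrak{t}_2$ are conjugate by $\mathrm{Ad}(\exp x)$ for some $x$ in the nilradical $\mathfrak{n}(\mathfrak{c})$. I would prove this by induction on $\dim \mathfrak{c}$, patterning the argument on the standard proof of conjugacy of Cartan subalgebras of solvable Lie algebras: pick a minimal nonzero ideal $\mathfrak{a} \subset \mathfrak{n}(\mathfrak{c})$, use that the images $\overline{\mathfrak{t}_1}, \overline{\mathfrak{t}_2}$ in $\mathfrak{c}/\mathfrak{a}$ are conjugate by induction (lifting the conjugating element from $\mathfrak{n}(\mathfrak{c})/\mathfrak{a}$ to $\mathfrak{n}(\mathfrak{c})$), reducing to the case where $\mathfrak{t}_1 + \mathfrak{a} = \mathfrak{t}_2 + \mathfrak{a}$; a direct cohomological (or Fitting-decomposition) argument on the $\mathfrak{t}_i$-action on $\mathfrak{a}$ then produces the required $x \in \mathfrak{a}$. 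Finally, since $\mathfrak{n}(\mathfrak{c}) = \mathfrak{n}(C_{\mathfrak{r}}(\mathfrak{s})) \subset \mathfrak{n}(\mathfrak{g})$, the conjugating element remains in the nilradical of the original $\mathfrak{g}$, completing the proof.
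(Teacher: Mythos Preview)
The paper does not give its own proof of this theorem: immediately after the statement it writes ``For a proof, see \cite{Mostow:FullyReducibleSubgrpsOfAlgGrps}.'' So there is no in-paper argument to compare your proposal against; the result is simply quoted from Mostow's 1956 paper, and the companion structural statement $mfr(\mathfrak h)=\mathfrak h_1+mfr(\mathfrak h_2)$ (the paper's Lemma~\ref{lemma: levi factor commutes with mfr(g2)}) is likewise cited from the same source.

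Your overall strategy---show that a maximal fully reducible subalgebra has the form Levi factor plus a maximal torus in the radical centralizing it, conjugate the Levi factors by Malcev--Harish-Chandra, then conjugate the tori inside the solvable centralizer by an inductive argument---is the standard route and is essentially how Mostow proceeds. Two points, however, deserve tightening. First, your argument that $\mathfrak h_{ss}$ is a Levi factor of $\mathfrak g$ claims that $\mathfrak s+\mathfrak z(\mathfrak h)$ is fully reducible, but $\mathfrak z(\mathfrak h)$ is only known to commute with $\mathfrak h_{ss}$, not with the larger semisimple algebra $\mathfrak s$, so $\mathfrak s+\mathfrak z(\mathfrak h)$ need not even be a subalgebra; one fixes this by working inside the centralizer $C_{\mathfrak g}(\mathfrak z(\mathfrak h))$ of the torus and taking a Levi factor there. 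Second, the final containment $\mathfrak n(\mathfrak c)\subset\mathfrak n(\mathfrak g)$ with $\mathfrak c=C_{\mathfrak r}(\mathfrak s)$ is not automatic: ad-nilpotence on $\mathfrak c$ does not by itself force ad-nilpotence on $\mathfrak g$, so you should justify this (e.g., via the Jordan decomposition in $\mathfrak{gl}(V)$ together with the fact that elements of $\mathfrak c$ commute with $\mathfrak s$).
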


For a proof, see \cite{Mostow:FullyReducibleSubgrpsOfAlgGrps}.  As maximal fully reducible subalgebras are essentially unique, we denote a choice of such an algebra of $\mathfrak g$ by  $mfr(\mathfrak g)$.  
For a proof of the following, see Corollary 4.1, loc.~cit.  

\begin{lemma}\label{lemma: levi factor commutes with mfr(g2)}
Take a linear Lie algebra $\mathfrak h$ with Levi decomposition $\mathfrak h = \mathfrak h_1\ltimes \mathfrak h_2$.
	$$mfr (\mathfrak h) = \mathfrak h_1 + mfr(\mathfrak h_2)$$
for some maximal fully reducible subalgebra $mfr(\mathfrak h_2)$ of $\mathfrak h_2$.
\end{lemma}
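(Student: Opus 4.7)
The plan is to establish the decomposition by producing an $\mathfrak h_1$-stable choice of maximal fully reducible subalgebra of $\mathfrak h_2$ and then verifying that the resulting sum is fully reducible and maximal. The starting observation is that $\mathfrak h_1$, being semisimple, is automatically fully reducible on the ambient vector space $V$ by Weyl's theorem on complete reducibility, so each of the two summands on the right-hand side is individually fully reducible; the real content of the lemma is that they can be combined.

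First I would show that some maximal fully reducible subalgebra $\mathfrak m \subset \mathfrak h_2$ is normalized by $\mathfrak h_1$. Because $\mathfrak h_2$ is an ideal, $\mathfrak h_1$ acts on $\mathfrak h_2$ by derivations and hence permutes its maximal fully reducible subalgebras. Fix any such $\mathfrak m_0$. By Mostow's conjugacy theorem (Theorem \ref{thm: mostow - mfr are conjugate}), for every $h \in \mathfrak h_1$ there exists an element $n(h)$ in the nilradical $N$ of $\mathfrak h_2$ with $\exp(ad~h)(\mathfrak m_0) = \mathrm{Ad}(n(h))(\mathfrak m_0)$. The set of maximal fully reducible subalgebras of $\mathfrak h_2$ is thus an $\mathfrak h_1$-invariant orbit under the unipotent group $N/N_{\mathfrak m_0}$, and the Levi factor $\mathfrak h_1$ acts on this variety through a semisimple group. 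A standard cohomological (or averaging) argument using the semisimplicity of $\mathfrak h_1$ and the unipotency of $N$ then furnishes a fixed point, i.e.\ an $\mathfrak h_1$-stable conjugate $\mathfrak m$ of $\mathfrak m_0$; this $\mathfrak m$ is our $mfr(\mathfrak h_2)$.

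Next I would confirm that $\mathfrak h_1 + \mathfrak m$ is a fully reducible subalgebra of $\mathfrak h$. It is a subalgebra by the normalization just obtained, with Levi decomposition $\mathfrak h_1 \ltimes \mathfrak m$. To see full reducibility on $V$, first decompose $V$ into $\mathfrak m$-isotypic components using the fully reducible action of $\mathfrak m$. Since $\mathfrak h_1$ normalizes $\mathfrak m$, it permutes these isotypic components, and since $\mathfrak h_1$ itself is semisimple, Weyl's theorem breaks each component further into $\mathfrak h_1$-invariant pieces. The common refinement is a decomposition into $(\mathfrak h_1 + \mathfrak m)$-invariant subspaces, giving full reducibility.

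Finally I would verify maximality. Suppose $\widetilde{\mathfrak m} \supset \mathfrak h_1 + \mathfrak m$ is a fully reducible subalgebra of $\mathfrak h$, with Levi decomposition $\widetilde{\mathfrak m} = \widetilde{\mathfrak m}_1 \ltimes \widetilde{\mathfrak m}_2$. Then $\widetilde{\mathfrak m}_2$ is a fully reducible subalgebra of the radical $\mathfrak h_2$ containing $\mathfrak m$, so $\widetilde{\mathfrak m}_2 = \mathfrak m$ by maximality. The image of $\widetilde{\mathfrak m}_1$ in the Levi quotient $\mathfrak h/\mathfrak h_2$ is a Levi factor that contains the image of $\mathfrak h_1$, hence equals it; Malcev's conjugacy of Levi factors (applied inside $\widetilde{\mathfrak m}$) then forces $\widetilde{\mathfrak m}_1 = \mathfrak h_1$ after possibly adjusting by an inner automorphism from $\mathfrak m$, yielding $\widetilde{\mathfrak m} = \mathfrak h_1 + \mathfrak m$. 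The principal obstacle in the whole argument is the construction of the $\mathfrak h_1$-stable $\mathfrak m$ in the second paragraph: this is where Mostow's conjugacy plus semisimple-versus-unipotent cohomological vanishing must be combined to upgrade a mere permutation action into an honest fixed point.
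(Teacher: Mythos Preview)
The paper does not give its own proof of this lemma; it simply cites Mostow's Corollary~4.1 in \cite{Mostow:FullyReducibleSubgrpsOfAlgGrps}. Your outline is the natural strategy and is essentially what Mostow's argument accomplishes, so there is no genuine divergence to report. Two points are nevertheless worth tightening.

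First, your Step~2 is the entire content of the lemma, and you have reduced it to ``a standard cohomological (or averaging) argument'' for a semisimple group acting on a unipotent homogeneous space. That is correct, but it is precisely what Mostow's Corollary~4.1 supplies; you have relocated the citation rather than removed it. If you want a self-contained argument, you should spell out the relevant vanishing (e.g.\ $H^1(\mathfrak h_1,\mathfrak n)=0$ by Whitehead, applied through the lower central series of the nilradical) rather than invoke it by name.

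Second, your Step~3 only uses that $\mathfrak h_1$ \emph{normalizes} $\mathfrak m$, but what is actually needed---and what the paper uses immediately afterwards when it writes ``This is a Lie algebra direct sum''---is that $[\mathfrak h_1,\mathfrak m]=0$. A fully reducible linear Lie algebra is necessarily reductive with semisimple center, so if $[\mathfrak h_1,\mathfrak m]\neq 0$ the sum $\mathfrak h_1+\mathfrak m$ cannot be fully reducible. Your isotypic-component argument, carried out carefully over $\mathbb C$, does in fact force the commutation: each $\mathfrak m$-weight is fixed by the connected semisimple group, hence vanishes on $[\mathfrak h_1,\mathfrak m]$, so $[\mathfrak h_1,\mathfrak m]$ acts by zero on $V$. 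But you should state this conclusion explicitly, since without it the phrase ``the common refinement is a decomposition into $(\mathfrak h_1+\mathfrak m)$-invariant subspaces'' is not justified (the $\mathfrak h_1$-irreducibles inside an isotypic component are $\mathfrak m$-stable only because $\mathfrak m$ acts by scalars there, which is exactly commutation). Once commutation is established, your maximality Step~4 actually simplifies: one checks directly that $\widetilde{\mathfrak m}_1\cap\mathfrak h_2$ is a solvable ideal of the semisimple $\widetilde{\mathfrak m}_1$, hence zero, giving $\widetilde{\mathfrak m}_1=\mathfrak h_1$ without any appeal to Malcev conjugacy.
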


In practice, to select the maximal fully reducible subalgebra of a solvable algebra, it will be useful to apply a strong version of Lie's theorem.

\begin{thm}[Lie's theorem]\label{Lie's theorem} Let $\mathfrak s$ be a solvable subalgebra of $\mathfrak{gl}(V)$, where $V$ is a complex vector space.  There exists a basis of $V$ such that
	\begin{enumerate}
		\item $\mathfrak s$ is a subalgebra of upper triangular matrices,
		\item $mfr(\mathfrak s)$ is a subalgebra of diagonal matrices,
		\item the nilradical $\mathfrak{n(s)} = \mathfrak n_1 + \mathfrak n_2$, where $\mathfrak n_1$ is a subalgebra of strictly upper triangular matrices and $\mathfrak n_2$ is some complement of $[\mathfrak s,\mathfrak s] \cap \mathfrak z(\mathfrak s)$ in the center $\mathfrak z(\mathfrak s)$.
	\end{enumerate}
\end{thm}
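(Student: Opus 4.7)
The plan is to establish the three conclusions in order, refining the basis at each stage so that later claims stay compatible with earlier ones. Claim (1) is just the classical form of Lie's theorem: solvability of $\mathfrak{s}$ and the fact that $V$ is complex produce a complete $\mathfrak{s}$-invariant flag $0 = V_0 \subset V_1 \subset \cdots \subset V_n = V$, and any basis adapted to this flag triangularizes $\mathfrak{s}$.

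For (2), the key structural input is that a solvable fully reducible subalgebra of $\mathfrak{gl}(V)$ over $\mathbb{C}$ is necessarily abelian and acts by scalars on each irreducible summand (solvable + irreducible over $\mathbb{C}$ implies one-dimensional). Thus $mfr(\mathfrak{s})$ is abelian and simultaneously diagonalizable, giving a weight space decomposition $V = \bigoplus_\alpha V_\alpha$. Starting from the flag of (1), I would refine as follows: inductively pick $v_i \in V_i$ with $v_i \notin V_{i-1}$ that is a weight vector for $mfr(\mathfrak{s})$. Such a $v_i$ exists because the quotient $V_i/V_{i-1}$ is a one-dimensional $mfr(\mathfrak{s})$-module (say of weight $\chi_i$), and since $mfr(\mathfrak{s})$ is semisimple we have $V_i = (V_i)_{\chi_i} \oplus \bigoplus_{\alpha \neq \chi_i}(V_i)_\alpha$ with $(V_i)_{\chi_i}$ projecting nontrivially to $V_i/V_{i-1}$. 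In this refined basis $\mathfrak{s}$ is still upper triangular (we only moved within flag levels) and $mfr(\mathfrak{s})$ is diagonal.

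For (3), using the basis from (2), I record several structural facts: $\mathfrak{z}(\mathfrak{s})$ is an abelian ideal, hence $\mathfrak{z}(\mathfrak{s}) \subset \mathfrak{n(s)}$; $[\mathfrak{s},\mathfrak{s}]$ has vanishing diagonal entries in any triangularizing basis, so $[\mathfrak{s},\mathfrak{s}] \subset $ strictly upper triangular; $\mathfrak{s} \cap (\text{strictly upper})$ is a nilpotent ideal of $\mathfrak{s}$, hence contained in $\mathfrak{n(s)}$; and $mfr(\mathfrak{s}) \cap \mathfrak{n(s)} \subset \mathfrak{z}(\mathfrak{s})$, because any element of this intersection is simultaneously semisimple (as a member of $mfr(\mathfrak{s})$) and ad-nilpotent on $\mathfrak{s}$ (as a member of the nilradical), forcing its adjoint action on $\mathfrak{s}$ to be zero. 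I then decompose $\mathfrak{n(s)}$ into $ad(mfr(\mathfrak{s}))$-weight spaces $\mathfrak{n(s)} = \bigoplus_\chi \mathfrak{n(s)}_\chi$. For $\chi \neq 0$, any vector in $\mathfrak{n(s)}_\chi$ has zero diagonal (by the diagonal action of $mfr(\mathfrak{s})$) and lies in $[mfr(\mathfrak{s}),\mathfrak{n(s)}] \subset [\mathfrak{s},\mathfrak{s}]$; so $\bigoplus_{\chi \neq 0}\mathfrak{n(s)}_\chi \subset \mathfrak{n(s)} \cap (\text{strictly upper})$, and this will be absorbed into $\mathfrak{n}_1$. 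Taking $\mathfrak{n}_1 := \mathfrak{n(s)} \cap (\text{strictly upper})$ and $\mathfrak{n}_2$ any vector-space complement of $[\mathfrak{s},\mathfrak{s}] \cap \mathfrak{z}(\mathfrak{s})$ in $\mathfrak{z}(\mathfrak{s})$, the desired equality $\mathfrak{n(s)} = \mathfrak{n}_1 + \mathfrak{n}_2$ amounts to showing $\mathfrak{n(s)}_0 \subset (\mathfrak{n(s)}\cap\text{strictly upper}) + (mfr(\mathfrak{s}) \cap \mathfrak{n(s)})$.

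This last inclusion is the step I expect to be the main obstacle. The plan here is to invoke Mostow's decomposition $\mathfrak{s} = mfr(\mathfrak{s}) + \mathfrak{n(s)}$ restricted to the zero-weight piece, giving $\mathfrak{s}_0 = mfr(\mathfrak{s}) + \mathfrak{n(s)}_0$, and then argue that, in the refined basis, every $x \in \mathfrak{n(s)}_0$ can be written as $x = t + u$ with $t \in mfr(\mathfrak{s})$ diagonal and $u \in \mathfrak{s} \cap (\text{strictly upper}) \subset \mathfrak{n(s)}$; since $t = x - u \in \mathfrak{n(s)}$ as well, $t \in mfr(\mathfrak{s}) \cap \mathfrak{n(s)} \subset \mathfrak{z}(\mathfrak{s})$. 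The delicate part is that the diagonal part of $x$ must actually belong to $mfr(\mathfrak{s})$ — i.e.\ that the basis of (2) has been chosen carefully enough that every abelian semisimple piece of $\mathfrak{s}$ is visible inside $mfr(\mathfrak{s})$ and not hidden by a non-Jordan triangularization. Once this is verified, the inclusions chain together: $\mathfrak{n}_1 + \mathfrak{n}_2 \supseteq \mathfrak{n}_1 + (\mathfrak{n}_2 + ([\mathfrak{s},\mathfrak{s}]\cap\mathfrak{z}(\mathfrak{s}))) = \mathfrak{n}_1 + \mathfrak{z}(\mathfrak{s}) \supseteq \mathfrak{n}_1 + (mfr(\mathfrak{s}) \cap \mathfrak{n(s)})$, and the reverse containment $\mathfrak{n}_1 + \mathfrak{n}_2 \subseteq \mathfrak{n(s)}$ is automatic.
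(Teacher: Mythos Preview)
Your treatment of (i) matches the paper's.  For (ii) you take a genuinely different route: the paper simply observes that the diagonal matrices form a maximal fully reducible subalgebra of the full upper triangular algebra $\mathfrak t$, and then invokes Mostow's conjugacy theorem (Theorem~\ref{thm: mostow - mfr are conjugate}) inside $\mathfrak t$ --- the conjugating element lies in the nilradical of $\mathfrak t$, i.e.\ is unipotent upper triangular, so it carries $mfr(\mathfrak s)$ into the diagonal while keeping $\mathfrak s$ upper triangular.  Your flag-refinement argument is more constructive and avoids the black box; either is fine.

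For (iii), the concern you flag is real, and your proposed resolution does not work.  The decomposition $\mathfrak s = mfr(\mathfrak s) + \mathfrak n(\mathfrak s)$ is simply false in general: take $\mathfrak s = \langle\, \mathrm{diag}(1,1,0)+E_{12},\; E_{13}\,\rangle \subset \mathfrak{gl}_3(\mathbb C)$, where $mfr(\mathfrak s)=0$ (no nonzero element is semisimple) but $\mathfrak n(\mathfrak s)=\langle E_{13}\rangle$ is a proper ideal.  So you cannot in general split $x\in\mathfrak n(\mathfrak s)_0$ as $t+u$ with $t\in mfr(\mathfrak s)$ just by appealing to such a decomposition.  Worse, part (iii) as stated is false without further hypotheses: let $\mathfrak s=\langle\, I_3+E_{12},\,E_{23},\,E_{13}\,\rangle\subset\mathfrak{gl}_3(\mathbb C)$.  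This is (abstractly) the Heisenberg algebra, so $\mathfrak n(\mathfrak s)=\mathfrak s$, while $\mathfrak z(\mathfrak s)=[\mathfrak s,\mathfrak s]=\langle E_{13}\rangle$, forcing $\mathfrak n_2=0$.  But $I_3+E_{12}$ has trace $3$, so it cannot be strictly upper triangular in any basis; hence $\mathfrak n(\mathfrak s)\neq\mathfrak n_1$.

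The paper's one-line argument for (iii) (``the nilradical is precisely the set of ad-nilpotent elements'') does not address this either.  What saves the paper is that its applications (e.g.\ Lemma~\ref{lemma: eigenvalues of ad X = those of ad X+N for N nilpotent} and the proof of Theorem~\ref{thm: std. mod. preserves almost compl. solv.}) only invoke the theorem for the \emph{adjoint} representation $ad(\mathfrak s)\subset\mathfrak{gl}(\mathfrak s^{\mathbb C})$, where every element of $ad(\mathfrak n(\mathfrak s))$ is a nilpotent operator on the nose, so $\mathfrak n_1 = ad(\mathfrak n(\mathfrak s))$ is automatically strictly upper triangular and $\mathfrak n_2$ plays no role.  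Your instinct that something delicate is happening in (iii) was correct; the fix is to restrict the claim to representations in which $\mathfrak n(\mathfrak s)$ acts by nilpotent endomorphisms, rather than to try to push through the general statement.
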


\begin{proof}
The first condition is the classical form of Lie's theorem.  To prove (ii), one recognizes that the set of diagonal matrices is a maximal fully reducible subalgebra of the set of upper triangular matrices.  The result follows by applying Mostow's theorem on the conjugacy of maximal fully reducible subalgebras to the algebra of upper triangular matrices.

The proof of (iii) reduces to the observation that for a solvable Lie algebra, the nilradical is precisely the set of ad-nilpotent elements.
\end{proof}

\begin{cor}\label{cor:reductive elements of commutator of solvable}
	Let $\mathfrak s$ be a solvable subalgebra of $\mathfrak{gl}(V)$.  Then $Y\in [\mathfrak s,\mathfrak s]$ is fully reducible if and only if $Y=0$.
	\end{cor}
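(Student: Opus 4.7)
The ``if'' direction is trivial: the zero endomorphism is fully reducible. For the ``only if'' direction, the plan is to combine Lie's theorem with the observation that a nilpotent element which is fully reducible must vanish.

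First I would pass, if necessary, to the complexification: let $\mathfrak s_\mathbb{C} \subset \mathfrak{gl}(V_\mathbb{C})$ be the complexified algebra. This is still solvable, and fully reducibility is preserved under scalar extension in the sense that $Y$ acts semisimply on $V$ iff $Y\otimes 1$ acts semisimply on $V_\mathbb{C}$. Moreover $[\mathfrak s_\mathbb{C},\mathfrak s_\mathbb{C}] = [\mathfrak s,\mathfrak s]\otimes \mathbb{C}$, so it suffices to prove the statement over $\mathbb{C}$.

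Next, apply part (i) of Lie's theorem (Theorem \ref{Lie's theorem}) to choose a basis of $V_\mathbb{C}$ in which every element of $\mathfrak s_\mathbb{C}$ is upper triangular. Then any bracket $[X_1,X_2]$ of upper triangular matrices is strictly upper triangular, hence the entire commutator subalgebra $[\mathfrak s_\mathbb{C},\mathfrak s_\mathbb{C}]$ consists of strictly upper triangular matrices. In particular, every $Y \in [\mathfrak s,\mathfrak s]$ is nilpotent as an endomorphism of $V_\mathbb{C}$.

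Finally, an element of $\mathfrak{gl}(V_\mathbb{C})$ which is simultaneously nilpotent and fully reducible (i.e.\ semisimple) must be zero: its Jordan decomposition is $Y = Y$ (semisimple part) $= 0$ (nilpotent part is itself), forcing $Y = 0$. This finishes the proof. There is no real obstacle here; the only point requiring any care is the scalar extension, which is routine since semisimplicity of a single linear map is insensitive to extending the ground field.
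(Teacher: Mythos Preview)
Your argument is correct and follows essentially the same route as the paper: reduce to the complex case by complexification, then use Lie's theorem to see that every element of $[\mathfrak s,\mathfrak s]$ is strictly upper triangular, hence nilpotent, and conclude that a fully reducible (i.e.\ semisimple) nilpotent endomorphism must vanish. The paper leaves the complex case implicit as an immediate corollary of its strengthened Lie's theorem and only records the complexification step; you have simply written out what the paper takes for granted.
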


This result holds for real as well as complex vector spaces.  To see this, one can simply complexify a given real space and then apply the result over $\mathbb C$.

\begin{lemma}
\label{lemma: eigenvalues of ad X = those of ad X+N for N nilpotent}
Consider a solvable Lie algebra $\mathfrak s$ with nilradical $\mathfrak{n(s)}$.  For $X\in\mathfrak s$, the  eigenvalues of $ad~X$ are the same as those of $ad~(X+N) = ad~X + ad~N$ for any $N\in\mathfrak{n(s)}$.
\end{lemma}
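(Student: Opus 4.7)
The plan is to invoke Lie's theorem in the strong form already established as Theorem~\ref{Lie's theorem}. After complexifying if necessary, I may assume the ground field is $\mathbb{C}$, since the eigenvalues of $ad~X$ on $\mathfrak{s}$ coincide with those of its complexification acting on $\mathfrak{s}_{\mathbb{C}}$.

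First I would apply Theorem~\ref{Lie's theorem} to the solvable linear Lie algebra $ad(\mathfrak{s}) \subset \mathfrak{gl}(\mathfrak{s}_{\mathbb{C}})$: there is a basis of $\mathfrak{s}_{\mathbb{C}}$ in which every operator $ad(Y)$, $Y \in \mathfrak{s}$, is simultaneously upper triangular. Since the eigenvalues (with multiplicity) of an upper triangular matrix are exactly its diagonal entries, the desired conclusion reduces to showing that $ad(X)$ and $ad(X+N) = ad(X) + ad(N)$ have the same diagonal in this basis, i.e.\ that $ad(N)$ has zero diagonal.

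Next I would exploit the fact---invoked in the proof of part (iii) of Theorem~\ref{Lie's theorem}---that in a solvable Lie algebra the nilradical coincides with the set of ad-nilpotent elements. Hence $ad(N)$ is a nilpotent operator on $\mathfrak{s}_{\mathbb{C}}$. Being simultaneously upper triangular and nilpotent, it must have zero diagonal, so the diagonals of $ad(X)$ and $ad(X+N)$ agree and the two operators have the same spectrum.

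The only point that deserves any caution is the \emph{simultaneous} triangularization of $ad(X)$ and $ad(N)$ in the same basis, but this is automatic because Theorem~\ref{Lie's theorem} is applied to the whole algebra $ad(\mathfrak{s})$ at once rather than to individual elements. There is no substantive computational obstacle; the lemma follows once the two standard structural facts above are assembled.
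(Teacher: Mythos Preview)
Your proof is correct and follows essentially the same approach as the paper's: complexify, apply Lie's theorem to $ad(\mathfrak s)$ to simultaneously upper-triangularize, and observe that $ad~N$ is strictly upper triangular (since $N$ is ad-nilpotent), so adding it does not change the diagonal entries and hence the eigenvalues. The paper phrases the last step slightly differently by invoking part (iii) of Theorem~\ref{Lie's theorem} directly to place $ad~\mathfrak{n(s)}$ among the strictly upper triangular matrices, but the content is the same.
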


\begin{proof}
To prove this, we apply Lie's Theorem to the solvable algebra $ad~\mathfrak s^\mathbb C$, where $\mathfrak s^\mathbb C$ is the complexification of $\mathfrak s$.  By the theorem above, this subalgebra of $\mathfrak{gl}(\mathfrak s^\mathbb C)$ may be considered as a subalgebra of upper triangular matrices with $ad~\mathfrak{n(s)}$ in the set of strictly upper triangular matrices.  As the zeros of $|ad~X - \lambda Id|$ are precisely the diagonal entries, it is now clear that $ad~X$ and $ad~X + ad~N$ have the same eigenvalues.
\end{proof}

\begin{remark}  For the above, we note that multiplicities of the eigenvalues do change, in general.
\end{remark}

\begin{prop}\label{prop: neg curv alg is almost compl solv}
	Let $\mathfrak s$ be a solvable Lie algebra such that the corresponding simply-connected Lie group $S$ admits a left-invariant metric of negative curvature.  Then $\mathfrak s$ is almost completely solvable.
\end{prop}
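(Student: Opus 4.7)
The plan is to combine Heintze's classical structure theorem for Lie groups of negative curvature with Lemma \ref{lemma: eigenvalues of ad X = those of ad X+N for N nilpotent}, which was set up precisely for this kind of argument.

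First, I would invoke Heintze's theorem from \cite{HeintzeHMNC}: if $S$ is a simply-connected solvable Lie group admitting a left-invariant metric of negative curvature, then, as a vector space, $\mathfrak{s} = \mathbb{R}A \oplus \mathfrak{n(s)}$, where $A$ is some element chosen so that every eigenvalue of $\mathrm{ad}\,A|_{\mathfrak{n(s)}}$ has strictly positive real part. In particular, the nilradical has codimension one in $\mathfrak{s}$, and any $X \in \mathfrak{s}$ may be written uniquely as $X = cA + N$ with $c \in \mathbb{R}$ and $N \in \mathfrak{n(s)}$.

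Now suppose $X \in \mathfrak{s}$ is such that $\mathrm{ad}\,X$ has only purely imaginary eigenvalues. Write $X = cA + N$ as above. Since $N \in \mathfrak{n(s)}$, Lemma \ref{lemma: eigenvalues of ad X = those of ad X+N for N nilpotent} tells us that $\mathrm{ad}\,X$ and $\mathrm{ad}(cA)$ have the same eigenvalues. The eigenvalues of $\mathrm{ad}(cA)$ on $\mathfrak{s}$ consist of $0$ (coming from $A$ itself) together with $c\lambda_1, \dots, c\lambda_k$, where $\lambda_1, \dots, \lambda_k$ are the eigenvalues of $\mathrm{ad}\,A|_{\mathfrak{n(s)}}$, each of which has positive real part. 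For every $c\lambda_i$ to be purely imaginary, we must therefore have $c = 0$.

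Hence $X = N \in \mathfrak{n(s)}$, so $\mathrm{ad}\,X$ is nilpotent and all its eigenvalues equal zero, which is exactly the almost completely solvable condition. The only nontrivial ingredient is Heintze's structure result, which we are citing directly; once that is in hand, no further obstacle arises.
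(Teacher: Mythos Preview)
Your proof is correct and follows essentially the same approach as the paper's own argument: invoke Heintze's structure theorem to write $\mathfrak s = \mathbb R A \oplus \mathfrak{n(s)}$ with $\mathrm{ad}\,A$ having eigenvalues of positive real part on the nilradical, then apply Lemma~\ref{lemma: eigenvalues of ad X = those of ad X+N for N nilpotent} to conclude that any element whose adjoint has purely imaginary eigenvalues must lie in $\mathfrak{n(s)}$. Your version is slightly more explicit in tracking the coefficient $c$, but the logic is identical.
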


\begin{proof}
From \cite{HeintzeHMNC}, it is known that $\mathfrak s = \mathfrak a + \mathfrak{n(s)}$, where $\dim \mathfrak a = 1$ and $X\in\mathfrak a$ satisfies the property that the (generalized) eigenvalues of $ad~X$ have positive real part.  By the lemma above, for any $X\in\mathfrak a - \{0\}$ and $N\in\mathfrak{n(s)}$, $ad~(X+N)$ has (generalized) eigenvalues with positive real part.  Thus, the only elements $X\in\mathfrak s$ so that $ad~X$ has purely imaginary eigenvalues are the elements of the nilradical.  This proves the claim.
\end{proof}

\begin{prop}\label{prop: einstein solv is almost compl solv}
	Let $\mathfrak s$ be a solvable Lie algebra such that the corresponding simply-connected Lie group $S$ admits a left-invariant Einstein metric.  Then $\mathfrak s$ is almost completely solvable.
\end{prop}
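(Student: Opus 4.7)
The plan is to mirror the argument of Proposition~\ref{prop: neg curv alg is almost compl solv}: invoke a structure theorem for Einstein solvmanifolds to produce an orthogonal decomposition $\mathfrak{s} = \mathfrak{a} + \mathfrak{n(s)}$ in which elements of $\mathfrak{a}$ act on $\mathfrak{n(s)}$ with real eigenvalues, and then appeal to Lemma~\ref{lemma: eigenvalues of ad X = those of ad X+N for N nilpotent} to transfer the real-eigenvalue property to arbitrary $X \in \mathfrak{s}$.

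First I would invoke Lauret's standardness theorem \cite{LauretStandard}: any left-invariant Einstein metric on a simply-connected solvable Lie group is standard, i.e.\ the subspace $\mathfrak{a} := \mathfrak{n(s)}^\perp$ is an abelian subalgebra of $\mathfrak{s}$. The Ricci-flat case falls outside the scope of Lauret's theorem but is handled separately by Alekseevsky--Kimelfeld (any Ricci-flat homogeneous metric is flat), in which case $\mathfrak{s}$ reduces to an abelian algebra and the conclusion is trivial; so I focus on the Einstein constant $c<0$ case.

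Next I would use the fact, from Heber's analysis of standard Einstein solvmanifolds \cite{Heber}, that for every $A \in \mathfrak{a}$ the operator $ad~A|_{\mathfrak{n(s)}}$ is symmetric with respect to the restriction of $g$ to $\mathfrak{n(s)}$, and hence has only real eigenvalues. Since $\mathfrak{a}$ is abelian, $ad~A$ vanishes on $\mathfrak{a}$; and since $\mathfrak{n(s)}$ is an ideal, $ad~A$ is block upper triangular with respect to the decomposition $\mathfrak{s} = \mathfrak{a} + \mathfrak{n(s)}$. Consequently the eigenvalues of $ad~A$ on all of $\mathfrak{s}$ are real.

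To finish, given any $X \in \mathfrak{s}$, write $X = A + N$ with $A \in \mathfrak{a}$ and $N \in \mathfrak{n(s)}$. By Lemma~\ref{lemma: eigenvalues of ad X = those of ad X+N for N nilpotent}, the eigenvalues of $ad~X$ coincide with those of $ad~A$. If they are all purely imaginary then, being simultaneously real, they are all zero, so $ad~X$ has only the zero eigenvalue, as required. The hard part is securing the claim that $ad~A|_{\mathfrak{n(s)}}$ is symmetric for \emph{every} $A \in \mathfrak{a}$, not just for the distinguished Einstein derivation or a Cartan-type subspace; this is a known consequence of the rigidity of the standard Einstein structure, but leans on pinpointing the correct statement in the Heber--Lauret literature rather than re-deriving it here.
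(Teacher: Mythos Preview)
Your main argument for the non-flat case is correct and closely parallels the paper's proof: both invoke Lauret's structure theorem to decompose $\mathfrak{s} = \mathfrak{a} + \mathfrak{n(s)}$ with $\mathfrak{a}$ abelian and $ad~\mathfrak{a}$ well-behaved, and then apply Lemma~\ref{lemma: eigenvalues of ad X = those of ad X+N for N nilpotent} exactly as in the proof of Proposition~\ref{prop: neg curv alg is almost compl solv}. The only difference is in what is extracted from the literature: the paper cites Lauret directly for the property that $ad~\mathfrak{a}$ is fully reducible with no element having purely imaginary eigenvalues, whereas you aim for the stronger statement that every $ad~A|_{\mathfrak{n(s)}}$ is symmetric (the ``Iwasawa type'' conclusion from Heber combined with Lauret's standardness). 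Either route suffices, and you correctly flag that the symmetry claim requires pinpointing the right citation.

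Your handling of the Ricci-flat case, however, contains a genuine error. Alekseevsky--Kimel'fel'd does give that Ricci-flat implies flat, but it is \emph{not} true that a simply-connected solvable Lie group with a flat left-invariant metric has abelian Lie algebra. Milnor's classification shows that such algebras are exactly those of the form $\mathfrak{b} \ltimes \mathfrak{u}$ with both factors abelian and $ad~B|_{\mathfrak{u}}$ skew-symmetric for every $B \in \mathfrak{b}$; the universal cover of $E(2)$ is a concrete non-abelian example. Worse, in any such non-abelian example $ad~B$ has nonzero purely imaginary eigenvalues, so the algebra is \emph{not} almost completely solvable. Thus the proposition as literally stated fails in the flat case; the paper's proof (resting on \cite{LauretStandard}) and yours are both tacitly confined to the $c<0$ regime. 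You should simply exclude the flat case rather than claim it is trivially covered.
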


\begin{proof}
	By the classification results of Lauret \cite{LauretStandard}, such a solvable Lie algebra can be decomposed as $\mathfrak s = \mathfrak a + \mathfrak{n(s)}$ where $\mathfrak a$ is an abelian Lie algebra such that $ad~\mathfrak a$ is fully reducible and no element has purely imaginary eigenvalues.  The proof of our proposition is finished as in the proof of the previous proposition.
\end{proof}

\subsection{Linearizing Lie groups}
\label{sec: linearizing Lie groups}
In Section \ref{sec:proof of main theorems}, we  apply Mostow's result on maximal fully reducible subalgebras to the Lie algebra $\mathfrak g$ of the isometry group of a solvmanifold.  To use those results, we first need an almost faithful representation of the isometry group $G$ of a solvmanifold.  We denote by $Rad(G,G)$ the radical of the derived group $(G,G)$ of $G$.

\begin{prop}\label{prop: existence of almost faithful representation}
	Let $G$ be a Lie group with finitely many connected components.  Then there exists an almost faithful representation of $G$ if and only if $Rad(G,G)$ is simply-connected.
	\end{prop}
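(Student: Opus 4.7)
The plan is to handle the two directions separately; the forward implication is structural, while the reverse is a construction using Ado's theorem, and it is the construction that carries the main difficulty.

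For the forward direction, suppose $\rho: G \to GL(V)$ is almost faithful with discrete kernel, and let $R = \mathrm{Rad}(G,G)$. Then $\rho(R)$ is a connected, normal, solvable subgroup of $[\rho(G),\rho(G)]^0$, hence contained in its radical. The key structural input I would use is that for any linear Lie group $H \subset GL(V)$, the radical of $[H,H]^0$ is a connected unipotent Lie group. To see this, I would pass to the Zariski closure $\bar H$: in the algebraic group $[\bar H, \bar H]$, a reductive Levi subgroup is automatically perfect (commutators kill the abelian part of any reductive factor), so the solvable radical collapses to the unipotent radical. Unipotent real Lie groups are diffeomorphic to Euclidean space, hence simply connected, and so $\rho(R)$ sits as a connected Lie subgroup of a simply connected nilpotent group and is itself simply connected. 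The covering map $\rho|_R : R \to \rho(R)$, having discrete kernel and simply connected target, must then be a diffeomorphism, giving simple-connectedness of $R$.

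For the reverse direction, assume $R$ is simply connected. Since $[G:G_0]<\infty$, it suffices to produce an almost faithful representation of the identity component $G_0$ and then extend by the induced-representation construction. By Ado's theorem, $\mathfrak g$ admits a faithful representation $\phi: \mathfrak g \to \mathfrak{gl}(V)$, integrating to a homomorphism $\tilde\phi: \tilde{G_0} \to GL(V)$ whose kernel is necessarily a discrete central subgroup of $\tilde{G_0}$. The goal is to choose $\phi$ so that $\pi_1(G_0) \subset \ker\tilde\phi$, ensuring $\tilde\phi$ factors through $G_0$. The hypothesis on $R$ enters here: simple-connectedness of $R$ means $R$ lifts isomorphically into $\tilde{G_0}$ as a subgroup meeting $\pi_1(G_0)\subset Z(\tilde{G_0})$ trivially, so $\pi_1(G_0)$ lies entirely in (the lift of) a semisimple Levi complement, inside the center of a semisimple Lie group. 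The representation theory of semisimple Lie algebras is then rich enough to realize any prescribed central kernel via an appropriate twist or direct sum, allowing modification of $\phi$ to annihilate $\pi_1(G_0)$ while retaining faithfulness on $\mathfrak g$.

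The main obstacle is this modification step in the reverse direction: arranging an Ado-type representation to kill a prescribed discrete central subgroup while remaining faithful on the Lie algebra. The reduction supplied by simple-connectedness of $R$ is what makes the obstacle tractable, converting it into a purely semisimple question where the center of the universal cover is a finitely generated abelian group and one can select irreducible summands realizing any desired central quotient. Once the construction is settled for $G_0$, the passage from $G_0$ to $G$ via induced representations is routine and preserves the discreteness of the kernel.
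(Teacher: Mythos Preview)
Your forward direction is essentially correct and parallels the paper's argument, though the paper takes a slightly shorter route: it simply observes that $\operatorname{Rad}(G,G)$ is nilpotent (a standard fact) and that $\rho(\operatorname{Rad}(G,G))$, being a connected linear group whose Lie algebra consists of nilpotent endomorphisms (since it lies in $[\mathfrak g,\mathfrak g]\cap\operatorname{rad}(\mathfrak g)$), is conjugate into the strict upper-triangular matrices and hence simply connected. Your detour through the Zariski closure reaches the same conclusion but is unnecessary.

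The reverse direction, however, has a genuine gap. Your key inference---that because $R=\operatorname{Rad}(G,G)$ lifts isomorphically and meets $\pi_1(G_0)$ trivially, $\pi_1(G_0)$ must lie in a semisimple Levi complement---is false. The group $R$ is the radical of the \emph{commutator} $(G,G)$, not the radical of $G$; there is plenty of room for $\pi_1(G_0)$ to sit inside $\operatorname{Rad}(\widetilde{G_0})$ while still avoiding $R$. Concretely, take $G_0=S^1\times\mathbb{R}$: here $\operatorname{Rad}(G,G)=\{e\}$ is trivially simply connected, yet $\pi_1(G_0)=\mathbb{Z}$ lives entirely in the (abelian) radical, and the Levi factor is trivial. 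Your proposed mechanism for killing $\pi_1(G_0)$ by adjusting a semisimple representation has nothing to act on in this case, even though $G_0$ visibly has a faithful representation.

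The paper avoids this by not attempting to reprove the result from Ado's theorem. Instead it invokes Goto's theorem (a connected Lie group is linearizable if and only if a Levi factor is linearizable and $\operatorname{Rad}(G,G)$ is simply connected) as a black box, so the solvable part is handled once and for all. The paper's remaining work is to pass from a faithful representation to an almost faithful one: it constructs a discrete central $Z$ in the Levi factor $L$, contained in the kernel of the conjugation action $L\to\operatorname{Aut}(\operatorname{Rad}(G))$, such that $L/Z$ is linearizable; then $H=(L/Z)\cdot\operatorname{Rad}(G)$ has the same $\operatorname{Rad}(H,H)=\operatorname{Rad}(G,G)$ and a linearizable Levi factor, so Goto applies to $H$ and yields an almost faithful representation of $G$. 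If you want a self-contained argument, you would need to supply a correct proof of Goto's theorem, which requires treating the radical directly rather than pushing the fundamental group into the semisimple part.
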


We begin by showing the existence of an almost-faithful representation when $Rad(G,G)$ is simply-connected.  As we will demonstrate, in the case that the Lie group is connected, the proof of this result follows immeadiately from two classical results of Malcev and Goto on the linearization of Lie groups, \cite{Malcev:OnLinearLieGroups,
Goto:FaithfulRepresentationsOfLieGroupsII}.  The reduction to the case that $G$ is connected is Proposition 5.3 of \cite{GorbatsevichOnishchikVinberg:LieGroupsAndLieAlgebras3}.

\begin{thm}[Malcev, 1943] Let $G$ be a connected Lie group with Levi factor $L$ and radical $Rad(G)$.  Then $G$ is linearlizable if and only if $L$ and $Rad(G)$ are so.
\end{thm}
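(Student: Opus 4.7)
The plan is to handle the two directions separately. Necessity is immediate: if $\rho\colon G \to GL(V)$ is faithful, the restrictions $\rho|_L$ and $\rho|_{Rad(G)}$ give faithful representations of the Levi factor and of the radical. The substance of the theorem lies entirely in sufficiency.

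For sufficiency, suppose $\sigma\colon L \to GL(U)$ and $\rho\colon R \to GL(V)$ are faithful, where $R = Rad(G)$. First I would record the group-level Levi decomposition $G = L \cdot R$ and note that $L \cap R$ is a closed normal solvable subgroup of the semisimple group $L$, hence discrete and central in $L$. The goal is then to combine $\sigma$ and $\rho$ into a single faithful representation of $G$, respecting the conjugation action of $L$ on $R$ and matching on the overlap $L \cap R$.

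The central technical step is to replace $\rho$ by a faithful representation $\tilde\rho\colon R \to GL(W)$ whose image is Zariski closed, and such that the conjugation action of $L$ on $R$ extends to a homomorphism $\tau\colon L \to GL(W)$ normalizing $\tilde\rho(R)$. Existence of such $\tilde\rho$ and $\tau$ rests on the fact that $L$ acts on $\mathfrak{r}$ by derivations; this action is algebraic, so after passing to a sufficiently large faithful representation one can realize the induced action as conjugation by elements of the ambient $GL(W)$. With this in hand, one forms a representation $G \to GL(U \otimes W)$ along the lines of $l \cdot r \mapsto \sigma(l) \otimes \tau(l)\tilde\rho(r)$, which is a homomorphism precisely because $\tau$ implements the conjugation action of $L$ on $\tilde\rho(R)$.

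The main obstacle will be ensuring well-definedness and faithfulness on the overlap $L \cap R$. For $x \in L \cap R$, the evaluations of $\sigma(x)$ and $\tau(x)\tilde\rho(x)$ must agree in $GL(U \otimes W)$, and the combined representation must remain injective on the (discrete) central intersection so that no element of $\ker\sigma \cap \ker\tilde\rho$ sneaks in. Arranging this bookkeeping---so that $\sigma$, $\rho$ and $\tau$ can all be adjusted in a compatible way without destroying faithfulness of any of them---is the delicate heart of Malcev's 1943 argument.
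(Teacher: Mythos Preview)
The paper does not prove this theorem at all: it is simply quoted as a classical result of Malcev, with a citation to \cite{Malcev:OnLinearLieGroups}, and then used (together with Goto's theorem) as a black box in the proof of Proposition~\ref{prop: existence of almost faithful representation}. There is therefore nothing in the paper to compare your proposal against.

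As for the proposal itself, the outline is in the right spirit---restriction for necessity, and for sufficiency building a representation of $G$ from faithful representations of $L$ and $R$ by making the $L$-action on $R$ linear---and your tensor formula $l\cdot r \mapsto \sigma(l)\otimes \tau(l)\tilde\rho(r)$ does define a homomorphism once $\tau$ implements conjugation. You correctly flag the genuine difficulty: well-definedness and injectivity on the discrete central overlap $L\cap R$. What you have written is an honest sketch rather than a proof, since the ``central technical step'' (producing $\tilde\rho$ with Zariski-closed image and a compatible $\tau$) and the ``bookkeeping'' on $L\cap R$ are asserted rather than carried out; these are exactly the places where Malcev's original argument does real work. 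If you want to complete this, you will need to be explicit about how to enlarge $\rho$ so that $Aut(R)$, or at least the image of $L$ in it, is realized by conjugation inside $GL(W)$, and then verify that the resulting map on $G=(L\times R)/\{(x,x^{-1}):x\in L\cap R\}$ is both well defined and injective.
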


\begin{thm}[Goto, 1950]\label{thm: Goto}  A connected Lie group $G$ is linearizable  if and only if a Levi factor is linearizable and $Rad(G,G)$ is simply-connected.
\end{thm}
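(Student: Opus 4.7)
The plan is to prove the two implications separately. The forward direction is essentially structural: a linearizable group inherits linearizability on any closed subgroup, and the derived group of a linear group has unipotent (hence simply-connected) radical. The backward direction is genuinely a construction, and it proceeds by invoking Malcev's theorem (stated just above) to reduce linearizability of $G$ to that of the radical, and then applying Malcev's criterion for solvable groups.

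For the forward direction, suppose $\rho : G \to GL(V)$ is almost faithful. Its restriction to a Levi factor $L$ is again almost faithful, so $L$ is linearizable. For the other condition, I would first show on the Lie algebra level that the radical of $[\mathfrak g, \mathfrak g]$ is nilpotent. Writing $\mathfrak g = \mathfrak l \oplus \mathfrak{rad}(\mathfrak g)$ and using $[\mathfrak l, \mathfrak l] = \mathfrak l$, one computes
$$[\mathfrak g, \mathfrak g] \cap \mathfrak{rad}(\mathfrak g) \;=\; [\mathfrak l, \mathfrak{rad}(\mathfrak g)] + [\mathfrak{rad}(\mathfrak g), \mathfrak{rad}(\mathfrak g)] \;\subset\; \mathfrak n(\mathfrak g),$$
since for any Lie algebra $[\mathfrak g, \mathfrak{rad}(\mathfrak g)]$ lies in the nilradical. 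Passing to the Zariski closure of $\rho(G)$, the radical of the algebraic derived group coincides with its unipotent radical, so $Rad(G,G)$ embeds into a unipotent algebraic group. Since unipotent Lie groups are diffeomorphic (via $\exp$) to their Lie algebras, they are simply-connected, and $Rad(G,G)$ is in turn simply-connected as a closed connected subgroup.

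For the backward direction, assume $L$ is linearizable and $Rad(G,G)$ is simply-connected. By Malcev's theorem it suffices to show $Rad(G)$ is linearizable. Here I would appeal to Malcev's earlier criterion for solvable groups: a connected solvable Lie group $R$ is linearizable if and only if its derived group $(R,R)$ is simply-connected. So I need to establish that $(Rad(G), Rad(G))$ is simply-connected. This group is characteristic in $Rad(G)$, hence normal and solvable in $G$, and is a closed connected subgroup of $(G,G)$, hence contained in $Rad(G,G)$. The Lie algebra calculation from the forward direction shows $Rad(G,G)$ is nilpotent unconditionally (no linearity used), so it is a simply-connected nilpotent Lie group in which every closed connected subgroup is simply-connected; applying this to $(Rad(G), Rad(G))$ concludes the argument.

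The main obstacle I anticipate is the forward-direction passage from Lie-theoretic data to algebraic-group data needed to identify $Rad(G,G)$ with a subgroup of a unipotent algebraic group; in particular, it requires knowing that in the Zariski closure the radical of the derived group is unipotent, a fact which is standard for algebraic groups but requires care when transferring back to the Lie group $\rho(G)$. The backward direction, by contrast, is essentially a bookkeeping exercise once one has Malcev's solvable linearizability criterion on hand, which itself is a nontrivial classical theorem I would quote rather than reprove.
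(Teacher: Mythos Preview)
The paper does not prove this theorem; it is stated with attribution and citation to Goto (1950) and then used as a black box in the proof of Proposition~\ref{prop: existence of almost faithful representation}. So there is no proof in the paper to compare your proposal against directly.

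Your argument is correct. Two comments on how it relates to what the paper does contain. For the forward implication, the paper (in proving the reverse direction of Proposition~\ref{prop: existence of almost faithful representation}) gives a more elementary route than your Zariski-closure argument: since the Lie algebra of $Rad(G,G)$ is $[\mathfrak g,\mathfrak{rad}(\mathfrak g)]$, every element of it acts nilpotently in \emph{any} finite-dimensional representation (a standard consequence of Lie's theorem applied to the radical). Hence in a faithful representation the image of $Rad(G,G)$ consists of unipotent matrices, and such a connected group is simply-connected because $\exp$ is a global diffeomorphism onto it. This bypasses the passage to algebraic groups that you correctly flagged as the delicate step. For the backward implication, your reduction via Malcev's 1943 theorem to the solvable case, followed by Malcev's solvable criterion, is exactly the standard derivation; the paper has nothing to add here beyond quoting the result.
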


Consider a Levi decomposition $G=L \ Rad(G)$.  To build an almost faithful representation of $G$, we start by replacing $L$ with a suitable quotient.  Denote by $\psi : L \to Aut (Rad(G))$ the restriction of conjugation by $L$ in $G$ to the radical.

\begin{lemma} There exists a discrete central subgroup $Z$ of $L$ such that 
	\begin{enumerate}
	\item $Z < Ker \ \psi$
	\item $L/Z$ is linearizable
\end{enumerate}	 
\end{lemma}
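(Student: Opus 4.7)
The plan is to realize $L/Z$ as a linear group via a single natural representation assembled from the adjoint action and $\psi$. Specifically, consider the Lie group homomorphism
\[
	\Phi := Ad_L \times \psi : L \longrightarrow GL(\mathfrak{l}) \times Aut(Rad(G)),
\]
where $Ad_L$ is the adjoint representation of $L$ on its own Lie algebra. Since $Rad(G)$ is a connected Lie group, its automorphism group embeds faithfully into $GL(\mathfrak{r})$ (where $\mathfrak{r}$ denotes the Lie algebra of $Rad(G)$) by differentiation at the identity, so post-composing yields a homomorphism $L \to GL(\mathfrak{l} \oplus \mathfrak{r})$. I would set $Z := Ker(\Phi)$ and verify the four required properties in turn.

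First, discreteness: the Lie algebra of $Z$ equals $\mathfrak{z}(\mathfrak{l}) \cap Ker(d\psi)$, which vanishes because $\mathfrak{l}$ is semisimple, so $Z$ is zero-dimensional and hence discrete. Second, $Z \subset Ker\,\psi$ is built into the definition of $\Phi$. Third, the relation $Ad_L(z) = Id$ for $z \in Z$ forces $z$ to centralize the identity component $L^0$, which in the connected case yields $Z \subset Z(L)$ immediately. Fourth, the linearizability of $L/Z$ is exhibited by the induced injective Lie group homomorphism $L/Z \hookrightarrow GL(\mathfrak{l} \oplus \mathfrak{r})$, which is a faithful finite-dimensional representation.

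The point I expect to require the most care is the case of disconnected $L$ (permitted since the proposition only assumes that $G$ has finitely many components). There a kernel element of $\Phi$ lies a priori in the centralizer $Z_L(L^0)$ rather than in $Z(L)$ itself. To remedy this, I would enlarge $\Phi$ by an extra factor: a faithful finite-dimensional representation of the finite quotient $L/L^0$, which exists because every finite group is linear. After this enlargement, elements of $Ker(\Phi)$ must also act trivially on the component group, hence are genuinely central in $L$, while the verification of the other properties is unchanged.
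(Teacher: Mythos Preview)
Your core argument is correct and in fact cleaner than the paper's. The paper proceeds by decomposing $L$ into its simple factors $L = L_1\cdots L_k$, grouping them as $L^1 = L_1\cdots L_i$ (on which $\psi$ is locally injective) and $L^2 = L_{i+1}\cdots L_k \subset Ker\,\psi$, and then setting $Z = Ker(\psi|_{L^1})\cdot Z(L^2)$; linearizability of $L/Z$ is obtained by combining $\psi$ on the first factor with the adjoint representation on the second. Your single map $\Phi = Ad_L \times \psi$ does the same job without the factor decomposition: since $L$ is connected semisimple, $Ker(Ad_L) = Z(L)$ is discrete and central, so $Z := Ker\,\Phi = Z(L)\cap Ker\,\psi$ immediately has all the required properties, and the induced injection $L/Z \hookrightarrow GL(\mathfrak l)\times Aut(\mathfrak r)$ is the desired faithful representation. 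This is a genuinely more economical route.

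Two remarks. First, the disconnected case is not actually needed here: the paper has already reduced the proposition to connected $G$ (citing Proposition~5.3 of \cite{GorbatsevichOnishchikVinberg:LieGroupsAndLieAlgebras3}), so the Levi factor $L$ is connected. Second, your proposed fix for disconnected $L$ does not quite close the gap you identify: enlarging $\Phi$ by a faithful representation of $L/L^0$ forces $z\in Ker\,\Phi$ to lie in $L^0$ and to centralize $L^0$, but an element of $Z(L^0)$ need not commute with components of $L$ outside $L^0$, so you have not yet shown $z\in Z(L)$. One would instead want to add the conjugation action of $L$ on (say) the group algebra of $L/L^0$, or simply observe that the issue is moot in the paper's setting.
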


\begin{proof}
The image $\psi (L) < Aut (Rad (G)) < Aut(Rad(\mathfrak g))$ is a semi-simple subgroup of a linear group.  
As $Ker~\psi$ is a normal subgroup of $L$, we may decompose $L = L_1 \dots L_k$ as a product of simple factors  so such that $\psi(L)$ is locally isomorphic to $L^1:=L_1\dots L_i$, with $L^2:=L_{i+1}\dots L_k < Ker \ \psi$.  

Denote  the center of $L^2$ by $Z(L^2)$.  Using $Z = Ker \ (\psi|_{L^1}) \cdot Z(L^2)$, we have the desired result. 
\end{proof}

\begin{remark} One does not know  if $Z$ is the full center of $L$.  However, it is a finite index subgroup.
\end{remark}

To find an almost faithful representation of $G$, it suffices to find a faithful representation of $H=L/Z \cdot Rad(G)$.  By our choice of $Z$ above, we see that $Rad(H,H)=Rad(G,G)$.  And so, by the theorem of Goto above, $H$ is linearizable precisely when $Rad(G,G)$ is simply-connected.  This proves one direction of our proposition.

To finish, we prove the reverse implication.  Let $\psi$ be an almost faithful representation of $G$.  Observe that $\psi (Rad(G,G)) = Rad(\psi (G),\psi (G))$.  
Furthermore, observe that linear nilpotent Lie groups must always be simply-connected, as they are conjugate to upper triangular matrix groups (Lie's theorem).  As $\psi$ has discrete kernel, we see that the quotient $\psi (Rad(G,G))$ of $Rad(G,G)$ being simply-connected implies that $Rad(G,G)$ is simply-connected.

\subsection{Representations of semi-simple groups}
Let $G$ be a semi-simple group of non-compact type and $\rho$ an almost faithful representation.  Consider an Iwasawa decomposition $G=KAN$ and $M = N_K(A)$. 

\begin{lemma}\label{lemma: rho M and A} $\rho(M)$ is compact and $\rho(A)$ is fully reducible and diffeomorphic to some euclidean space.
\end{lemma}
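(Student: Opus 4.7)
The plan splits into two essentially independent pieces: handling $\rho(M)$ and handling $\rho(A)$. I will treat the compactness of $\rho(M)$ first, since it is nearly immediate, and then devote the bulk of the argument to the structure of $\rho(A)$.

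For $\rho(M)$, I would simply observe that $M = N_K(A)$ is a closed subgroup of $K$ (normalizers are always closed), and $K$ is compact. Hence $M$ is compact, and so its continuous image $\rho(M)$ is compact. No use of the semisimple structure or of the almost-faithfulness of $\rho$ is needed here.

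For $\rho(A)$, the key fact is that every $X \in \mathfrak{a}$ is $\mathbb{R}$-split: by the definition of the Iwasawa decomposition, $\operatorname{ad} X$ is diagonalizable over $\mathbb{R}$ on $\mathfrak{g}$. Because $\mathfrak{g}$ is semisimple, Jordan--Chevalley in $\mathfrak{g}$ is preserved by any finite-dimensional representation, so $d\rho(X)$ is a semisimple endomorphism of the representation space; its eigenvalues remain real (they lie in the set of weights evaluated on $X$, and these are real since $X$ is $\mathbb{R}$-split). Since $\mathfrak{a}$ is abelian, $d\rho(\mathfrak{a})$ is simultaneously diagonalizable over $\mathbb{R}$, so after a real change of basis $\rho(A)$ sits inside the group of positive diagonal matrices. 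A subgroup of the positive diagonal group is automatically fully reducible (every invariant subspace decomposes into joint eigenspaces), giving the first half of the claim about $\rho(A)$.

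To show $\rho(A)$ is diffeomorphic to a Euclidean space, I would argue that $\rho|_A$ is actually injective, so $\rho(A) \cong A \cong \mathbb{R}^{\dim \mathfrak{a}}$. Since $\rho$ is almost faithful, $\ker \rho$ is discrete; being a discrete normal subgroup of a connected Lie group it lies in the center $Z(G)$. The remaining step, which is the main algebraic obstacle, is to see that $Z(G) \cap A = \{e\}$: any central element commutes with a regular element of $A$ and with all of $N$, and in the Iwasawa decomposition $G = KAN$ the centralizer of a regular element of $A$ meets $N$ trivially, forcing $Z(G) \subset M \subset K$; the diffeomorphism $K \times A \times N \to G$ then yields $Z(G) \cap A \subset K \cap A = \{e\}$. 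Hence $\ker \rho \cap A = \{e\}$, $\rho|_A$ is injective, and $\rho(A)$ is a connected abelian Lie group diffeomorphic to $A \cong \mathbb{R}^{\dim \mathfrak{a}}$, as desired. The one place care is needed is the identification $Z(G) \subset K$, which is standard for connected semisimple $G$ but worth noting explicitly.
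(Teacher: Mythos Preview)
Your argument for $\rho(M)$ contains a genuine gap: you assume that $K$ itself is compact, but this is precisely what need not hold in the setting of the lemma. The group $G$ is only assumed to be semisimple of non-compact type with an \emph{almost} faithful representation $\rho$; if $G$ were linear one could take $\rho$ faithful and there would be nothing to prove. The relevant case is exactly when $G$ is a non-linear cover such as $\widetilde{SL_2(\mathbb{R})}$, where $K \cong \mathbb{R}$ is non-compact (see the paper's own Example~\ref{ex: bad metric on H^2 x R} and the remark in the proof of Proposition~\ref{prop: maximal compact of non-compact semi-simple is in stabilizer}). So the step ``$K$ is compact, hence $M$ is compact'' fails, and the compactness of $\rho(M)$ genuinely requires passing through $\rho$.

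The paper's remedy, which also streamlines your treatment of $\rho(A)$, is to note that $\rho(K)\rho(A)\rho(N)$ is an Iwasawa decomposition of the \emph{linear} semisimple group $\rho(G)$: since $d\rho$ is a Lie algebra isomorphism, it carries the Iwasawa decomposition of $\mathfrak g$ to one of $\rho(\mathfrak g)$, and the connected subgroups match. For a linear semisimple group the Iwasawa $K$-factor is compact, so $\rho(K)$ (and hence its closed subgroup $\rho(M)$) is compact; likewise $\rho(A)$, being the $A$-factor, is fully reducible and diffeomorphic to a Euclidean space. Your direct argument for $\rho(A)$ via real weights and $Z(G) \subset K$ is correct and more hands-on, but once you have the Iwasawa decomposition of $\rho(G)$ both conclusions drop out at once.
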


This lemma follows immediately from the fact that $\rho(K)\rho(A)\rho(N)$ is an Iwasawa decomposition of the linear semi-simple group $\rho(G)$.

\section{Prerequisites for isometry group of solvmanifold}
\label{sec:prereq isom grp solvmfld}

To study the isometry groups of the solvmanifolds of interest in this work, we make use of the very general results obtained by Gordon and Wilson \cite{GordonWilson:IsomGrpsOfRiemSolv}.  For ease of referencing through out our work, we record many of the needed results in this section.

The main result of that work is the  construction of a solvable group in so-called standard position.  This special solvable structure is unique, up to conjugation within the isometry group, and most impressively can be recovered from any other solvable structure via an algorithm that only utilizes metric Lie algebra data.  We present these results below and restrict ourselves to the simply-connected setting for simplicity.

Let $\{S,g\}$ be a simply-connected solvable Lie group with left-invariant metric.  Denote the connected isometry group by $G=Isom(S,g)_0$ and its Lie algebra by $\mathfrak g$.  Through out this section, and the paper as a whole, we will maintain consistent notation with Gordon-Wilson; for explanation of notation, we direct the interested reader to that work.

The following is from Definition 2.2 and Proposition 2.4 of \cite{GordonWilson:IsomGrpsOfRiemSolv}.
\begin{defin}\label{def: normal modification}  Let $\mathfrak s$ be a solvable subalgebra of $\mathfrak g$.  A normal modification of $\mathfrak s$ is a (solvable) subalgebra of $\mathfrak g$ of the form $(Id + \phi) (\mathfrak s)$ where $Id$ is the identity map on $\mathfrak s$ and
	\begin{enumerate}
	\item $\phi : \mathfrak s \to \mathfrak g$ is a linear map
	\item $\phi(\mathfrak s)$ is abelian and contained in a compactly embedded subalgebra $\mathfrak l \subset \mathfrak g$ such that $\mathfrak l \cap \mathfrak s = \{0\}$
	\item $\phi(\mathfrak s) \subset N_\mathfrak g(\mathfrak s)$, the normalizer of $\mathfrak s$ in $\mathfrak g$
	\item $[\mathfrak s,\mathfrak s] \subset Ker~\phi$	
	\end{enumerate}
\end{defin}

\begin{defin}\label{def: std. mod.} Let $\{\mathfrak s,g\}$ be a metric solvable Lie algebra.  Denote the algebra of skew-symmetric derivations of $\mathfrak s$ (relative to $g$) by $N_\mathfrak l(\mathfrak s)$.  The standard modification $\mathfrak s'$ of $\mathfrak s$ is defined to be the orthogonal complement of $N_\mathfrak l(\mathfrak s)$ in $\mathfrak f = N_\mathfrak l(\mathfrak s) \ltimes \mathfrak s$ relative to the Killing form $B_\mathfrak f$ of $\mathfrak f$.
\end{defin}

The following proposition records some useful properties of the standard modification.

\begin{prop}\label{prop: properties of std. mod.}   Consider a metric solvable Lie algebra $\{\mathfrak s,g\}$ with standard modification $\mathfrak s'$.
\begin{enumerate}
	\item $\mathfrak s'$ is a solvable ideal of $N_\mathfrak l (\mathfrak s) \ltimes \mathfrak s$,
	\item The standard modification is a normal modification.
\end{enumerate}
\end{prop}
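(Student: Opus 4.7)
The plan is to use the Killing form $B_\mathfrak f$ to present $\mathfrak s'$ explicitly as the graph of a linear map $\phi : \mathfrak s \to N_\mathfrak l(\mathfrak s)$ and verify both claims from this description. The key first step is that for $T \in N_\mathfrak l(\mathfrak s)$, a direct calculation with the semidirect-product structure of $\mathfrak f = N_\mathfrak l(\mathfrak s) \ltimes \mathfrak s$ yields
\[
B_\mathfrak f(T,T) \;=\; B_{N_\mathfrak l(\mathfrak s)}(T,T) \;+\; \mathrm{tr}_\mathfrak s(T^2),
\]
where the first summand is nonpositive (since $N_\mathfrak l(\mathfrak s)$ is compactly embedded) and the second equals $-\|T\|_g^2$ (since $T$ is skew-symmetric on $(\mathfrak s,g)$). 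Hence $B_\mathfrak f$ is negative definite on $N_\mathfrak l(\mathfrak s)$, giving the direct-sum decomposition $\mathfrak f = N_\mathfrak l(\mathfrak s) \oplus \mathfrak s'$ and writing $\mathfrak s' = (Id+\phi)(\mathfrak s)$ for a uniquely determined linear map $\phi$.

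For (i), I would show that the projection $\pi : \mathfrak f \to N_\mathfrak l(\mathfrak s)$ along $\mathfrak s'$ is a Lie algebra homomorphism; then $\mathfrak s' = \ker\pi$ is automatically an ideal. Since $\pi|_{N_\mathfrak l(\mathfrak s)} = \mathrm{Id}$ and $\pi|_\mathfrak s = -\phi$, the homomorphism check on the three bracket types $[T_1,T_2]$, $[T,W]$, $[W_1,W_2]$ reduces to two cocycle identities for $\phi$:
\[
\phi(TW) = [T,\phi W] \quad (T \in N_\mathfrak l(\mathfrak s),\, W \in \mathfrak s), \qquad \phi([W_1,W_2]) = -[\phi W_1,\phi W_2] \quad (W_i \in \mathfrak s).
\]
Both are derived by pairing with an arbitrary $T' \in N_\mathfrak l(\mathfrak s)$ and combining Ad-invariance of $B_\mathfrak f$, the defining relation $B_\mathfrak f(W+\phi W, T') = 0$, the derivation identity $[T,\mathrm{ad}_\mathfrak s(W)] = \mathrm{ad}_\mathfrak s(TW)$, and cyclicity of trace. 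Solvability of $\mathfrak s'$ then follows from the graph description together with condition (iv) below.

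For (ii), the four conditions of Definition \ref{def: normal modification} are verified in turn. Condition (i) is immediate from the graph description; (iii) holds because elements of $N_\mathfrak l(\mathfrak s)$ are derivations and hence normalize $\mathfrak s$; and the compact-embedding part of (ii) is built into the definition of $N_\mathfrak l(\mathfrak s)$. The substantive content is (iv), $\phi|_{[\mathfrak s,\mathfrak s]} = 0$, which (via the definition of $\phi$) is equivalent to $B_\mathfrak f([\mathfrak s,\mathfrak s], N_\mathfrak l(\mathfrak s)) = 0$. Using Ad-invariance and the identity $B_\mathfrak f|_{\mathfrak s\times\mathfrak s} = B_\mathfrak s$, this reduces to $B_\mathfrak s(W_2, TW_1) = 0$ for all $W_i \in \mathfrak s$ and $T \in N_\mathfrak l(\mathfrak s)$. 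I would establish this by combining (a) the fact that $\{e^{tT}\}$ lives in the compact group $\mathrm{Aut}(\mathfrak s)\cap O(g)$, forcing purely imaginary eigenvalues for $T$, (b) the orthogonal splitting $\mathfrak s = \mathfrak{n(s)}^{\perp_g} \oplus \mathfrak{n(s)}$ on which $T$ is block-diagonal, and (c) Cartan's criterion placing $\mathfrak{n(s)}$ in the Killing radical of $B_\mathfrak s$, from which one deduces $T(\mathfrak s) \subset \mathfrak{n(s)}$. With (iv) in hand, the second cocycle identity forces $[\phi W_1,\phi W_2] = -\phi([W_1,W_2]) = 0$, supplying the abelianness required in (ii).

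The main obstacle is the detailed execution of (iv)---tracing the metric skew-symmetry of $T$ through the solvable structure of $\mathfrak s$ to deduce that $T$ sends $\mathfrak s$ into its nilradical. The remaining ingredients---nondegeneracy of $B_\mathfrak f$ on $N_\mathfrak l(\mathfrak s)$, the cocycle identities, and the assembly of the projection into a homomorphism---are essentially formal manipulations of the Killing form once the graph description of $\mathfrak s'$ is in place.
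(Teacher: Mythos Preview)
The paper does not prove this proposition at all; it simply cites Gordon--Wilson \cite{GordonWilson:IsomGrpsOfRiemSolv} for both parts.  Your proposal is therefore a genuinely different (and much more self-contained) approach, and the overall architecture---nondegeneracy of $B_\mathfrak f$ on $N_\mathfrak l(\mathfrak s)$, the graph description $\mathfrak s' = (Id+\phi)(\mathfrak s)$, and reduction of the ideal property to cocycle identities for $\phi$---is sound.

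There is, however, a real gap in your treatment of identity (b), $\phi([W_1,W_2]) = -[\phi W_1,\phi W_2]$.  Pairing with $T'\in N_\mathfrak l(\mathfrak s)$ and using only Ad-invariance, the defining relation, and the derivation identity, one computes
\[
B_\mathfrak f\bigl(\phi([W_1,W_2]) + [\phi W_1,\phi W_2],\,T'\bigr)
\;=\; B_\mathfrak f\bigl(\phi(W_2)(W_1) - [W_1,W_2],\,T'\bigr),
\]
and the right-hand side does \emph{not} vanish from those tools alone.  Equivalently, applying your identity (a) with $T=\phi W_2$ reduces (b) to $\phi|_{\mathfrak{n(s)}}=0$, i.e.\ to $B_\mathfrak f(\mathfrak{n(s)},N_\mathfrak l(\mathfrak s))=0$.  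This is strictly stronger than your condition (iv), which only gives $\phi|_{[\mathfrak s,\mathfrak s]}=0$, and it concerns $B_\mathfrak f$, not $B_\mathfrak s$.  The fix is short: $\mathfrak{n(s)}$ is a nilpotent ideal of $\mathfrak f = N_\mathfrak l(\mathfrak s)\ltimes\mathfrak s$ (it is characteristic in $\mathfrak s$, hence preserved by derivations), and any nilpotent ideal lies in the radical of the Killing form.  Once you have $\phi|_{\mathfrak{n(s)}}=0$, both (b) and (iv) follow immediately, and the rest of your argument goes through.

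A minor point: your route to $T(\mathfrak s)\subset\mathfrak{n(s)}$ via purely imaginary eigenvalues and the block decomposition is murky as stated.  This inclusion is just the standard fact that every derivation of a solvable Lie algebra takes values in the nilradical (Lemma~\ref{lemma: derivation of solvabe goes to nilradical} in the paper, i.e.\ Gordon--Wilson, Lemma~2.6); citing it directly would streamline your argument for (iv).
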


\begin{proof}  For (i), see the remarks after Def.~3.4 of \cite{GordonWilson:IsomGrpsOfRiemSolv}.  For (ii), see the remarks following Definition 3.4, Definition 2.2 (iv), and  Proposition 2.4 (d), loc.~cit.
\end{proof}

\begin{defin} Let $\{\mathfrak s,g\}$ be a metric solvable Lie algebra with isometry algebra $\mathfrak g$ (i.e. the Lie algebra of the isometry group of $\{S,g\}$).  We say an algebra $\mathfrak s$ is in standard position if it equals its own standard modification $\mathfrak s'$.
\end{defin}

This is not the definition given in Section 1 of \cite{GordonWilson:IsomGrpsOfRiemSolv}, but it is easier to digest for the unfamiliar reader and  is equivalent by Corollary 3.7, loc.~cit.  From Section 3 of that work,  we have the following.

\begin{thm}\label{thm: GW - standard position}  Let $\mathfrak g$ be the isometry algebra of a solvmanifold with metric Lie algebra $\{\mathfrak s,g\}$.  
\begin{enumerate}
\item There exists a solvable subalgebra $\mathfrak s'' \subset \mathfrak g$ which is in standard position,
\item Up to conjugacy within $\mathfrak g$, there is only one solvable subalgebra in standard position,
\item Further, $\mathfrak s''$ may be obtained from $\mathfrak s$ by applying two standard modifications, successively.
\end{enumerate}
\end{thm}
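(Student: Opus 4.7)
My plan is to prove (i) and (iii) together by an iterative fixed-point argument for the standard-modification operation $\mathfrak t \mapsto \mathfrak t'$, and to deduce (ii) from a uniqueness property of normal modifications combined with the Killing-form characterization. Concretely for (i) and (iii): starting from $\mathfrak s \subset \mathfrak g$, form $\mathfrak s'$ by Definition \ref{def: std. mod.}; since $\mathfrak s'$ is a normal modification of $\mathfrak s$ by Proposition \ref{prop: properties of std. mod.}, it again acts simply transitively on $\{S,g\}$ with its induced left-invariant metric, so we may legitimately form $\mathfrak s'' := (\mathfrak s')'$. The target is to show $(\mathfrak s'')' = \mathfrak s''$. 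The key intermediate step is to track how the extended algebra $\mathfrak f_\mathfrak t := N_\mathfrak l(\mathfrak t) \ltimes \mathfrak t$ transforms under a normal modification $\tilde{\mathfrak t} = (Id+\phi)(\mathfrak t)$: because $\phi(\mathfrak t)$ is abelian and lies in a compactly embedded subalgebra complementary to $\mathfrak t$ (Definition \ref{def: normal modification}(ii)–(iv)), the map $Id+\phi$ should extend to a Lie-algebra embedding carrying $\mathfrak f_\mathfrak t$ into $\mathfrak f_{\tilde{\mathfrak t}}$ and pulling back Killing forms. Iterating this, after one passage through the operation the extended algebra freezes, $\mathfrak f_{\mathfrak s''} = \mathfrak f_{\mathfrak s'}$, at which point the $B_{\mathfrak f}$-orthogonal complement of $N_\mathfrak l(\mathfrak s'')$ in $\mathfrak f_{\mathfrak s''}$ coincides with $\mathfrak s''$ itself, giving (i) and (iii) together.

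For (ii), let $\mathfrak s_1, \mathfrak s_2$ be two solvable subalgebras of $\mathfrak g$ in standard position. Both act simply transitively on the underlying solvmanifold, so by the general theory of transitive solvable actions — any two such subalgebras are related, up to conjugation in $G$, by a normal modification — there exist $a \in G$ and a modification map $\phi$ with $\mathrm{Ad}(a)\mathfrak s_1 = (Id+\phi)(\mathfrak s_2)$. The orthogonality that defines standard position then pins down $\phi$: both $\mathrm{Ad}(a)\mathfrak s_1$ and $\mathfrak s_2$ arise as the $B_{\mathfrak f}$-orthogonal complement of $N_\mathfrak l(\mathfrak s_2)$ inside $\mathfrak f_{\mathfrak s_2}$, and non-degenerate pairing of $\phi(\mathfrak s_2) \subset N_\mathfrak l(\mathfrak s_2)$ against the solvable complement under $B_\mathfrak f$ forces $\phi = 0$ modulo conjugation by the compact group $\exp N_\mathfrak l(\mathfrak s_2)$ (which preserves standard position). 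Hence $\mathfrak s_1$ and $\mathfrak s_2$ are conjugate within $G$.

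The main obstacle I foresee is the stabilization step for $\mathfrak f_{\mathfrak s'}$ — verifying that a single further iteration truly freezes the procedure. This requires controlling the new skew-symmetric derivations $N_\mathfrak l(\mathfrak s')$ relative to the original $N_\mathfrak l(\mathfrak s)$, and in particular ruling out that modifying $\mathfrak s$ in a compact direction introduces skew-symmetric derivations lying outside the original extended algebra. Overcoming this will rely on the structural conditions in Definition \ref{def: normal modification}, especially that $\phi$ takes values in a compactly embedded abelian subalgebra normalizing $\mathfrak s$ with $[\mathfrak s,\mathfrak s] \subset \ker\phi$, combined with the fact that the Killing form $B_\mathfrak f$ restricts non-degenerately to $N_\mathfrak l(\mathfrak s)$ modulo its center — properties that together single out $\mathfrak s'$ as a genuinely canonical choice among normal modifications of $\mathfrak s$ rather than merely a convenient one.
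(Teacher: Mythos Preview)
The paper does not prove this theorem at all: it is stated in Section~\ref{sec:prereq isom grp solvmfld} as a prerequisite result of Gordon and Wilson, with the sole attribution ``From Section 3 of that work, we have the following.'' There is therefore no proof in the paper to compare your proposal against; your sketch is an attempt to reconstruct the argument from \cite{GordonWilson:IsomGrpsOfRiemSolv} rather than anything the present paper supplies.

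That said, your outline is broadly in the spirit of the Gordon--Wilson argument, and you have correctly located the crux: the stabilization $\mathfrak f_{\mathfrak s''} = \mathfrak f_{\mathfrak s'}$ after one iteration. However, your proposed mechanism for this --- that $Id+\phi$ extends to a Lie-algebra embedding $\mathfrak f_{\mathfrak t} \hookrightarrow \mathfrak f_{\tilde{\mathfrak t}}$ pulling back Killing forms --- is not quite right as stated, since $\phi$ is only linear on $\mathfrak t$, not a Lie homomorphism, and the extended algebras $\mathfrak f_{\mathfrak t}$ and $\mathfrak f_{\tilde{\mathfrak t}}$ need not be related by such a simple map. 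The actual argument in \cite{GordonWilson:IsomGrpsOfRiemSolv} works inside the ambient isometry algebra $\mathfrak g$ throughout and uses the global structure of $\mathfrak g$ (in particular the description of $\mathfrak f$ via the Levi and Iwasawa decompositions, as in Lemma~\ref{lemma: properties of f}) to control $N_\mathfrak l(\mathfrak s')$; your purely local comparison of $\mathfrak f_{\mathfrak t}$ and $\mathfrak f_{\tilde{\mathfrak t}}$ does not obviously give this. For (ii), your sketch also relies on an unstated fact --- that any two transitive solvable subalgebras of $\mathfrak g$ are, up to conjugacy, normal modifications of one another --- which is itself a nontrivial result of \cite{GordonWilson:IsomGrpsOfRiemSolv} and would need to be cited or proved.
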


As we can obtain a solvable algebra in standard position by applying two successive standard modifications, this is  called the standard modification algorithm.  Abusing language, we  often refer to `the' solvable group in standard position.  

Let $\mathfrak g = \mathfrak g_1 + \mathfrak g_2$ be a Levi decomposition (i.e. $\mathfrak g_1$ is semi-simple and $\mathfrak g_2$ is the radical of $\mathfrak g$).  Further, decompose the Levi factor $\mathfrak g_1 = \mathfrak g_{nc} + \mathfrak g_c$ into ideals which are of non-compact and compact type, respectively.  Consider an Iwasawa decomposition $\mathfrak g_{nc} = \mathfrak k + \mathfrak a +\mathfrak n$ and denote by $\mathfrak m = N_\mathfrak k(\mathfrak a)$ the normalizer of $\mathfrak a$ in $\mathfrak k$.  The following results come from Section 1 of \cite{GordonWilson:IsomGrpsOfRiemSolv}.

\begin{lemma}
\label{lemma: g2 in zNls + s}\label{lemma: nilrad(G) < S}\label{lemma: properties of f}
Let $\{S,g\}$ be a simply-connected solvable Lie group with left-invariant metric.  Denote the isometry algebra by $\mathfrak g$ and the isotropy algebra at a point $p\in S$ by $\mathfrak g_p$.  Consider $\mathfrak f = \mathfrak m + \mathfrak a + \mathfrak n + \mathfrak g_c + \mathfrak g_2$, then
\begin{enumerate}
\item For any choice of $p\in S$, the orthogonal complement of $\mathfrak f\cap \mathfrak g_p$ in $\mathfrak f$, relative to the Killing form $B_\mathfrak f$ of $\mathfrak f$, is a solvable algebra $\mathfrak s''$ in standard position,
\item $\mathfrak f = N_\mathfrak g(\mathfrak s'')$,
\item $\mathfrak f = N_\mathfrak l (\mathfrak s'') \ltimes \mathfrak s''$,
\item $\mathfrak g_2 \subset N_\mathfrak l (\mathfrak s'')\ltimes \mathfrak s''$,
\item $nilrad(\mathfrak g) < \mathfrak s''$, 
\end{enumerate}
where $N_\mathfrak l (\mathfrak s'')$ denotes the set of skew-symmetric derivations of $\mathfrak s''$. 
\end{lemma}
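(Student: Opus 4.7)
The plan is to construct $\mathfrak s''$ explicitly as prescribed by (i) and then to chase (ii)--(v) from the resulting structure. The starting observation is that $\mathfrak f$ is itself a subalgebra of $\mathfrak g$: the block $\mathfrak m + \mathfrak a + \mathfrak n$ is the minimal parabolic subalgebra of $\mathfrak g_{nc}$, $\mathfrak g_c$ commutes with $\mathfrak g_{nc}$, and $\mathfrak g_2$ is an ideal of $\mathfrak g$. Regrouping $\mathfrak f = (\mathfrak m + \mathfrak g_c) + (\mathfrak a + \mathfrak n + \mathfrak g_2)$ displays $\mathfrak m + \mathfrak g_c$ as a compactly embedded subalgebra and $\mathfrak a + \mathfrak n + \mathfrak g_2$ as the solvable radical of $\mathfrak f$. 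This is the scaffolding against which everything is checked.

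For (i), I would first verify that $\mathfrak f$ acts transitively on $S$. The only directions of $\mathfrak g$ missing from $\mathfrak f$ lie in the compactly embedded complement $\mathfrak k\ominus \mathfrak m$; since $\mathfrak g_p$ is compactly embedded in $\mathfrak g$, one may conjugate within $K$ to arrange $\mathfrak k\ominus \mathfrak m\subset \mathfrak g_p$, giving $\mathfrak f + \mathfrak g_p = \mathfrak g$ and hence $\dim \mathfrak s'' = \dim S$. Next, one checks that after this alignment, $\mathfrak f\cap \mathfrak g_p$ is precisely the maximal compactly embedded subalgebra of $\mathfrak f$, namely $\mathfrak m + \mathfrak g_c + \mathfrak u$ where $\mathfrak u$ is a maximal compactly embedded subalgebra of $\mathfrak g_2$. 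Its $B_\mathfrak f$-orthogonal complement is then an $\operatorname{ad}(\mathfrak f\cap \mathfrak g_p)$-stable complement on which $B_\mathfrak f$ is nonpositive; inspection of the Cartan-type decomposition of $\mathfrak f$ shows this complement contains $\mathfrak a + \mathfrak n$ together with the $B_\mathfrak f$-null part of $\mathfrak g_2$, and is therefore a solvable subalgebra $\mathfrak s''$. Standard position then reduces to identifying $N_\mathfrak l(\mathfrak s'')$ with $\mathfrak f\cap \mathfrak g_p$, i.e.~to showing that every skew-symmetric derivation of $\mathfrak s''$ arising from $\mathfrak f$ actually sits in the isotropy at $p$.

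For (ii), the inclusion $\mathfrak f\subset N_\mathfrak g(\mathfrak s'')$ is immediate once $\mathfrak s''$ is known to be an ideal of $\mathfrak f$, since the compactly embedded complement acts by skew-symmetric derivations and so preserves $\mathfrak s''$. Conversely, any $X\in \mathfrak g\setminus \mathfrak f$ has nontrivial projection to $\mathfrak k\ominus \mathfrak m$, so $\operatorname{ad}X$ rotates $\mathfrak a$ nontrivially and cannot preserve an algebra containing $\mathfrak a + \mathfrak n$. Statement (iii) then follows immediately from the splitting $\mathfrak f = (\mathfrak f\cap \mathfrak g_p) \oplus \mathfrak s'' = N_\mathfrak l(\mathfrak s'') \oplus \mathfrak s''$ produced in (i). Statement (iv) is built into the construction, since $\mathfrak g_2\subset \mathfrak f$ by definition. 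For (v), note $\operatorname{nilrad}(\mathfrak g)\subset \mathfrak g_2\subset \mathfrak f$; each element of $\operatorname{nilrad}(\mathfrak g)$ is $\operatorname{ad}$-nilpotent, while each element of the compactly embedded $\mathfrak f\cap \mathfrak g_p$ is $\operatorname{ad}$-semisimple, so the trace form $B_\mathfrak f$ pairs them trivially and $\operatorname{nilrad}(\mathfrak g)\subset \mathfrak s''$.

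The main obstacle I expect is the precise identification, in (i), of $\mathfrak f\cap \mathfrak g_p$ with the maximal compactly embedded subalgebra $\mathfrak m + \mathfrak g_c + \mathfrak u$ of $\mathfrak f$. Matching the isotropy to the Iwasawa data requires knowing (up to conjugation in $G$) that $\mathfrak g_p$ contains both $\mathfrak m$ and a maximal compact of the radical $\mathfrak g_2$, which in turn relies on the fact that $\mathfrak a + \mathfrak n + \mathfrak g_2$ already acts transitively on $S$. This alignment is what drives everything: it supplies the dimension count in (i), the splitting used in (iii), and the rigidity needed for (ii) and (v).
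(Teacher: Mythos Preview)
The paper does not prove this lemma at all; the sentence immediately preceding it reads ``The following results come from Section 1 of \cite{GordonWilson:IsomGrpsOfRiemSolv},'' and no argument is supplied. So there is nothing to compare against here beyond that citation.

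Your outline is in the right spirit and tracks how Gordon--Wilson actually organize the proof, but there is a genuine gap in your handling of (i). The lemma asserts the conclusion \emph{for every} $p\in S$, whereas your argument begins by conjugating within $K$ to arrange $\mathfrak k\ominus\mathfrak m\subset\mathfrak g_p$; that replaces $p$ by another point and proves the statement only for a special choice. To get the full claim you must argue that the construction is equivariant: varying $p$ conjugates $\mathfrak g_p$ by $\mathrm{Ad}(G)$, and since $G=G_{nc}F$ with $F$ normalizing $\mathfrak s''$ and preserving $B_{\mathfrak f}$, the $B_{\mathfrak f}$-orthogonal complement moves correspondingly and remains in standard position. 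Without this step your (iii) also floats, because the identification $N_{\mathfrak l}(\mathfrak s'')=\mathfrak f\cap\mathfrak g_p$ is tied to the particular $p$ you chose.

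Your sketch for (ii) is also too quick. Saying that $X\in\mathfrak k\setminus\mathfrak m$ ``rotates $\mathfrak a$ nontrivially'' only shows $[X,\mathfrak a]\not\subset\mathfrak a$; you still need $[X,\mathfrak a]\not\subset\mathfrak s''$, which requires knowing that $\mathfrak s''\cap\mathfrak g_{nc}=\mathfrak a+\mathfrak n$ exactly, i.e.\ that the $\mathfrak g_{nc}$-part of $\mathfrak s''$ picks up nothing from $\mathfrak k$. That in turn rests on the identification of $\mathfrak f\cap\mathfrak g_p$ with the maximal compactly embedded subalgebra of $\mathfrak f$, which you correctly flag as the crux but do not establish. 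The arguments for (iv) and (v) are fine once (i)--(iii) are in place.
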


We point out that the last two items apply to any solvable algebra in standard position as $\mathfrak g_2$ and $nilrad(\mathfrak g) = nilrad(\mathfrak g_2)$ do not change under conjugation in $\mathfrak g$.

\begin{lemma}\label{lemma: standard simply connected solv acts simply transitively} Let $M$ be a simply-connected solvmanifold with solvable group of isometries $S$ in standard position.  Then $S$ acts simply-transitively and is simply-connected itself.
\end{lemma}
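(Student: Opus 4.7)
The plan is to show that $S$ acts transitively on $M$, then that this action has trivial isotropy (simple transitivity), and finally to deduce simple connectedness of $S$ from the orbit map.

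\textbf{Transitivity.} Since $M$ is a solvmanifold, $G = \mathrm{Isom}(M)_0$ contains a connected solvable subgroup $S_0$ acting transitively, with Lie algebra $\mathfrak{s}_0$. By Theorem \ref{thm: GW - standard position}(iii), $\mathfrak{s}$ is obtained from $\mathfrak{s}_0$ by two successive standard modifications; each replaces a solvable subalgebra $\mathfrak{r}$ by $(Id + \phi)(\mathfrak{r})$, with $\phi(\mathfrak{r})$ contained in a compactly embedded subalgebra $\mathfrak{l}\subset\mathfrak{g}$ (Definition \ref{def: normal modification} and Proposition \ref{prop: properties of std. mod.}). The plan is to select a point $p\in M$ fixed by each of the two compact groups integrating the relevant $\mathfrak{l}$'s. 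At such a $p$, $\phi(\mathfrak{r})\subset\mathfrak{g}_p$, so the orbit-tangent map $\mathrm{ev}_p:\mathfrak{g}\to T_pM$ satisfies $\mathrm{ev}_p(X+\phi(X)) = \mathrm{ev}_p(X)$ for $X\in\mathfrak{r}$. Thus each modification leaves the image of $\mathrm{ev}_p$ unchanged, and transitivity at $p$ is preserved. So $S$ acts transitively on $M$.

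\textbf{Simple transitivity.} Using the same $p$, arrange additionally (via Iwasawa) that $\mathfrak{k}\subset\mathfrak{g}_p$, so that $\mathfrak{g} = \mathfrak{k} + \mathfrak{f}\subset\mathfrak{g}_p + \mathfrak{f}$ with $\mathfrak{f} = \mathfrak{m}+\mathfrak{a}+\mathfrak{n}+\mathfrak{g}_c+\mathfrak{g}_2$. By Lemma \ref{lemma: g2 in zNls + s}(i), $\mathfrak{s}$ is the $B_\mathfrak{f}$-orthogonal complement of $\mathfrak{f}\cap\mathfrak{g}_p$ inside $\mathfrak{f}$. To promote transitivity to simple transitivity we must verify $\mathfrak{s}\cap\mathfrak{g}_p = 0$, equivalently that this orthogonal complement is an internal vector-space direct sum complement. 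Since $\mathfrak{f}\cap\mathfrak{g}_p$ sits in the isotropy, it is compactly embedded in $\mathfrak{f}$, so for $X$ in this subalgebra $B_\mathfrak{f}(X,X) = \mathrm{tr}((\mathrm{ad}\,X|_\mathfrak{f})^2)\leq 0$ with equality only when $X$ is central in $\mathfrak{f}$. The plan is to use Lemma \ref{lemma: g2 in zNls + s}(ii)--(iii) (which give $\mathfrak{f} = N_\mathfrak{l}(\mathfrak{s})\ltimes\mathfrak{s}$ and $\mathfrak{f} = N_\mathfrak{g}(\mathfrak{s})$) to argue that any central, compactly embedded element of $\mathfrak{f}$ lying also in $\mathfrak{g}_p$ must be zero --- otherwise it would lie in $\mathfrak{s}$ as a nontrivial skew-symmetric derivation fixing $p$, incompatible with the standard-position definition. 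Combining with the decomposition above yields $\mathfrak{g} = \mathfrak{s}\oplus\mathfrak{g}_p$, so $S$ acts simply-transitively.

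\textbf{Simple connectedness.} Given simple transitivity, the orbit map $\pi:S\to M$, $s\mapsto s\cdot p$, is a smooth equivariant bijection; since isometric Lie-group actions are proper, $\pi$ is a homeomorphism and hence a diffeomorphism. As $M$ is simply connected, so is $S$.

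\textbf{Main obstacle.} Transitivity is essentially automatic from the modification framework, and simple connectedness from simple transitivity is routine. The substantive step is showing $\mathfrak{s}\cap\mathfrak{g}_p = 0$, i.e., that the Killing-form orthogonal complement defining $\mathfrak{s}$ is non-degenerate against the isotropy direction. The analysis reduces to ruling out central compactly embedded elements of $\mathfrak{f}$ sitting simultaneously inside $\mathfrak{g}_p$ and $\mathfrak{s}$, which is where the full force of being in standard position (rather than just a normal modification) enters.
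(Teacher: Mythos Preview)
The paper does not actually supply a proof of this lemma --- it says only that ``the proof follows quickly from the construction of groups in standard position'' and leaves it to the reader. So there is no line-by-line comparison to make; what follows is a comparison of your outline against the short argument implicit in Lemma~\ref{lemma: properties of f}.

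Your overall architecture (transitivity, then trivial isotropy, then simple connectedness via the orbit map) is sound, and the third step is correct as written. But you are working much harder than necessary in the first two steps, and the difficulty you flag as the ``main obstacle'' dissolves once you use Lemma~\ref{lemma: properties of f}(ii)--(iii) more directly.

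For \emph{simple transitivity}, you do not need the Killing-form negativity argument at all. By Lemma~\ref{lemma: properties of f}(iii), $\mathfrak f = N_\mathfrak l(\mathfrak s)\ltimes\mathfrak s$ is a vector-space \emph{direct} sum, and (as the paper makes explicit later, e.g.\ in the proof of Lemma~\ref{lemma: s2 properties}) the factor $N_\mathfrak l(\mathfrak s)$ is precisely $\mathfrak f\cap\mathfrak g_p$. Hence
\[
\mathfrak s\cap\mathfrak g_p \;=\; \mathfrak s\cap(\mathfrak f\cap\mathfrak g_p)\;=\;\mathfrak s\cap N_\mathfrak l(\mathfrak s)\;=\;0,
\]
with no analysis of central elements required. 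If you prefer to finish your own route, the ``central, compactly embedded $X\in\mathfrak g_p$'' case is handled not by the standard-position definition but by \emph{faithfulness of the isotropy representation}: such $X$ has $\mathrm{ad}\,X|_\mathfrak s=0$, so it acts trivially on $T_pM\cong\mathfrak g/\mathfrak g_p$, forcing $X=0$.

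For \emph{transitivity}, tracking a common fixed point through two successive modifications is delicate (the two compactly embedded algebras need not share a fixed point a priori). The cleaner argument: the connected group $F$ with Lie algebra $\mathfrak f$ already acts transitively (it contains $\mathfrak a+\mathfrak n+\mathfrak g_2$, hence a transitive solvable subgroup; apply Lemma~\ref{lemma: open orbit implies transitive action}). Since $\mathfrak f=N_\mathfrak l(\mathfrak s)+\mathfrak s$ with $N_\mathfrak l(\mathfrak s)\subset\mathfrak g_p$, the $S$-orbit through $p$ is open, hence all of $M$.

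In short: your plan is not wrong, but the ``obstacle'' you isolate is an artifact of bypassing the semidirect-product decomposition that the construction already hands you.
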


There is no proof of this in in \cite{GordonWilson:IsomGrpsOfRiemSolv}, although it is alluded to.  The proof follows quickly from the construction of groups in standard position; we leave the proof to the diligent reader.

\begin{lemma}\label{lemma: open orbit implies transitive action} Let $M$ be a solvmanifold and $R$ a solvable subgroup of isometries such that the $R$ action on $M$ has an open orbit.  Then $R$ acts transitively on $M$
\end{lemma}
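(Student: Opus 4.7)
The plan is a completeness argument, which in fact does not use the solvability of $R$ at all. Let $O := R\cdot p$ denote the open orbit; the goal is to show $O = M$. Being an open subset of $M$, $O$ inherits a Riemannian metric, and since $R$ preserves $O$ and acts on $M$ by isometries, $R$ acts on $O$ (with the induced metric) by isometries. As $O$ is by definition an $R$-orbit, this action is transitive, so $O$ is itself a Riemannian homogeneous space and is therefore geodesically complete, by the standard fact that Riemannian homogeneous spaces are complete (a uniform local extension interval can be translated around $O$ by the transitive action).

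At the same time, $M$ is a solvmanifold and hence homogeneous, so $M$ is also complete. I would then show that a geodesically complete open submanifold of a connected complete Riemannian manifold must be the whole manifold. Fix $q \in M$; by Hopf--Rinow in $M$ there is a unit-speed minimizing geodesic $\gamma : [0,L] \to M$ from $p$ to $q$. Let $t_0 = \inf\{t>0 : \gamma(t)\notin O\}$; since $O$ is open and $p \in O$, we have $t_0 \in (0,L]$ and $\gamma([0,t_0))\subset O$. The restriction $\gamma|_{[0,t_0)}$ is a geodesic in $O$ for the induced metric, so completeness of $O$ extends it to a geodesic $\tilde\gamma : [0,\infty) \to O$. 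Because $O$ is open in $M$, $\tilde\gamma$ is also a geodesic of $M$; by uniqueness of geodesics with prescribed initial data, $\tilde\gamma$ and $\gamma$ coincide wherever both are defined, so in particular $\gamma(t_0) = \tilde\gamma(t_0) \in O$. This contradicts the choice of $t_0$ unless $t_0 = L$, in which case $q = \gamma(L) \in O$; either way $q \in O$, so $O = M$ and $R$ acts transitively on $M$.

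There is no substantive obstacle in this approach: the argument rests only on completeness of Riemannian homogeneous spaces and uniqueness of geodesics in $M$. The only small points worth noting are that the induced metric on $O$ is preserved by $R$ (immediate from $R$-invariance of $O$) and that geodesics of an open submanifold coincide locally with ambient geodesics. As remarked above, the proof does not actually use solvability of $R$, and yields the more general fact that any subgroup of $Isom(M)$ acting on a complete homogeneous Riemannian manifold with an open orbit must be transitive.
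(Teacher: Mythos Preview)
Your argument is correct and is essentially the same as the paper's: the paper also observes that the open orbit is a homogeneous Riemannian manifold, hence complete, and then invokes the fact that an open complete submanifold of a complete manifold is the whole manifold. You simply spell out that last step via Hopf--Rinow and uniqueness of geodesics, whereas the paper states it without proof (and separately cites Gordon--Wilson for the special case of closed $R$, which your argument subsumes).
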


The case that $R$ is a closed subgroup of isometries by proven by Gordon-Wilson, see the proof of Theorem 1.11 (iii) in \cite{GordonWilson:IsomGrpsOfRiemSolv}.  For the general case, one simply observes that the open orbit of $R$ is again a homogeneous Riemannian manifold and so is complete.  However, an open complete submanifold of a complete manifold must be the entire manifold, i.e, $R$ acts transitively.

We warn the unfamiliar reader that this result is special to Riemannian homogeneous spaces.  There do exist topological homogeneous spaces $G/H$ with subgroups of $G$ acting with open, non-transitive orbits.

We record one final useful lemma.  For proof, see Lemma 2.6 of \cite{GordonWilson:IsomGrpsOfRiemSolv}.

\begin{lemma}\label{lemma: derivation of solvabe goes to nilradical} Let $\mathfrak s$ be a solvable Lie algebra with nilradical $\mathfrak{n(s)}$.  For any derivation $D\in Der(\mathfrak s)$, we have $D: \mathfrak s \to \mathfrak{n(s)}$.
\end{lemma}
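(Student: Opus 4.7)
The plan is to enlarge $\mathfrak{s}$ by the derivation $D$ itself. Form the semidirect product $\tilde{\mathfrak{s}} = \mathbb{R}D \ltimes \mathfrak{s}$, where $\mathbb{R}D$ acts on $\mathfrak{s}$ by $D$. Since $\mathfrak{s}$ is solvable and we are extending by a one-dimensional abelian piece, $\tilde{\mathfrak{s}}$ is again a solvable Lie algebra, with $\mathfrak{s}$ as a codimension-one ideal.

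The key observation is then that, by construction of the semidirect product, $[D, X] = D(X)$ for every $X \in \mathfrak{s}$, so $D(\mathfrak{s}) \subset [\tilde{\mathfrak{s}}, \tilde{\mathfrak{s}}]$. For any solvable Lie algebra, the derived subalgebra is nilpotent (this is a standard consequence of Lie's theorem; in fact it already appeared implicitly in the excerpt, e.g. in the proof of Lemma \ref{lemma: eigenvalues of ad X = those of ad X+N for N nilpotent} where the nilradical is identified with the ad-nilpotent elements). Hence $[\tilde{\mathfrak{s}}, \tilde{\mathfrak{s}}] \subset \mathfrak{n}(\tilde{\mathfrak{s}})$, and therefore $D(\mathfrak{s}) \subset \mathfrak{n}(\tilde{\mathfrak{s}})$.

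It remains only to transfer this back into $\mathfrak{s}$. Of course $D(\mathfrak{s}) \subset \mathfrak{s}$ by hypothesis, so $D(\mathfrak{s}) \subset \mathfrak{n}(\tilde{\mathfrak{s}}) \cap \mathfrak{s}$. The intersection $\mathfrak{n}(\tilde{\mathfrak{s}}) \cap \mathfrak{s}$ is a nilpotent subalgebra (being contained in the nilpotent ideal $\mathfrak{n}(\tilde{\mathfrak{s}})$) and an ideal of $\mathfrak{s}$ (being the intersection of an ideal of $\tilde{\mathfrak{s}}$ with the subalgebra $\mathfrak{s}$). By maximality of the nilradical among nilpotent ideals, $\mathfrak{n}(\tilde{\mathfrak{s}}) \cap \mathfrak{s} \subset \mathfrak{n}(\mathfrak{s})$, and the result follows.

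The only real subtlety is the last step, and it is minor: one must check that $\mathfrak{n}(\tilde{\mathfrak{s}}) \cap \mathfrak{s}$ is genuinely an ideal of $\mathfrak{s}$ (immediate from $\mathfrak{s} \subset \tilde{\mathfrak{s}}$ and $\mathfrak{n}(\tilde{\mathfrak{s}})$ being an ideal of $\tilde{\mathfrak{s}}$) and nilpotent (immediate as a subalgebra of a nilpotent algebra). No serious obstacle arises.
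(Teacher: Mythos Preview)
Your argument is correct: the semidirect-product trick is the standard way to prove this, and every step is sound. Note that the paper does not actually supply a proof of this lemma but simply refers to Lemma~2.6 of \cite{GordonWilson:IsomGrpsOfRiemSolv}; your self-contained argument is a perfectly good substitute.
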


\subsection{Almost completely solvable and standard position}
\label{sec: almost compl solv and std position}

\begin{thm}\label{thm: std. mod. preserves almost compl. solv.} Let $S$ be an almost completely solvable Lie group with left-invariant metric $g$.  Denote the isometry group by $Isom(S,g)$.  The solvable subgroup of $Isom(S,g)$ in standard position is also almost completely solvable.
\end{thm}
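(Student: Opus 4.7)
The plan is to invoke Theorem \ref{thm: GW - standard position}(iii), which says that $\mathfrak{s}''$ is obtained from $\mathfrak{s}$ by applying the standard modification twice. Consequently, it suffices to prove the following: if $\mathfrak{s}$ is almost completely solvable, then so is its standard modification $\mathfrak{s}' = (Id + \phi)(\mathfrak{s})$; one then applies this twice. Fix $X' = X + \phi(X) \in \mathfrak{s}'$ with $X \in \mathfrak{s}$ and assume $ad_{\mathfrak{s}'}~X'$ has only purely imaginary eigenvalues; the goal is to show these are all zero.

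The crux of the argument is a key lemma asserting $\mathfrak{n(s)} \subset \ker \phi$. To prove this, recall that $\mathfrak{s}'$ is the $B_\mathfrak{f}$-orthogonal complement of $N_\mathfrak{l}(\mathfrak{s})$ in $\mathfrak{f} = N_\mathfrak{l}(\mathfrak{s}) \ltimes \mathfrak{s}$ (Definition \ref{def: std. mod.}); writing $ad_\mathfrak{f}$ in block form with respect to $\mathfrak{f} = N_\mathfrak{l}(\mathfrak{s}) \oplus \mathfrak{s}$ yields $B_\mathfrak{f}(X, D) = \operatorname{tr}_\mathfrak{s}(ad~X \circ D)$ for $X \in \mathfrak{s}$ and $D \in N_\mathfrak{l}(\mathfrak{s})$. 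For $X \in \mathfrak{n(s)}$, this operator is nilpotent on $\mathfrak{s}$: the derivation $D$ sends $\mathfrak{s}$ into $\mathfrak{n(s)}$ by Lemma \ref{lemma: derivation of solvabe goes to nilradical} and preserves each term of the lower central series of $\mathfrak{n(s)}$, while $ad~X$ pushes each such term one step deeper. Iterating, $(ad~X \circ D)^k(\mathfrak{s})$ lies in an eventually trivial term of the series, so $\operatorname{tr}_\mathfrak{s}(ad~X \circ D) = 0$. Since $\mathfrak{s} \cap \mathfrak{s}' = \ker \phi$, this gives $\mathfrak{n(s)} \subset \ker \phi$.

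With the key lemma in hand, the remainder is a triangularization argument. Using $\phi(\mathfrak{s}) \subset N_\mathfrak{g}(\mathfrak{s})$ and $[\mathfrak{s},\mathfrak{s}] \subset \ker \phi$, one checks $[X', Y'] \in \ker \phi$ for every $Y' \in \mathfrak{s}'$, so the eigenvalues of $ad_{\mathfrak{s}'}~X'$ on $\mathfrak{s}'$ are those of $(ad~X + ad~\phi(X))|_{\ker \phi}$ together with zeros. The subalgebra $\mathfrak{h} = \mathfrak{s} + \mathbb{R}\phi(X) \subset \mathfrak{g}$ is solvable (as $\mathfrak{s}$ is an ideal and $\mathfrak{h}/\mathfrak{s}$ is one-dimensional) and has $\ker \phi$ as an invariant subspace, so Lie's theorem furnishes characters $\chi_i: \mathfrak{h} \to \mathbb{C}$; since $\phi(X)$ lies in a compactly embedded subalgebra, $\chi_i(\phi(X)) \in i\mathbb{R}$. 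The hypothesis then forces $\operatorname{Re}\chi_i(X) = 0$, and since the eigenvalues of $ad~X$ on $\mathfrak{s}$ are precisely $\{\chi_i(X)\} \cup \{0\}$, almost complete solvability of $\mathfrak{s}$ yields $X \in \mathfrak{n(s)}$. By the key lemma $X \in \ker \phi$, so $\phi(X) = 0$ and $X' = X$; as $ad~X$ is nilpotent on $\mathfrak{s}$, the block form shows $ad_{\mathfrak{s}'}~X'$ is nilpotent on $\mathfrak{s}'$.

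The main difficulty lies in the key lemma, which requires extracting the precise content of the Killing-form definition of the standard modification via the lower central series nilpotency computation. The remaining triangularization argument combines Lie's theorem with routine bookkeeping about the normal modification axioms.
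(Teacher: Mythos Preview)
Your argument is correct and runs parallel to the paper's, but the bookkeeping is organized differently.  Both proofs reduce to a single standard modification and use Lie's theorem to compare eigenvalues of $ad~X$ and $ad(X+\phi(X))$.  The paper first passes from $\mathfrak f = N_\mathfrak l(\mathfrak s)\ltimes \mathfrak s$ to its radical $\mathfrak r = \mathfrak z(N_\mathfrak l(\mathfrak s))\ltimes \mathfrak s$ (using that the Levi factor is $B_\mathfrak f$-orthogonal to $\mathfrak r$), triangularizes $ad(\mathfrak r)$ with $\mathfrak z(N_\mathfrak l(\mathfrak s))$ diagonal, and only invokes the Killing-form orthogonality at the very end: once $Y$ lies in the nilradical, $0=B_\mathfrak r(D,X)=B_\mathfrak r(D,D)=\operatorname{tr}(D|_\mathfrak s)^2$ forces $D=0$.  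You instead front-load the Killing-form computation into the standalone lemma $\mathfrak n(\mathfrak s)\subset\ker\phi$, proved by showing $ad~X\circ D$ is nilpotent via the lower central series; after that your triangularization is shorter, since $X\in\mathfrak n(\mathfrak s)$ immediately yields $\phi(X)=0$.  Your key lemma is a pleasant fact in its own right (and slightly stronger than what the paper extracts, since you never need $\phi(\mathfrak s)\subset\mathfrak z(N_\mathfrak l(\mathfrak s))$), while the paper's route is a bit more streamlined in that it avoids isolating this lemma and handles everything within one triangularization of $ad(\mathfrak r)$.
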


\begin{proof}
Recall, we can obtain the solvable subgroup in standard position by applying the standard modification algorithm twice, see Theorem \ref{thm: GW - standard position}.  To prove the theorem, it then suffices to  prove that applying the standard modification to an almost completely solvable group preserves this property.

To obtain the standard modification of $\mathfrak s$, we consider the set $N_\mathfrak l (\mathfrak s)$ of skew-symmetric derivations of $\mathfrak s$ (relative to $g$) and the algebra
	$$\mathfrak h := N_\mathfrak l (\mathfrak s) \ltimes \mathfrak s$$
The standard modification $\mathfrak s'$ is the orthogonal complement to $N_\mathfrak l (\mathfrak s)$ in $\mathfrak h$ relative to the Killing form of $\mathfrak h$.  As remarked by Gordon-Wilson (see Proposition \ref{prop: properties of std. mod.}), $\mathfrak s'$ is a solvable ideal of $\mathfrak h$.  Thus, $\mathfrak s'$ is contained in the radical of $\mathfrak h$.

Decompose $\mathfrak h = \mathfrak {lf} + \mathfrak r$ into a Levi factor $\mathfrak {lf}$ and the radical $\mathfrak r$.  Recall, the Levi factor is a maximal semi-simple subalgebra and this algebra is orthogonal to $\mathfrak r$ under the Killing form $B_\mathfrak h$ of $\mathfrak h$.  Here the radical is $\mathfrak r = \mathfrak z (N_\mathfrak l (\mathfrak s)) \ltimes \mathfrak s$, where $\mathfrak z(N_\mathfrak l (\mathfrak s))$ is the center of $N_\mathfrak l (\mathfrak s)$.

Denoting the Killing form of $\mathfrak r$ by $B_\mathfrak r$, we have
	$$ B_\mathfrak h (X,Y) = B_\mathfrak r (X,Y) \quad \mbox{ for } X\in \mathfrak r, \ Y\in \mathfrak r    $$
and so we see that the standard modification $\mathfrak s'$ is the orthogonal complement of $\mathfrak z(N_\mathfrak l (\mathfrak s))$ in $\mathfrak r = \mathfrak z (N_\mathfrak l (\mathfrak s)) \ltimes \mathfrak s$ relative to the Killing form of $\mathfrak r$.

Now consider the complexification of the adjoint representation of the solvable algebra $\mathfrak r = \mathfrak z (N_\mathfrak l (\mathfrak s)) \ltimes \mathfrak s$,
	$$ad(\mathfrak r): \mathfrak r \otimes \mathbb C \to \mathfrak r \otimes \mathbb C$$
Notice, this representation restricts to both $\mathfrak s\otimes \mathbb C$ and $\mathfrak s'\otimes \mathbb C$, as these algebras are ideals.  By Lie's Theorem (Theorem \ref{Lie's theorem}), we may view $ad(\mathfrak r)$ as a subalgebra of upper triangular matrices (relative to some choice of basis).  Further, we may assume that $\mathfrak z (N_\mathfrak l (\mathfrak s))$ is contained in the set of diagonal matrices as this is a fully reducible subalgebra.

Take $X\in \mathfrak s'$ such that $ad\ X : \mathfrak s' \to \mathfrak s'$ has purely imaginary eigenvalues.  Observe that the eigenvalues of $ad\ X:\mathfrak r \to \mathfrak r$ are also purely imaginary as, $\mathfrak s'$ being an ideal, we have only included more zero eigenvalues.  Write $X = D + Y$ with $D\in \mathfrak z(N_\mathfrak l (\mathfrak s))$ and $Y\in \mathfrak s$. As $ad\ D$  is a diagonal matrix in $\mathfrak {gl}(\mathfrak r \otimes \mathbb C)$, we have that the eigenvalues of the upper triangular matrix $ad\ Y = ad\ X -ad\ D$ are the eigenvalues of $ad\ X$ minus the eigenvalues of $ad\ D$.  This gives that the eigenvalues of $ad\ Y$ acting on $\mathfrak r$ are purely imaginary and hence the eigenvalues of $ad\ Y$ acting on $\mathfrak s$ are purely imaginary.

However, $\mathfrak s$ being almost completely solvable, by hypothesis, implies the eigenvalues of $ad\ Y|_\mathfrak s$ are zero.  Thus $ad\ Y|_\mathfrak r$ is nilpotent and $Y$ belongs to the nilradical of $\mathfrak r$.  

As the nilradical is in the kernel of $B_\mathfrak r$ and $\mathfrak s'$ is orthogonal to $\mathfrak z(N_\mathfrak l(\mathfrak s))$ under $B_\mathfrak r$, by construction,  we have
	$$ 0= B_\mathfrak r (D,X) = B_\mathfrak r(D,D) = tr \ (D|_\mathfrak s)^2$$
which implies $D=0$ as $D$ is skew-symmetric.  Thus $ad\ X = ad\ Y$ has only the zero eigenvalue and $\mathfrak s'$ is almost completely solvable.

\end{proof}

In general, the property of being an admissible Lie algebra is not preserved under normal and standard modifications.  (To see this, add a factor of $\mathbb R$ to any admissible algebra and modify this factor to act skew-symmetrically on the admissible algebra.)  It is not even clear if the property of being admissible is preserved for almost completely solvable algebras.  However, for our purposes, the following is enough.

\begin{thm}\label{thm: admissible almost compl solv impies center in [r,r]}
	Let $\mathfrak s$ be an almost completely solvable Lie algebra and let $\mathfrak r = \mathfrak z(N_l(\mathfrak s)) \ltimes \mathfrak s$.  If $\mathfrak s$ is admissible, then the  modification $\mathfrak s''$ of $\mathfrak s$ in standard position satisfies $\mathfrak{z(s'')} \subset [\mathfrak r,\mathfrak r]$.
\end{thm}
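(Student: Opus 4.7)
Plan. The strategy is to trace $\mathfrak{s}''$ through the two-step standard modification algorithm, placing it explicitly inside $\mathfrak{r}$, and then analyzing its center by an eigenvalue argument followed by an invocation of admissibility. This parallels the approach of the preceding theorem.

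By Theorem \ref{thm: GW - standard position}, $\mathfrak{s}''$ is obtained from $\mathfrak{s}$ by applying the standard modification twice. The first modification $\mathfrak{s}'$ sits inside $\mathfrak{r}$ as the $B_\mathfrak{r}$-orthogonal complement of $\mathfrak{z}(N_\mathfrak{l}(\mathfrak{s}))$, as established in the preceding proof. For the second modification, I would argue that the relevant central part of $N_\mathfrak{l}(\mathfrak{s}')$ contributing to the modification still lies inside $\mathfrak{z}(N_\mathfrak{l}(\mathfrak{s}))$, so that $\mathfrak{s}'' \subset \mathfrak{r}$ as well. This yields a presentation
\[
\mathfrak{s}'' = \{ Y + \psi(Y) : Y \in \mathfrak{s}\} \subset \mathfrak{r}
\]
with $\psi : \mathfrak{s} \to \mathfrak{z}(N_\mathfrak{l}(\mathfrak{s}))$ a linear map satisfying $\psi([\mathfrak{s},\mathfrak{s}])=0$ by the normal-modification axioms (Definition \ref{def: normal modification}).

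Next, for $X = Y + \psi(Y) \in \mathfrak{z}(\mathfrak{s}'')$, I would run the eigenvalue argument of Theorem \ref{thm: std. mod. preserves almost compl. solv.}. Applying Lie's theorem (Theorem \ref{Lie's theorem}) to the complexification $ad~\mathfrak{r}^\mathbb{C}$, the element $\psi(Y) \in \mathfrak{z}(N_\mathfrak{l}(\mathfrak{s}))$ is diagonal (with purely imaginary entries by skew-symmetry), while $ad~Y$ is upper triangular. Since $X$ is central in the almost completely solvable algebra $\mathfrak{s}''$ (Theorem \ref{thm: std. mod. preserves almost compl. solv.} guarantees almost-complete solvability is preserved), $ad~X$ is ad-nilpotent on $\mathfrak{s}''$, and the same $B_\mathfrak{r}$-orthogonality/skew-symmetry argument used to kill the diagonal part $D$ in the preceding proof forces $\psi(Y) = 0$. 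Thus $X = Y \in \mathfrak{s}$.

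Finally, the centrality condition $[Y, Y' + \psi(Y')] = 0$ for all $Y' \in \mathfrak{s}$ unfolds to $[Y,Y'] = \psi(Y') \cdot Y$, placing the image of $ad~Y$ inside $\mathfrak{z}(N_\mathfrak{l}(\mathfrak{s})) \cdot \mathfrak{s} \subset [\mathfrak{r},\mathfrak{r}]$. Admissibility $\mathfrak{z}(\mathfrak{s}) \subset [\mathfrak{s},\mathfrak{s}]$ then lets me conclude $Y \in [\mathfrak{r},\mathfrak{r}]$: the equation $[Y,Y'] = \psi(Y')\cdot Y$ lets me recombine the commutator and derivation parts to express $Y$ as a sum of elements of $[\mathfrak{s},\mathfrak{s}] + \mathfrak{z}(N_\mathfrak{l}(\mathfrak{s}))\cdot\mathfrak{s} = [\mathfrak{r},\mathfrak{r}]$. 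The main obstacle is exactly this final passage: the relation $[Y,Y']=\psi(Y')\cdot Y$ tells us only that $ad~Y$ has image in $[\mathfrak{r},\mathfrak{r}]$, not that $Y$ itself lies there, and it is precisely the admissibility hypothesis — which is why the conclusion is $\mathfrak{z}(\mathfrak{s}'') \subset [\mathfrak{r},\mathfrak{r}]$ rather than the sharper $\mathfrak{z}(\mathfrak{s}'') \subset [\mathfrak{s}'',\mathfrak{s}'']$ — that makes this passage possible.
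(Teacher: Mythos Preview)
Your plan has two genuine gaps.

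\textbf{First gap: the presentation of $\mathfrak{s}''$.} You assert that the second modification can be absorbed into the first, giving $\mathfrak{s}'' \subset \mathfrak r$ and a single map $\psi:\mathfrak s \to \mathfrak z(N_\mathfrak l(\mathfrak s))$ with $\mathfrak s'' = \{Y+\psi(Y)\}$. This is not justified: the second standard modification uses skew-symmetric derivations of $\mathfrak s'$ (with its own induced metric), and there is no reason those lie in $\mathfrak z(N_\mathfrak l(\mathfrak s))$ or even in $\mathfrak r$. The paper never claims $\mathfrak s'' \subset \mathfrak r$. Instead it proves a \emph{one-step} lemma --- for any almost completely solvable $\mathfrak s$, the standard modification $\mathfrak s'$ satisfies $\mathfrak z(\mathfrak s') \subset [\mathfrak s,\mathfrak s]+\mathfrak z(\mathfrak s)$ --- and then iterates: applying this to $\mathfrak s' \mapsto \mathfrak s''$ (legitimate since $\mathfrak s'$ is again almost completely solvable by Theorem~\ref{thm: std. mod. preserves almost compl. solv.}) gives $\mathfrak z(\mathfrak s'') \subset [\mathfrak s',\mathfrak s']+[\mathfrak s,\mathfrak s]+\mathfrak z(\mathfrak s)$, and admissibility plus $\mathfrak s,\mathfrak s'\subset\mathfrak r$ finishes.

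\textbf{Second gap: the ``main obstacle'' is not actually overcome.} You correctly identify that from $[Y,Y']=-\psi(Y')\!\cdot\! Y$ one only gets information about the \emph{image} of $ad~Y$, not about $Y$ itself, and you say admissibility ``makes this passage possible'' without showing how. This is the crux, and it requires a real argument. In the paper's one-step lemma, once $X\in\mathfrak z(\mathfrak s')$ has been shown to lie in $\mathfrak s$ (your eigenvalue argument does reproduce this part), the relation $D_i(X)=-[Y_i,X]$ is exploited via the operators $D_i^{4k}$: since each $D_i$ is skew-symmetric, $D_i^{4}$ is symmetric with non-negative eigenvalues, and one shows $D_i^{4k}(X)\in \mathfrak z(\mathfrak s')\cap[\mathfrak s,\mathfrak s]$ for all $k$. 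Taking $k\to\infty$ after normalizing by the top eigenvalue peels off the highest eigencomponent of $X$ as an element of $\mathfrak z(\mathfrak s')\cap[\mathfrak s,\mathfrak s]$; iterating over eigenvalues and over the $D_i$ decomposes $X=X_1+X_2$ with $X_1\in[\mathfrak s,\mathfrak s]$ and $X_2\in\bigcap_i\ker D_i$, hence $X_2\in\mathfrak z(\mathfrak s)$. This spectral peeling is the missing idea in your outline.
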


\begin{proof}
	Recall, we may construct $\mathfrak s''$ from $\mathfrak s$ by applying two successive standard modifications.  	First we claim that
		$$\mathfrak{z(s')} \subset [\mathfrak s,\mathfrak s] + \mathfrak{z(s)}$$
where $\mathfrak s'$ is the standard modification of $\mathfrak s$.  Using this and the theorem above, we immediately have that $\mathfrak{z(s'')} \subset [\mathfrak s',\mathfrak s'] + \mathfrak{z(s')} \subset [\mathfrak s',\mathfrak s'] + [\mathfrak s,\mathfrak s] + \mathfrak{z(s)}$.  By hypothesis, $\mathfrak{z(s)} \subset [\mathfrak s,\mathfrak s]$, and the result follows from the fact that $\mathfrak s$ and $\mathfrak s'$ are subalgebras of $\mathfrak r$.

	Now we prove the first claim.  We begin by showing $\mathfrak{z(s')} \subset \mathfrak s$.  As $\mathfrak{s'}$ is a standard modification of $\mathfrak s$, given $X\in \mathfrak{z(s')} \subset \mathfrak{s'}$, there exists $Y\in\mathfrak s$ and $D\in \mathfrak z(N_l(\mathfrak s))$ such that $X = D+Y$.  As $ad~X$ has only the zero eigenvalue on $\mathfrak s'$ (as it is trivial), and  $\mathfrak s'$ is an ideal of $\mathfrak r$, we see that $ad~X$ on $\mathfrak r$ has zero as its sole eigenvalue.
	
	As $\mathfrak s$ is an ideal of $\mathfrak r$, we may restrict $ad~X = D+ad~Y$  to $\mathfrak s$ and this restriction, again, has zero as its sole eigenvalue.  However, the proof of the previous theorem shows (via Lie's Theorem) that the eigenvalues of $ad~Y$ are sums of the eigenvalues of $D$ and $ad~X$.  Thus $ad~Y$ has purely imaginary eigenvalues and $\mathfrak s$ being almost completely solvable implies these are all zero.  Now we see that the eigenvalues of $D$ restricted to $\mathfrak s$ are all zero, i.e., $D=0$ and thus $X=Y\in \mathfrak s$.
	
	Recall, the standard modification is a normal modification (see Proposition \ref{prop: properties of std. mod.}) and thus we have that $\mathfrak s'$ is spanned by $[\mathfrak s,\mathfrak s]$ and $\{D_1+Y_1, \dots, D_k +Y_k\}$, where $\{Y_1,\dots,Y_k\}$ spans a complement to $[\mathfrak s,\mathfrak s]$ in $\mathfrak s$ and $D_i\in \mathfrak z(N_l(\mathfrak s))$.	As $X\in \mathfrak{z(s')}$, we see that 
		$$ 0 = [D_1+Y_1, X] = D_1(X) + [Y_1,X]$$
From this, the fact that $D\in Der(\mathfrak s)$, and the fact that $\mathfrak s'$ is an ideal of $\mathfrak r$, it follows that $D_1^{4k} (X) \in \mathfrak{z(s')} \cap [\mathfrak s,\mathfrak s]$.  Note that $D_1^{4k}$ is a symmetric operator on $\mathfrak s$ with non-negative eigenvalues.  Let $\lambda$ be the highest (non-zero) eigenvalue of $D_1^4$ with component $X_\lambda$ in $X$.  Then we see that
	$$X_\lambda = \lim _{k\to \infty}  \frac{1}{\lambda^k} D_1^{4k}X \in \mathfrak{z(s')} \cap [\mathfrak s,\mathfrak s]     $$
By considering $X-X_\lambda$, we may see the next highest eigen-component is also an element of $\mathfrak{z(s')} \cap [\mathfrak s,\mathfrak s]$.  Running through all the non-zero eigenvalues of $D_1^4$, we see that $X$ may be decomposed as $X= X' + X''$ where $X' \in \mathfrak{z(s')} \cap [\mathfrak s,\mathfrak s]$ and $X'' \in Ker ~ D_1$.  Note that $X'' = X - X' \in \mathfrak{z(s')}$ and we may apply the same argument to $X''$ with $D_2 + Y_2$, etc., to achieve a decomposition 
	$$X = X_1 + X_2, \quad   \mbox{ where } X_1 \in \mathfrak{z(s')} \cap [\mathfrak s,\mathfrak s]  \mbox{ and   }  X_2 \in \mathfrak{z(s')} \cap \cap_{i=1}^k Ker~D_i  $$
The last condition shows precisely that $X_2\in \mathfrak{z(s)}$.

Finally, applying the above arguments to a basis of $\mathfrak{z(s')}$ proves our claim and hence the theorem.
\end{proof}

\section{Strongly solvable spaces}
\label{sec: strongly solv spaces}

One of the main goals of this work is to understand the transitive groups of isometries which act on a given  solvmanifold.  We begin with an illustrative example which shows that the isometry group can be  very hard to recover just from the solvable Lie group at hand.  This example also demonstrates that a Riemannian homogeneous space can have radically different homogeneous structures.

\begin{example}\label{ex: bad metric on H^2 x R}
Consider $G=\widetilde{SL_2\mathbb R}$ and $S = H^2 \times \mathbb R$, where $H^2$ is an Iwasawa subgroup of $SL_2\mathbb R$.  
There exist left-invariant metrics on $G$ and $S$ making them isometric as Riemannian manifolds.
\end{example}

\begin{proof}
To constuct such metrics on $G$ and $S$, consider $G=\widetilde{SL_2\mathbb R}$ with Iwasawa decomposition $KAN$.  Endow $G$ with a left-invariant metric which is also right $K$-invariant; this is possible as the center $Z$ of $G$ is contained in $K$ and $K/Z$ is compact.  As computed in \cite{Gordon:RiemannianIsometryGroupsContainingTransitiveReductiveSubgroups}, the isometry group of this Riemannian Lie group is 
	$$G\times K / \Delta Z$$
where $\Delta Z$ is the diagonal imbedding of $Z$ in $K\times K$.  Consequently, we see that the solvable group $S=AN\times K$ acts simply-transitively on $G$ and we now have an example of a Riemannian manifold with two very different Lie groups acting isometrically and simply-transitively.
\end{proof}

\begin{remark}The group $H^2$ is a solvable group which acts simply-transitively on hyperbolic 2-space and the group $S$ is completely solvable.
\end{remark}

We say a Riemannian solvmanifold is \textit{strongly solvable} if every transitive group of isometries contains a transitive solvable subgroup.  Observe, if a space is strongly solvable, then the only groups acting almost simply transitively are solvable groups.  

\begin{example}  Let $S$ be a unimodular solvable Lie group.  Using any left-invariant metric, $S$ is a strongly solvable space.
\end{example}

To prove the claim,  it suffices to prove this in the case that $S$ is simply-connected.  We begin with the following proposition which gives a criterion for determining when a space is strongly solvable.  See Corollary \ref{cor: almost compl solv admissible implies strongly solvable} for another application.

\begin{prop}\label{prop: linear isom implie strongly solvable} Let $S$ be a simply-connected solvable Lie group with left-invariant metric.  If $Isom(S)$ is linear, then $S$ is a strongly solvable space.
\end{prop}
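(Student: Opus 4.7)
Let $G$ be a transitive group of isometries of $S$. Replacing $G$ by the closure of its identity component---still transitive because $S$ is connected---we may assume $G$ is a closed, connected subgroup of $Isom(S)$, and hence itself linear. Denote the compact isotropy at a chosen point $p\in S$ by $G_p$. Since $S$ is a simply-connected solvable Lie group, $G/G_p\cong S$ is diffeomorphic to Euclidean space, so by the Iwasawa--Malcev--Mostow theorem (a compact subgroup of a connected Lie group with contractible coset space is maximal compact), $G_p$ is a maximal compact subgroup of $G$.

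Next, take a Levi decomposition $\mathfrak{g}=\mathfrak{g}_1\ltimes\mathfrak{g}_2$ with $\mathfrak{g}_2=\mathrm{Rad}(\mathfrak{g})$, and split the semisimple Levi factor as $\mathfrak{g}_1=\mathfrak{g}_{nc}+\mathfrak{g}_c$. A maximal compactly embedded subalgebra of $\mathfrak{g}$ has the form $\mathfrak{k}+\mathfrak{g}_c+\mathfrak{t}$, where $\mathfrak{k}$ comes from an Iwasawa decomposition $\mathfrak{g}_{nc}=\mathfrak{k}+\mathfrak{a}+\mathfrak{n}$ and $\mathfrak{t}$ is a maximal compactly embedded subalgebra of $\mathfrak{g}_2$, chosen compatibly. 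Since all maximal compactly embedded subalgebras of $\mathfrak{g}$ are conjugate, we may arrange $\mathfrak{g}_p=\mathfrak{k}+\mathfrak{g}_c+\mathfrak{t}$. Set
$$\mathfrak{r}:=\mathfrak{a}+\mathfrak{n}+\mathfrak{g}_2,$$
which is a subalgebra (since $\mathfrak{g}_2$ is an ideal) and solvable (since both $\mathfrak{a}+\mathfrak{n}$ and $\mathfrak{g}_2$ are), and let $R\subseteq G$ be the corresponding connected Lie subgroup.

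Using the direct-sum decompositions $\mathfrak{g}=\mathfrak{g}_{nc}\oplus\mathfrak{g}_c\oplus\mathfrak{g}_2$ and $\mathfrak{g}_{nc}=\mathfrak{k}\oplus(\mathfrak{a}+\mathfrak{n})$, a short computation shows $\mathfrak{r}\cap\mathfrak{g}_p=\mathfrak{t}$. Consequently
$$\dim R-\dim(R\cap G_p)=\dim\mathfrak{a}+\dim\mathfrak{n}+\dim\mathfrak{g}_2-\dim\mathfrak{t}=\dim\mathfrak{g}-\dim\mathfrak{g}_p=\dim S,$$
so the orbit $R\cdot p$ is open in $S$. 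By Lemma \ref{lemma: open orbit implies transitive action}, $R$ acts transitively, and since $R$ is solvable, $S$ is strongly solvable. The delicate step is the structural identification of $\mathfrak{g}_p$ with a maximal compactly embedded subalgebra of $\mathfrak{g}$ in the form $\mathfrak{k}+\mathfrak{g}_c+\mathfrak{t}$; once this adaptation is in place, the remainder is a clean dimension count.
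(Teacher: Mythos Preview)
Your argument is correct and shares its core with the paper's proof: both take the Levi decomposition $G=G_1G_2$, the Iwasawa decomposition $G_{nc}=KAN$, and exhibit the solvable subgroup with Lie algebra $\mathfrak a+\mathfrak n+\mathfrak g_2$ as the desired transitive solvable group, finishing with Lemma~\ref{lemma: open orbit implies transitive action}. The difference is in how the open orbit is obtained.

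You work inside $G$ itself: you argue that $G_p$ is a maximal compact subgroup of $G$ (via contractibility of $G/G_p$) and then pin down $\mathfrak g_p$ as a maximal compactly embedded subalgebra of the specific form $\mathfrak k+\mathfrak g_c+\mathfrak t$, after which a dimension count gives openness. This works, but the ``delicate step'' you flag---that $\mathfrak g_p$ really splits along the Levi decomposition---requires knowing that once $\mathfrak k+\mathfrak g_c\subset\mathfrak g_p$, the projection of $\mathfrak g_p$ to $\mathfrak g_1$ cannot exceed the maximal compactly embedded subalgebra $\mathfrak k+\mathfrak g_c$; this is true but deserves a line of justification.

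The paper sidesteps this step entirely by working in the ambient isometry group rather than in $G$. Since $Isom(S)$ is linear, $G_1$ is a linear semisimple group, so $K$ is genuinely compact; hence the compact group $KG_c$ lies in some maximal compact of $Isom(S)$, i.e.\ in $Isom(S)_p$ for some $p$. Because $\mathfrak g=(\mathfrak k+\mathfrak g_c)+(\mathfrak a+\mathfrak n+\mathfrak g_2)$ and $\mathfrak k+\mathfrak g_c$ maps to zero in $T_pS$, the $ANG_2$-orbit through $p$ is open, and one is done. No structural identification of $\mathfrak g_p$ is needed, and no dimension count beyond this surjection. Your approach gives more information about $G_p$; the paper's buys brevity.
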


\begin{proof}
	As $S$ acts simply-transitively,  for any point $p\in S$,  we can write $Isom(S) = S (Isom(S)_p)$, where $Isom(S)_p$ is the isotropy subgroup at $p$.  Further, $S$ being simply-connected implies that the intersection of these two subgroups is trivial and $Isom(S)_p$ is a maximal compact subgroup of $Isom(S)$.

	Let $G$ be a transitive group of isometries of $S$.  We may assume $G$ is connected and decompose it into a Levi decomposition $G=G_1G_2$.  Further, we may write	
		$$G_1 = G_{nc}G_c = ANKG_c$$
Here $G_{nc}$ and $G_c$ are the non-compact and compact factors of $G_1$, respectively, and $G_{nc} = KAN$ is an Iwasawa decomposition.  	As $Isom(S)$ is linear, we see that $G_1$ is a linear semi-simple group and hence  the subgroup $K$ is compact.  Thus, there exists some $p\in S$ such that $KG_c \subset Isom(S)_p$.  Now the solvable subgroup $ANG_2$ of $G$ has an open orbit and hence it acts transitively (see Lemma \ref{lemma: open orbit implies transitive action}),  as desired.
\end{proof}

\begin{prop}\label{prop: isom of unimod solv is linear}
The isometry group of a simply-connected, unimodular, solvable Lie group is linear.
\end{prop}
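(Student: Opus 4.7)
The plan is to use the Gordon--Wilson standard modification procedure to replace $\mathfrak{s}$ with a solvable algebra $\mathfrak{s}''$ in standard position whose structure --- once unimodularity is exploited --- becomes tractable enough to realize the full isometry group as a semi-direct product of a compact group and a simply-connected completely solvable one.

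First, I would apply Theorem~\ref{thm: GW - standard position} together with Lemma~\ref{lemma: standard simply connected solv acts simply transitively} to extract a simply-connected solvable subgroup $S''$ of $Isom(S,g)_0$ in standard position acting simply-transitively. The next step is to check that unimodularity of $\mathfrak{s}$ passes to $\mathfrak{s}''$. Because the modification takes the normal form $\mathfrak{s}''=(Id+\phi)(\mathfrak{s})$ with $\phi$ valued in skew-symmetric derivations satisfying $[\mathfrak{s},\mathfrak{s}]\subset \ker\phi$, a direct bracket-trace computation in the decomposition $\mathfrak{s}=\ker\phi\oplus V$ shows $tr(ad_{\mathfrak{s}''}(X+\phi(X)))=tr(ad_\mathfrak{s} X)=0$, so $\mathfrak{s}''$ is unimodular.

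Second, I would exploit unimodularity of $\mathfrak{s}''$ to force the Levi factor $\mathfrak{g}_1$ of the isometry algebra $\mathfrak{g}$ to be of compact type. With the Lemma~\ref{lemma: properties of f} decomposition $\mathfrak{g}_1=\mathfrak{g}_{nc}+\mathfrak{g}_c$ and Iwasawa $\mathfrak{g}_{nc}=\mathfrak{k}+\mathfrak{a}+\mathfrak{n}$, the compactness of the quotient $\mathfrak{f}/\mathfrak{s}''\simeq N_\mathfrak{l}(\mathfrak{s}'')$ forces the non-compact Iwasawa radical $\mathfrak{a}+\mathfrak{n}$ to sit inside $\mathfrak{s}''$. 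If $\mathfrak{a}\neq 0$, then for a positive $A\in\mathfrak{a}$ we have $tr(ad_\mathfrak{n} A)>0$; using Lemma~\ref{lemma: properties of f}(iv) to control the $\mathfrak{g}_2$-contribution to $tr(ad_{\mathfrak{s}''}A)$ and arguing it cannot cancel the strictly positive $\mathfrak{n}$-trace yields a contradiction with unimodularity. Hence $\mathfrak{a}=\mathfrak{n}=0$ and $\mathfrak{g}_1$ is compact; the standard-position condition then also kills any remaining purely imaginary eigenvalues of $ad_{\mathfrak{s}''}$, so $\mathfrak{s}''$ is completely solvable.

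Finally, the Gordon--Wilson identification for the unimodular completely solvable case (cited in the introduction) gives $Isom(S,g)_0=(Aut(\mathfrak{s}'')\cap O(g))\ltimes S''$, which is linear: the compact factor is linear as a closed subgroup of $O(g)$, $S''$ is linearizable since simply-connected and completely solvable (a classical theorem), and the semi-direct product embeds into the linear holomorph $Aut(S'')\ltimes S''$. The full isometry group is then linear by inducing from this finite-index identity component, using that $\pi_0(Isom(S,g))$ is finite because the isotropy is a compact subgroup of $O(T_pS)$. The main obstacle is the trace computation in the second step: the $\mathfrak{g}_2$-contribution to $tr(ad_{\mathfrak{s}''}A)$ could in principle cancel the strictly positive $\mathfrak{n}$-contribution, so a careful analysis of how $\mathfrak{g}_2$ decomposes as a $\mathfrak{g}_{nc}$-module relative to $\mathfrak{s}''$ is required.
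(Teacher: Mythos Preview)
Your second step contains the gap you yourself flag, and it is a genuine one: for $A\in\mathfrak a$ the restriction $ad\,A|_{\mathfrak s''\cap\mathfrak g_2}$ need not be traceless even though $tr(ad\,A|_{\mathfrak g_2})=0$, since the complement of $\mathfrak s''\cap\mathfrak g_2$ in $\mathfrak g_2$ (sitting inside $N_\mathfrak l(\mathfrak s'')$) can carry nonzero $\mathfrak a$-weights. So nothing prevents the positive $\mathfrak n$-trace from being cancelled, and the argument as written does not force $\mathfrak a=0$. Resolving this would require essentially the full Gordon--Wilson analysis of the unimodular case, which is what you are trying to avoid.

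More to the point, the detour through ``Levi factor compact $\Rightarrow$ $\mathfrak s''$ completely solvable'' is unnecessary. The Gordon--Wilson identification you invoke at the end (\cite[Section~4]{GordonWilson:IsomGrpsOfRiemSolv}) does not require complete solvability: for \emph{any} unimodular solvable $S$, after passing to standard position one already has $Isom(S)_0=(Aut(\mathfrak s'')\cap O(g))\ltimes S''$. The paper's proof simply quotes this, observes that a Levi factor of $Isom(S)_0$ therefore sits inside the linear group $Aut(\mathfrak s'')\cap O(g)$, and combines this with Lemma~\ref{lemma: Rad(G,G) is s.c.} and Goto's criterion (Theorem~\ref{thm: Goto}) to conclude. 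In other words, the portion of your outline that works (your first and last steps, once you drop the completely-solvable hypothesis on $\mathfrak s''$) is already the paper's argument; the problematic middle can be deleted entirely.
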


This proposition and the preceding one together show that unimodular solvmanifolds are strongly solvable.  Before proving the proposition, we prove the following general lemma which will be used later as well.

\begin{lemma}\label{lemma: Rad(G,G) is s.c.}  Let $G$ denote the connected isometry group of a simply-connected solvmanifold.  Then   $Rad(G,G)$ is simply-connected.
\end{lemma}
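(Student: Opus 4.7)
The plan is to show that $Rad(G,G)$ is a connected Lie subgroup of a simply-connected nilpotent subgroup of $G$; simple-connectedness of $Rad(G,G)$ then follows from general theory.

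First, I will identify the Lie algebra of $Rad(G,G)$. Taking a Levi decomposition $\mathfrak{g} = \mathfrak{g}_1 + \mathfrak{g}_2$, the derived algebra is $[\mathfrak{g},\mathfrak{g}] = \mathfrak{g}_1 + [\mathfrak{g}_1,\mathfrak{g}_2] + [\mathfrak{g}_2,\mathfrak{g}_2]$ with $\mathfrak{g}_1$ as a Levi factor. Since $\mathfrak{g}_1$ acts on the ideal $\mathfrak{g}_2$ by derivations, Lemma \ref{lemma: derivation of solvabe goes to nilradical} gives $[\mathfrak{g}_1,\mathfrak{g}_2] \subseteq nilrad(\mathfrak{g}_2) = nilrad(\mathfrak{g})$, while $[\mathfrak{g}_2,\mathfrak{g}_2]$ is nilpotent as the commutator of a solvable algebra. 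Hence $rad([\mathfrak{g},\mathfrak{g}]) \subseteq nilrad(\mathfrak{g})$.

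Next, I will place this inside the simply-connected solvable group in standard position. By Lemma \ref{lemma: nilrad(G) < S}, $nilrad(\mathfrak{g}) \subseteq \mathfrak{s}''$, and by Lemma \ref{lemma: standard simply connected solv acts simply transitively}, the corresponding subgroup $S''$ of $G$ is simply-connected. As a nilpotent ideal of the subalgebra $\mathfrak{s}''$, $nilrad(\mathfrak{g})$ sits inside $nilrad(\mathfrak{s}'')$. The connected subgroup $N$ of $S''$ with Lie algebra $nilrad(\mathfrak{s}'')$ is the nilradical of $S''$, and being the nilradical of a simply-connected solvable Lie group, $N$ is closed and simply-connected.

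Finally, since $Rad(G,G)$ has Lie algebra contained in $nilrad(\mathfrak{s}'')$, it is a connected Lie subgroup of the simply-connected nilpotent group $N$. In such a group the exponential map is a diffeomorphism, and by the Baker--Campbell--Hausdorff formula each subalgebra corresponds via $\exp$ to a closed simply-connected subgroup; in particular $Rad(G,G)$ is simply-connected. The main subtlety I anticipate is justifying that the nilradical of a simply-connected solvable Lie group is itself simply-connected, and that connected subgroups of simply-connected nilpotent Lie groups are simply-connected. Both are standard facts, but warrant explicit citation.
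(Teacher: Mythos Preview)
Your proof is correct and follows essentially the same route as the paper: both show $Rad(G,G) \subset Nilradical(G) \subset S''$ (the paper citing Jacobson for nilpotency of $Rad(G,G)$, you computing it directly via the Levi decomposition and Lemma~\ref{lemma: derivation of solvabe goes to nilradical}), and then deduce simple-connectedness from that of $S''$. The only minor difference is in the final step: the paper observes more directly that \emph{every} connected subgroup of a simply-connected solvable Lie group is simply-connected (since such a group is diffeomorphic to $\mathbb{R}^n$), which spares you the detour through the nilradical of $S''$ and the BCH argument.
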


\begin{proof}
   Let $S''$ denote the subgroup of $G$ in standard position.  As our solvmanifold is simply-connected, so is $S''$ (Lemma \ref{lemma: standard simply connected solv acts simply transitively}).  Recall, that the group $Rad(G,G)$ is nilpotent (see page 91, Cor. 2, of \cite{Jacobson:LieAlgebras}) and so 
	$$Rad(G,G) < Nilradical(G) < S''$$
See Lemma \ref{lemma: nilrad(G) < S} for the second inclusion. 

Since every Lie group is diffeomorphic to $C \times \mathbb R^n$, where $C$ is a maximal compact subgroup, we see that a solvable group is simply-connected if and only if every subgroup is simply-connected.  Thus $Rad(G,G)$ is simply-connected.
\end{proof}

\begin{proof}[Proof of Prop.~\ref{prop: isom of unimod solv is linear}]  
As discussed in Section \ref{sec: linearizing Lie groups}, it suffices to linearize the connected isometry group $G = Isom(S)_0$.   By  the result of Goto  (Theorem  \ref{thm: Goto}) and the lemma above, to prove that $G$ is linear it suffice to prove linearity of the Levi factor of $G$. 

Let $S''$ denote the solvable group in standard position in $G$.  Since $S$ is unimodular, one knows precisely what the isometry group of $S$ is, namely,
	$$Aut(\mathfrak s'') \cap O(\mathfrak s) \ltimes S''$$
This is the main result of \cite[Section 4]{GordonWilson:IsomGrpsOfRiemSolv}.  Now we see that the Levi factor of $Isom(S)$ can be chosen to be a subgroup of $Aut(\mathfrak s'') \cap O(\mathfrak s)$, which is clearly linear. 

\end{proof}

\begin{remark}
	We point out to the unfamiliar reader that Gordon and Wilson have given a procedure for reconstructing the isometry group of a unimodular solvable Lie group with left-invariant metric.  In this case, one applies the standard modification algorithm twice and then computes the algebraic isometry group for this standard modification.  See Section 4 of \cite{GordonWilson:IsomGrpsOfRiemSolv}.
\end{remark}

\subsection{Homogeneous quotients of strongly solvable spaces}

\begin{prop}\label{prop: homog space covered by strong solv is solv} Let $G/H$ be a Riemannian homogeneous space whose universal cover is a strongly solvable space $S$.  Then $G/H$ is a solvmanifold.
\end{prop}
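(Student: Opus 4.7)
The plan is to exploit the strong solvability hypothesis by lifting the transitive $G$-action on $G/H$ to a transitive action on the universal cover $S$ by a covering group of $G$, then pushing a transitive solvable subgroup back down. First I would reduce to the case of $G$ connected: since $G/H$ admits a connected universal cover it is itself connected, and the orbit of the identity component $G_0$ through any point is open (hence closed in $G/H$), so $G_0$ acts transitively by isometries. Thus without loss of generality $G$ is connected.

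Next, I would lift the $G$-action on $G/H$ to $S$. Because the covering map $p\colon S\to G/H$ is a local isometry, the standard lifting theory for smooth actions on covering spaces gives a smooth isometric action of the universal cover $\widetilde G$ on $S$ such that $p$ is equivariant with respect to the covering homomorphism $\widetilde G\to G$. Let $\overline G$ denote the image of $\widetilde G$ in $\mathrm{Isom}(S)$; it is a connected Lie group covering $G$, and since $p$ is surjective and equivariant while $G$ is transitive on $G/H$, the group $\overline G$ is transitive on $S$.

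Now I apply the strongly solvable hypothesis: the transitive subgroup $\overline G < \mathrm{Isom}(S)$ contains a transitive solvable subgroup $\overline R$. Let $R$ be the image of $\overline R$ under the covering homomorphism $\overline G\to G$. Then $R$ is solvable, being a homomorphic image of a solvable group, and $R$ acts on $G/H$ by isometries as a subgroup of $G$. Transitivity of $R$ on $G/H$ follows from transitivity of $\overline R$ on $S$: given $x,y\in G/H$, choose lifts $\tilde x,\tilde y\in S$, pick $\bar r\in\overline R$ with $\bar r\cdot\tilde x=\tilde y$, and then its image in $R$ carries $x$ to $y$ by equivariance of $p$. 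Hence $G/H$ admits a transitive solvable group of isometries and is a solvmanifold. The only nontrivial step is the setup of the lifted action and verification that $\overline G$ is genuinely a Lie group cover of $G$ sitting inside $\mathrm{Isom}(S)$; this is standard and is already anticipated in the introductory discussion preceding the definition of strongly solvable.
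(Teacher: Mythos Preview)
Your proof is correct and follows essentially the same approach as the paper's: lift the transitive $G$-action to a transitive action of a cover $\overline G$ on $S$, invoke strong solvability to extract a transitive solvable subgroup, and push it back down to $G/H$. Your version simply fills in details (the reduction to connected $G$, the explicit equivariance argument for transitivity downstairs) that the paper leaves implicit in its four-line argument.
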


\begin{proof}
	Although the action of $G$ does not lift to the universal cover $S$, there exists a cover $\overline G$ of $G$ which acts on $S$ by isometries.  Since $G$ acts transitively on $G/H$, $\overline G$ acts transitively on $S$.  	By hypothesis, $\overline G$ contains a solvable subgroup $R$ which acts transitively on $S$.  However, $R$ also acts (almost effectively) by isometries on $G/H$.  Thus, $G/H$ is a solvmanifold.
\end{proof}

\subsection{Homogeneous quotients which are not solvmanifolds}
Here we present an example of a solvmanifold with a homogeneous quotient which is not a solvmanifold.  We are grateful to Carolyn Gordon for graciously providing this example to us.

\begin{example}\label{ex: non-solvmanifold quotient of solvmanifold}  There exists a Riemannian metric on $SL_2(\mathbb R) \times T^2$ such that it is not a solvmanifold but it is covered by a solvmanifold.
\end{example}

Consider the group $G=SL_2(\mathbb R) \times \mathbb R^2$.  As $\mathbb Z^2 \subset \mathbb R^2$ is a central subgroup, any left-invariant metric on $SL_2(\mathbb R) \times \mathbb R^2$ gives rise to a left-invariant metric on $SL_2(\mathbb R) \times T^2$.  We will construct a left-invariant metric on $SL_2(\mathbb R) \times \mathbb R^2$ whose isometry group contains a transitive solvable subgroup, but so that the isometry group of $SL_2(\mathbb R) \times T^2$ does not.

Endow $\mathfrak g = \mathfrak{sl}_2(\mathbb R)$ with an $Ad~SO(2)$-invariant inner product.  Thus the Cartan decomposition $\mathfrak k \oplus \mathfrak p$, into skew-symmetric and symmetric parts, is an orthogonal decomposition.   
Let $\{e_1,e_2\}$ be an orthonormal basis of $\mathbb R^2$ and $\{X_1,X_2\}$  an orthonormal basis of $\mathfrak p$ where  $X_2 = ad~U(X_1)$ for some $U\in \mathfrak{so}(2)$.  Now we extend our inner product so that the orthogonal complement of $\mathfrak{sl}_2(\mathbb R)$ is spanned by $\{X_1 + e_1, X_2+e_2\}$.  Denote our choice of inner product by $\ip{\cdot , \cdot }$. Observe that $\mathfrak k \perp \mathbb R^2$ while $\mathfrak p \not \perp \mathbb R^2$.

Let $A:\mathbb R^2 \to \mathbb R^2$ denote the skew-symmetric map sending $e_1 \mapsto -e_2$ and $e_2 \mapsto e_1$.  Then for $U\in \mathfrak k$ as above, we see that $D= ad~U + A$ is a skew-symmetric derivation of $\mathfrak g$, relative to $\ip{\cdot,\cdot}$, and $\mathfrak{h} = \mathbb R D \ltimes \mathfrak g$ is thus a subalgebra of the Lie algebra of the isometry group of $\{G,\ip{\cdot,\cdot} \}$.

\begin{lemma}  Denote by $H$ the connected group of isometries of $\{G,\ip{\cdot,\cdot} \}$ with Lie algebra $\mathfrak h$.  Then $H$ is the connected component of the identity of the isometry group.
\end{lemma}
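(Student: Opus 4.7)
The strategy is to realize $(G,\ip{\cdot,\cdot})$ as a Riemannian solvmanifold and then apply the Gordon--Wilson framework to identify the full isometry algebra.

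First, I would exhibit a transitive solvable subgroup inside $H$. Using the Iwasawa decomposition $\mathfrak{sl}_2(\mathbb R) = \mathbb R U + \mathfrak a + \mathfrak n$, set
\[\mathfrak s' := \mathbb R(D - U) + \mathfrak a + \mathfrak n + \mathbb R^2 \subset \mathfrak h.\]
Because $D|_{\mathfrak{sl}_2} = \mathrm{ad}\,U$, the element $D - U$ commutes with all of $\mathfrak{sl}_2$ (so, in particular, with $\mathfrak a + \mathfrak n$) and restricts to the rotation $A$ on $\mathbb R^2$; hence $\mathfrak s'$ is a $5$-dimensional solvable subalgebra. The evaluation map at $e$ sends $\mathfrak s'$ surjectively onto $T_e G$ since $(D-U)(e) = -U$ together with the Killing fields coming from $\mathfrak a + \mathfrak n + \mathbb R^2$ fills $\mathfrak g$. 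Thus the corresponding simply-connected subgroup $S'$ acts transitively on $G$ by isometries, realizing $(G,\ip{\cdot,\cdot})$ as a Riemannian solvmanifold.

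Second, I would apply the standard modification algorithm (Theorem \ref{thm: GW - standard position}) to $\mathfrak s'$, producing a solvable subalgebra $\mathfrak s''$ of the isometry algebra $\mathfrak{iso}$ in standard position. Lemma \ref{lemma: properties of f} then describes $\mathfrak{iso}$ via its Iwasawa and Levi decompositions as
\[\mathfrak{iso} = \mathfrak k + \mathfrak a_\mathfrak{iso} + \mathfrak n_\mathfrak{iso} + \mathfrak g_c + \mathfrak g_2.\]
Comparing with $\mathfrak h$, whose Levi decomposition is $\mathfrak h = \mathfrak{sl}_2 \times (\mathbb R(D-U) \ltimes \mathbb R^2)$, the goal becomes to match these pieces.

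Third, the key computation is that the space of skew-symmetric derivations of $(\mathfrak g,\ip{\cdot,\cdot})$ is exactly the line $\mathbb R D$. Every derivation of the direct sum $\mathfrak g = \mathfrak{sl}_2 \oplus \mathbb R^2$ has the form $\mathrm{ad}\,V \oplus B$ with $V \in \mathfrak{sl}_2$ and $B \in \mathfrak{gl}(\mathbb R^2)$; skewness on the $\mathfrak{sl}_2$-factor forces $V \in \mathfrak k = \mathbb R U$, skewness on $\mathbb R^2$ forces $B \in \mathbb R A$, and skewness across the cross-terms $\ip{X_i,e_j} = -\delta_{ij}$ coming from $\mathfrak{sl}_2^{\perp} = \mathrm{span}\{X_1+e_1,X_2+e_2\}$ then pins the two coefficients against each other, yielding precisely $\mathbb R D$. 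Combining this with the Gordon--Wilson description and with Lemma \ref{lemma: nilrad(G) < S} (which forces the central $\mathbb R^2$ to sit inside the nilradical of $\mathfrak{iso}$), one identifies the Levi factor of $\mathfrak{iso}$ with $\mathfrak{sl}_2$ (so $\mathfrak g_c = 0$) and its solvable radical with $\mathbb R(D-U) \ltimes \mathbb R^2$, giving $\mathfrak{iso} = \mathfrak h$.

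The main obstacle lies in Step~3: in the non-unimodular setting, the algebraic isotropy (skew-symmetric derivations) can be a proper subset of the full isotropy at $e$, so the derivation count alone does not close the argument. Bridging this gap is precisely what the standard modification machinery is designed to do, together with the observation that $\mathbb R^2 = \mathfrak z(\mathfrak g)$ sits inside the nilradical of $\mathfrak{iso}$ and so severely constrains the possible enlargements of $\mathfrak h$.
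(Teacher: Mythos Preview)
Your Step~1 matches the paper exactly: the same solvable subalgebra $\mathfrak s' = \mathbb R(D-U) + \mathfrak a + \mathfrak n + \mathbb R^2$ is shown to act transitively, realizing $(G,\ip{\cdot,\cdot})$ as a solvmanifold. The divergence, and the genuine gap, is in Step~3.

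You compute the skew-symmetric derivations of the \emph{original} metric Lie algebra $(\mathfrak g,\ip{\cdot,\cdot}) = (\mathfrak{sl}_2 \oplus \mathbb R^2,\ip{\cdot,\cdot})$ and obtain $\mathbb R D$. But the Gordon--Wilson machinery runs on the \emph{solvable} metric Lie algebra $(\mathfrak s', \ip{\cdot,\cdot}')$, where $\ip{\cdot,\cdot}'$ is the left-invariant metric on $S'$ induced by its simply-transitive action on $G$. The object that governs both the standard modification and the normalizer $N_{\mathfrak{iso}}(\mathfrak s')$ is $N_\mathfrak l(\mathfrak s')$, not $N_\mathfrak l(\mathfrak g)$. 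Your computation of $N_\mathfrak l(\mathfrak g)$ only bounds the algebraic isometries of $G$ as a Lie group; it neither feeds into the standard modification of $\mathfrak s'$ nor bounds $\mathfrak{iso}$ from above, which is precisely why your argument stalls and you are left with the acknowledged obstacle. The remark about non-unimodularity is a red herring: the issue is simply that you are computing derivations of the wrong metric Lie algebra.

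The paper closes this gap with one clean stroke: it shows that $(\mathfrak s', \ip{\cdot,\cdot}')$ admits \emph{no} skew-symmetric derivations whatsoever. Hence $\mathfrak s'$ is already in standard position (your Step~2 is vacuous), and $N_{\mathfrak{iso}}(\mathfrak s') = N_\mathfrak l(\mathfrak s') \ltimes \mathfrak s' = \mathfrak s'$. Gordon--Wilson's Theorem~1.11 then identifies the connected isometry group directly as $H$. So the substantive content is not a black-box invocation of the modification algorithm but the explicit verification that $N_\mathfrak l(\mathfrak s') = 0$; once you have that, the rest collapses.
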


\begin{proof}
	To show this result, we  apply \cite[Theorem 1.11]{GordonWilson:IsomGrpsOfRiemSolv} by finding a solvable subgroup in standard position.  First, we must show that $\{G,\ip{\cdot,\cdot}\}$ is indeed a solvmanifold.
	
	Let $\mathfrak b$ denote the subalgebra of $\mathfrak h$ spanned by $U-D$.  This algebra commutes with $\mathfrak{sl}_2(\mathbb R)$ and normalizes $\mathbb R^2$.  Denote the connected subgroup of $H$ with Lie algebra $\mathfrak b$ by $B$.  Further, let $SL_2(\mathbb R) = KS$ be an Iwasawa decomposition where $K$ is the subgroup with Lie algebra $\mathfrak k$.  Observing that the isotropy subgroup of $H$ is contained in $B\times K$ (which is isomorphic to $SO(2) \times SO(2)$), we see that the group $B\ltimes \mathbb R^2 \times S$ acts transitively , by isometries, on $G$ - i.e., $\{G,\ip{\cdot, \cdot}\}$ is a solvmanifold.	 Here the $B$ action on the $SL_2(\mathbb R)$ factor is just right multiplication by $K$ (after inverting).
	
	To finish, we need to find a subgroup in standard position and compute its normalizer in the isometry group.  As $B\ltimes \mathbb R^2 \times S$ acts transitively on $G$, it inherits  a left-invariant metric $\ip{\cdot,\cdot}$. (As these two spaces are isometric, we abuse notation and reuse the same symbol for the inner product on $\mathfrak b \ltimes \mathbb R^2 + \mathfrak s$.)  Observe that the metric Lie algebra $\{\mathfrak b\ltimes \mathbb R^2 + \mathfrak s,\ip{\cdot,\cdot}\}$ has no skew-symmetric derivations.  Therefore, by the standard modification algorithm \cite[Theorem 3.5]{GordonWilson:IsomGrpsOfRiemSolv}, the group $B\ltimes \mathbb R^2 \times S$ is in standard position in the isometry group of $\{G,\ip{\cdot,\cdot}\}$.  Furthermore, as $\{\mathfrak b\ltimes \mathbb R^2 + \mathfrak s,\ip{\cdot,\cdot}\} $ does not admit any skew-symmetric derivations, $B\ltimes \mathbb R^2 \times S$ is its own normalizer in the connected component of the isometry group of $\{G,\ip{\cdot, \cdot}\}$ and we see that $H$ is indeed the connected component of the isometry group by \cite[Theorem 1.11]{GordonWilson:IsomGrpsOfRiemSolv}.
	
\end{proof}

As the group $SL_2(\mathbb R) \times T^2 = G/\mathbb Z^2$ is covered by $G$, any transitive group action on $G/\mathbb Z^2$ can be lifted (by passing to a cover of the group acting) to an action on $G$.  Thus, if $G/\mathbb Z^2$ were a solvmanifold, then a cover of the solvable group would act on $G$.  Further, the group acting on $G$ would have to leave the set $\mathbb Z^2$ fixed.  While it is clear that the group $B\ltimes \mathbb R^2 \times S$ does not leave $\mathbb Z^2$ fixed, to ensure that $G/\mathbb Z^2$ is not a solvmanifold, we must prove that no transitive solvable subgroup of $H$ is able to leave $\mathbb Z^2$ fixed.

By Theorem 1.11 of \cite{GordonWilson:IsomGrpsOfRiemSolv}, we see that any solvable subgroup of $H$ which acts transitively must be conjugate to $B\ltimes \mathbb R^2 \times S$.  However, none of these is able to leave $\mathbb Z^2$ fixed and so the homogeneous quotient $G/\mathbb Z^2$ is not a solvmanifold.

\subsection{Testing for  solvability}
In practice, one might be handed a homogeneous space and need to know if it is a solvmanifold.  We give a simple procedure which is guaranteed to detect being a solvmanifold, as long as $\widetilde{G/H}$ is isometric to a solvmanifold with linear isometry group.  We know of no general technique that is guaranteed to work without the aforementioned condition.

\begin{prop}\label{prop: test for isometric to solv}  Let $G$ be a  Lie group with Levi decomposition $G_1G_2$ and $H$ a closed, Ad-compact, connected subgroup.  Denote the Iwasawa subalgebra of $\mathfrak g_1$ by $i(\mathfrak g_1)$.  If the following equation holds
	\begin{equation}\label{eqn: test for isometric to solv}
	i(\mathfrak g_1) + \mathfrak g_2 + \mathfrak h = \mathfrak g
	\end{equation}
then $G/H$ is a solvmanifold (relative to any $G$-invariant metric).  Further, consider $G/H$ with a $G$-invariant metric.  If $G/H$ is  locally isometric to a solvmanifold with linear isometry group, then Eqn.~\ref{eqn: test for isometric to solv} holds.
\end{prop}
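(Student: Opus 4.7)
The plan is to prove the two implications separately. For the forward direction, I would first verify that $\mathfrak{r} := i(\mathfrak{g}_1) + \mathfrak{g}_2$ is a solvable subalgebra of $\mathfrak{g}$: it is a subalgebra because $\mathfrak{g}_2$ is an ideal and $i(\mathfrak{g}_1) = \mathfrak{a} + \mathfrak{n}$ is closed under the bracket, and it is solvable because both summands are solvable with one an ideal. Let $R \subset G$ denote the connected subgroup with Lie algebra $\mathfrak{r}$. The hypothesis $\mathfrak{r} + \mathfrak{h} = \mathfrak{g}$ then forces the $R$-orbit through the base point $eH$ to be open in $G/H$. Since $H$ is Ad-compact, $G/H$ carries a $G$-invariant Riemannian metric and is a complete homogeneous space, so the completeness argument from the proof of Lemma \ref{lemma: open orbit implies transitive action} shows that this open $R$-orbit is also closed, hence all of $G/H$. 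Thus $R$ acts transitively, and $G/H$ is a solvmanifold; the argument uses only Ad-compactness of $H$, so it works for any $G$-invariant metric.

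For the reverse direction, suppose $G/H$ is locally isometric to a simply-connected solvmanifold $S$ with $Isom(S)$ linear. As explained in the introduction, the $G$-action on $G/H$ lifts via the universal covering $\pi : S \to G/H$ to an isometric action of some cover $\overline{G}$ of $G$ on $S$; this $\overline{G}$ shares the Lie algebra $\mathfrak{g}$ with $G$ and sits transitively inside $Isom(S)$. I would then reproduce the key step from the proof of Proposition \ref{prop: linear isom implie strongly solvable}: with the Levi decomposition $\mathfrak{g} = \mathfrak{g}_1 + \mathfrak{g}_2$ and the further splitting $\mathfrak{g}_1 = \mathfrak{k} + \mathfrak{a} + \mathfrak{n} + \mathfrak{g}_c$, linearity of $Isom(S)$ implies that the connected semisimple subgroup of $\overline{G}$ with Lie algebra $\mathfrak{g}_1$ inherits a faithful representation and is therefore linear, so $K$ is compact. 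Consequently $KG_c$ lies inside a maximal compact subgroup of $Isom(S)$, which is the isotropy at some point $p \in S$, and the Lie algebra $\mathfrak{h}_p$ of that isotropy contains $\mathfrak{k} + \mathfrak{g}_c$. This forces $i(\mathfrak{g}_1) + \mathfrak{g}_2 + \mathfrak{h}_p = \mathfrak{g}$.

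To transport this equality back to the original $\mathfrak{h}$, I would use that $\mathfrak{h}_p$ and $\mathfrak{h}$ are $\overline{G}$-conjugate (both being isotropy algebras for the transitive $\overline{G}$-action), that $\mathfrak{g}_2$ is stable under inner automorphisms of $\mathfrak{g}$, and that inner automorphisms of $\mathfrak{g}$ permute the Iwasawa subalgebras (they send $i(\mathfrak{g}_1)$ to another Iwasawa subalgebra, possibly of a different Levi factor). Since the hypothesis of the proposition depends only on the conjugacy class of $i(\mathfrak{g}_1)$, applying the relevant inner automorphism then converts the equality into $i(\mathfrak{g}_1) + \mathfrak{g}_2 + \mathfrak{h} = \mathfrak{g}$. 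The main obstacle I anticipate is precisely this last bookkeeping step together with the rigorous construction of the cover $\overline{G}$: I must confirm that the lifted action exists as a genuine Lie group action on $S$, and that the conjugation taking $\mathfrak{h}_p$ to $\mathfrak{h}$ can be absorbed into the choice of Iwasawa subalgebra without disturbing $\mathfrak{g}_2$, so that $\mathfrak{h}$ and $i(\mathfrak{g}_1)$ end up paired in the same equation.
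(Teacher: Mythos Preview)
Your forward direction is identical to the paper's. For the reverse direction your overall strategy (lift to $S$, use linearity of $Isom(S)$ to make $K$ compact, conclude $KG_c$ lies in an isotropy) is exactly the paper's, but your final step is more roundabout than necessary. The paper avoids your ``main obstacle'' entirely with the following observation: once $KG_c$ fixes some point $p$, the solvable group $ANG_2$ has an open orbit at $p$, so by Lemma~\ref{lemma: open orbit implies transitive action} it acts \emph{transitively}. But then the $ANG_2$-orbit through the base point $eH$ is also all of $G/H$, in particular open there, which immediately yields $i(\mathfrak g_1)+\mathfrak g_2+\mathfrak h=\mathfrak g$ at $eH$ --- no conjugation of $\mathfrak h_p$ back to $\mathfrak h$, and hence no bookkeeping with the Iwasawa subalgebra, is required. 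Your conjugation argument can be pushed through (since $i(\mathfrak g_1)+\mathfrak g_2$ is independent of the choice of Levi factor and the validity of the equation is preserved under conjugating the Iwasawa piece), but the transitivity trick is cleaner and is precisely what the paper does.

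One place where your write-up is actually more careful than the paper's: you explicitly pass to the universal cover $S$ via a covering group $\overline G$ of $G$, whereas the paper simply writes ``suppose $G/H$ is a simply-connected solvmanifold with linear isometry group'' and replaces $G$ by its effective image in $Isom(G/H)$. Since the equation lives entirely at the Lie algebra level and is unchanged under covers and effective quotients, both maneuvers are legitimate, but yours makes the reduction explicit.
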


\begin{proof}
If Eqn.~\ref{eqn: test for isometric to solv} holds, then the solvable subgroup of $G$ with Lie algebra $i(\mathfrak g_1) + \mathfrak g_2$ has an open orbit.  By Lemma \ref{lemma: open orbit implies transitive action}, said group acts transitively and $G/H$ is a solvmanifold.

Now suppose that $G/H$ is a simply-connected solvmanifold with linear isometry group.  While $G$ might not be a subgroup of $Isom(G/H)$ (as the action might only be almost effective), some quotient of $G$ is a subgroup.  We replace $G$ by this possibly non-simply-connected quotient.

Let $G_1$ be a Levi factor of $G$ which is a subgroup of the linear group $Isom(G/H)$.  Decompose $G_1 = G_{nc}G_c$ as a product of semi-simple ideals of non-compact and compact, respectively.  Since $G_1$ is linear, $G_{nc}$ has an Iwasawa decomposition $KAN$ where $K$ is compact.  By the arguments given in the proof of Proposition \ref{prop: linear isom implie strongly solvable}, we see that $KG_c$ stabilizes some point $p\in G/H$.  Now the group $ANG_2$ acts transitively which implies the orbit at $eH$ is open which implies Eqn.~\ref{eqn: test for isometric to solv} holds.

\end{proof}

\section{Proofs of main theorems}  
\label{sec:proof of main theorems}

We begin by proving the the first half of Theorem \ref{thm: admissible almost compl solv has linear isometry group and reconstructable from metric lie algebra data}, which states the following.

\begin{thm}\label{thm: linear isom grp}
Let $S$ be a simply-connected, almost completely solvable Lie group with left-invariant metric $g$.  Denote the isometry group by $Isom(S,g)$.  If $\mathfrak{z(s)} \subset [\mathfrak s,\mathfrak s]$, then $Isom(S,g)$ is linear.  
\end{thm}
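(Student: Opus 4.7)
The plan is to show that the identity component $G := Isom(S,g)_0$ is linear, from which linearity of the full (finite-component) isometry group follows by standard extension arguments. By Goto's theorem (Theorem \ref{thm: Goto}), linearity of $G$ is equivalent to simple-connectedness of $Rad(G,G)$ together with linearity of a Levi factor $G_1$. The first ingredient is supplied by Lemma \ref{lemma: Rad(G,G) is s.c.}. Decomposing $G_1 = G_{nc} G_c$ with $G_c$ compact (hence automatically linear) and $G_{nc}$ semi-simple with only non-compact simple factors, the problem reduces to showing that $Z(G_{nc})$ is finite, which is the standard criterion for linearity of a connected semi-simple Lie group of non-compact type.

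To control $Z(G_{nc})$, I would use the Gordon--Wilson machinery recorded in Section \ref{sec:prereq isom grp solvmfld}. Let $\mathfrak s'' \subset \mathfrak g$ be the solvable subalgebra in standard position. Fix an Iwasawa decomposition $\mathfrak g_{nc} = \mathfrak k + \mathfrak a + \mathfrak n$; by Lemma \ref{lemma: g2 in zNls + s}, $\mathfrak a + \mathfrak n \subset \mathfrak s''$ and $\mathfrak m = N_\mathfrak k(\mathfrak a) \subset N_\mathfrak g(\mathfrak s'')$. For $z \in Z(G_{nc})$, centrality forces $z$ to commute with $AN$ and therefore to preserve the simply-transitive action of $S''$ on $S$ up to compact isotropy. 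The admissibility hypothesis enters via the chain: Theorem \ref{thm: std. mod. preserves almost compl. solv.} shows that $\mathfrak s''$ is again almost completely solvable, and Theorem \ref{thm: admissible almost compl solv impies center in [r,r]} then gives $\mathfrak{z(s'')} \subset [\mathfrak r, \mathfrak r]$, where $\mathfrak r = \mathfrak z(N_\mathfrak l(\mathfrak s)) \ltimes \mathfrak s$ is solvable. Corollary \ref{cor:reductive elements of commutator of solvable} kills any fully reducible element of $[\mathfrak r, \mathfrak r]$; in particular no non-trivial skew-symmetric $S''$-correction to $z$ is available, which I expect to drive $z$ into a single compact isotropy subgroup $G_p$. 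Since $Z(G_{nc})$ is simultaneously discrete (as the center of a semi-simple group) and contained in the compact $G_p$, it must be finite.

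The main obstacle will be the last step of the previous paragraph: making rigorous how the algebraic vanishing of fully reducible elements in $\mathfrak{z(s'')}$ forces the discrete set $Z(G_{nc})$ into a single compact isotropy fiber, as opposed to merely constraining its Lie-algebra behavior (the Lie algebra of $Z(G_{nc})$ is automatically zero). This is precisely the rigidity that fails in Example \ref{ex: bad metric on H^2 x R}, where non-admissibility permits $\widetilde{SL_2(\mathbb R)}$ with its infinite $\mathbb Z$-center to appear as a Levi factor of the isometry group of a solvmanifold. The admissibility hypothesis $\mathfrak{z(s)} \subset [\mathfrak s, \mathfrak s]$ is calibrated precisely to exclude this pathology, and Theorem \ref{thm: admissible almost compl solv impies center in [r,r]} serves as the bridge permitting the hypothesis to be invoked inside the standard-position description of the isometry group, where the actual geometric argument must take place.
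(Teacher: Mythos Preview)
Your reduction strategy is correct and matches the paper's: reduce to $G_0$, invoke Goto's theorem, handle $Rad(G,G)$ via Lemma~\ref{lemma: Rad(G,G) is s.c.}, and reduce linearity of $G_1$ to showing $K(G_{nc})$ is compact (equivalently $Z(G_{nc})$ is finite). You have also correctly identified the role of Theorems~\ref{thm: std. mod. preserves almost compl. solv.} and~\ref{thm: admissible almost compl solv impies center in [r,r]} together with Corollary~\ref{cor:reductive elements of commutator of solvable}. However, the gap you flag in your last paragraph is genuine and your proposal does not contain the idea that closes it.

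The missing ingredient is a bootstrapping step. Lemma~\ref{lemma: Rad(G,G) is s.c.} already suffices, via Proposition~\ref{prop: existence of almost faithful representation}, to produce an \emph{almost faithful} representation $\rho : G \to GL(V)$ \emph{before} any use of admissibility or almost complete solvability. The paper then works inside the linear group $\rho(G)$, where compactness of $\rho(K(G_{nc}))$, $\rho(M)$, $\rho(G_c)$, etc.\ is automatic and Mostow's conjugacy of maximal fully reducible subalgebras (Theorem~\ref{thm: mostow - mfr are conjugate}) applies. After conjugating by elements of $F$ (which preserve $\mathfrak s''$), the hypotheses enter exactly where you expected---to show that $(\rho(\mathfrak g_c) + mc(\rho(\mathfrak g_2)) + \rho(\mathfrak m)) \cap \rho(\mathfrak s'') = 0$---and a dimension count then yields the Lie-algebra identity $\mathfrak g_p = \mathfrak k(\mathfrak g_{nc}) + N_\mathfrak l(\mathfrak s'')$ for a suitably chosen $p$. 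This is a purely infinitesimal statement, so it integrates: the connected group $K(G_{nc})$ lies in the compact isotropy $G_p$. Your attempt to place the discrete set $Z(G_{nc})$ directly into $G_p$ bypasses the one step---pushing into an auxiliary linear group---that converts the Lie-algebra vanishing into a group-level conclusion.
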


Using Proposition \ref{prop: linear isom implie strongly solvable}, we immediately have the following.

\begin{cor}\label{cor: almost compl solv admissible implies strongly solvable} Let $S$ be a simply-connected, almost completely solvable Lie group such that $\mathfrak{z(s)} \subset [\mathfrak s,\mathfrak s]$.  For any left-invariant metric $g$, $(S,g)$ is strongly solvable.
\end{cor}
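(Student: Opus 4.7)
The plan is to verify the hypotheses of Goto's Theorem (Theorem \ref{thm: Goto}) for the connected component $G_0 = Isom(S,g)_0$. Since $Rad(G_0,G_0)$ is simply-connected by Lemma \ref{lemma: Rad(G,G) is s.c.}, the task reduces to showing the Levi factor of $G_0$ is linearizable. Decomposing the Levi as $\mathfrak g_1 = \mathfrak g_{nc} + \mathfrak g_c$ into its non-compact and compact ideals, the compact factor $\mathfrak g_c$ is automatically linearizable, so everything reduces to checking that the connected subgroup $G_{nc}$ corresponding to $\mathfrak g_{nc}$ has finite center (which, for a connected semi-simple Lie group, is exactly linearizability). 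The passage from $G_0$ to the full group $Isom(S,g)$ can then be handled through a standard extension argument using Proposition \ref{prop: existence of almost faithful representation}.

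To establish finiteness of $Z(G_{nc})$, I would work with the standard-position solvable subgroup $S'' \subset G_0$ from Theorem \ref{thm: GW - standard position} together with the Iwasawa-type decomposition of Lemma \ref{lemma: g2 in zNls + s}: for an Iwasawa decomposition $\mathfrak g_{nc} = \mathfrak k + \mathfrak a + \mathfrak n$, we have $\mathfrak a + \mathfrak n \subset \mathfrak s''$ and $\mathfrak m = N_\mathfrak k(\mathfrak a)$ sits inside the normalizer $\mathfrak f = N_\mathfrak g(\mathfrak s'')$. Simultaneously, Theorems \ref{thm: std. mod. preserves almost compl. solv.} and \ref{thm: admissible almost compl solv impies center in [r,r]} guarantee that $\mathfrak s''$ remains almost completely solvable and satisfies $\mathfrak z(\mathfrak s'') \subset [\mathfrak r, \mathfrak r]$ where $\mathfrak r = \mathfrak z(N_\mathfrak l(\mathfrak s)) \ltimes \mathfrak s$. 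The aim is to combine these algebraic constraints with the compactness of the isotropy $G_p$ to force the full $\mathfrak k$-factor to integrate to a compact subgroup of $G_0$; this would yield $Z(G_{nc})$ as a discrete subgroup of a compact group, hence finite.

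The main obstacle is precisely this last step: translating the algebraic constraint $\mathfrak z(\mathfrak s'') \subset [\mathfrak r, \mathfrak r]$ into the geometric statement that the compact Iwasawa factor actually integrates to a compact group. The guiding intuition is that an infinite center in $G_{nc}$ would have to originate from an ``extra'' central direction in $\mathfrak s''$ lying outside $[\mathfrak r, \mathfrak r]$ --- precisely the failure mode exhibited in Example \ref{ex: bad metric on H^2 x R}, where $\mathfrak z(\mathfrak s) = \mathbb R$ sits outside $[\mathfrak s, \mathfrak s] = \mathfrak n$ and opens room for the non-linear cover $\widetilde{SL_2(\mathbb R)}$ to appear inside the isometry group. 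I anticipate that making this contradiction rigorous will require a careful weight-space analysis of $\mathfrak s''$ relative to $ad~\mathfrak a$, combined with an application of Lie's Theorem (Theorem \ref{Lie's theorem}) to $ad~\mathfrak r$ in the spirit of the proof of Theorem \ref{thm: admissible almost compl solv impies center in [r,r]}, to extract the desired contradiction from any would-be infinite central element of $G_{nc}$.
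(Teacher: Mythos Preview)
Your proposal is not a proof of the corollary per se but an outline for re-proving Theorem~\ref{thm: linear isom grp}, since the paper derives the corollary in one line from that theorem together with Proposition~\ref{prop: linear isom implie strongly solvable}. Your high-level architecture for Theorem~\ref{thm: linear isom grp} matches the paper's: invoke Goto's criterion, dispose of $Rad(G,G)$ via Lemma~\ref{lemma: Rad(G,G) is s.c.}, and reduce to showing that the Iwasawa factor $K(G_{nc})$ is compact. Where your outline stops is exactly where the real work begins, and you are candid about this.

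The gap is that your proposed mechanism for closing the obstacle is not the one that actually works, and the heuristic you offer is misleading. You frame the problem as: an infinite $Z(G_{nc})$ should force an ``extra central direction'' in $\mathfrak s''$ outside $[\mathfrak r,\mathfrak r]$. But $Z(G_{nc})$ is a \emph{discrete} group; it has no Lie-algebraic shadow in $\mathfrak z(\mathfrak s'')$, and there is no direct contradiction to extract from Theorem~\ref{thm: admissible almost compl solv impies center in [r,r]} along those lines. A weight-space analysis of $\mathfrak s''$ under $ad\,\mathfrak a$ will not by itself detect whether the closed subgroup with Lie algebra $\mathfrak k$ is compact --- that is a global question about $G$, not an infinitesimal one.

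What the paper actually does is more structural and uses the almost faithful representation $\rho$ of $G$ (Proposition~\ref{prop: existence of almost faithful representation}) far more heavily than you anticipate, not merely for the passage to the full isometry group. Working inside $\rho(G)$, one applies Mostow's conjugacy of maximal fully reducible subalgebras (Theorem~\ref{thm: mostow - mfr are conjugate}) repeatedly to pin down $\rho(N_\mathfrak l(\mathfrak s''))$ exactly as $\rho(\mathfrak g_c) + mc(\rho(\mathfrak g_2)) + \rho(\mathfrak m)$ (Lemma~\ref{lemma: description of N_l(s)}). The hypotheses of almost-complete-solvability and admissibility enter precisely here, to force the intersection $(\rho(\mathfrak g_c)+mc(\rho(\mathfrak g_2))+\rho(\mathfrak m))\cap \rho(\mathfrak s'')$ to vanish. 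A dimension count (Proposition~\ref{prop: maximal compact of non-compact semi-simple is in stabilizer}) then shows $\mathfrak k(\mathfrak g_{nc}) \subset \mathfrak g_p$ for a suitable $p$, so $K(G_{nc})_0$ sits inside the compact isotropy $G_p$. Finally --- and the paper is explicit that compactness of $K(G_{nc})$ alone is not quite enough --- one builds a concrete faithful representation $G_1 \to Aut(S_2)\times Ad(G_1)$ using the ideal $S_2$ of Lemma~\ref{lemma: s2 properties}. None of this is visible from the weight-space picture you sketch.
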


Our strategy for the proof is as follows.  Denote our solvmanifold by $M$.  Let $S$ be a simply-connected solvable group of isometries acting simply transitively on $M$.  Then 
	$$Isom(M)= S\ (Isom(S,g)_p)$$ 
for any $p\in M$.  Observe that $Isom(S,g)_p$ being compact implies that our group $Isom(S,g)$ has finitely many connected components and so to prove linearity, it suffices to do so for the connected component of $Isom(S,g)$, see  Section \ref{sec:prereq linear}.

Denote the connected isometry group by $G$ and let  $G =G_1 G_2$ be a Levi decomposition.  By the result of Goto above (see Section \ref{sec:prereq linear}), to prove $G$ is linear, it suffices to prove  $G_1$ is linear and $Rad(G,G)$ is simply-connected.  By Lemma \ref{lemma: Rad(G,G) is s.c.}, $Rad(G,G)$ is simply-connected.

\begin{lemma}\label{lemma: levi factor of isom is linearizable} The maximal compact subalgebra of $Lie~G_1$ is tangent to a compact subgroup of $G_1$.  Furthermore, $G_1$ is linearizable.
\end{lemma}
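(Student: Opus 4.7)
The strategy is to produce a compact subgroup of $G_1$ whose Lie algebra equals the maximal compactly embedded subalgebra $\mathfrak{k}_1 := \mathfrak{k} + \mathfrak{g}_c$ of $\mathfrak{g}_1$. Once this is in hand, linearizability of $G_1$ follows from the classical equivalence, for a connected semisimple Lie group, between finiteness of the center and the maximal compactly embedded subalgebra being tangent to a compact subgroup (via the Cartan decomposition $G_1 = K_1 \exp(\mathfrak{p}_1)$). Lemma~\ref{lemma: Rad(G,G) is s.c.} combined with Goto's Theorem~\ref{thm: Goto} then promotes this to linearizability of all of $G$.

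The compact-type ideal $\mathfrak{g}_c$ is immediate: its simply-connected group is compact by Weyl's theorem, and the analytic subgroup $G_c \subset G_1$ is a quotient of this compact cover by a discrete central subgroup, hence compact. The actual work lies in the $\mathfrak{k}$-piece. Fix a base point $p \in S$ at which the standard-position subgroup $S''$ of Theorem~\ref{thm: GW - standard position} acts simply transitively, and apply Lemma~\ref{lemma: g2 in zNls + s} to realize $\mathfrak{g}_p \cap \mathfrak{f}$ as a complement of $\mathfrak{s}''$ inside $\mathfrak{f} = \mathfrak{m}+\mathfrak{a}+\mathfrak{n}+\mathfrak{g}_c+\mathfrak{g}_2$, so that $\mathfrak{g} = \mathfrak{s}'' \oplus \mathfrak{g}_p$ and $\mathfrak{g}/\mathfrak{f} \cong \mathfrak{k}/\mathfrak{m}$. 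By Theorem~\ref{thm: std. mod. preserves almost compl. solv.}, $\mathfrak{s}''$ is itself almost completely solvable, and admissibility combined with Theorem~\ref{thm: admissible almost compl solv impies center in [r,r]} gives $\mathfrak{z}(\mathfrak{s}'') \subset [\mathfrak{r},\mathfrak{r}]$.

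The first computation I would carry out is $\mathfrak{s}''\cap(\mathfrak{m}+\mathfrak{g}_c)=0$, equivalently $\mathfrak{m}+\mathfrak{g}_c \subset \mathfrak{g}_p$. Any $X$ in this intersection has $\mathrm{ad}\,X$ ad-semisimple on $\mathfrak{g}$ (from its compact embedding) and has purely imaginary eigenvalues on $\mathfrak{s}''$, which must all vanish by almost complete solvability; semisimple plus nilpotent forces $\mathrm{ad}\,X|_{\mathfrak{s}''}=0$, placing $X$ in $\mathfrak{z}(\mathfrak{s}'')\subset [\mathfrak{r},\mathfrak{r}]$. Corollary~\ref{cor:reductive elements of commutator of solvable} applied to the fully reducible operator $\mathrm{ad}\,X$ on $\mathfrak{r}$ then forces $X=0$. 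Next, the remaining $\mathfrak{k}/\mathfrak{m}$-piece is handled by lifting: the projection $\mathfrak{g}_p \twoheadrightarrow \mathfrak{g}/\mathfrak{f} \cong \mathfrak{k}/\mathfrak{m}$ is surjective by dimension count, and so for each $X \in \mathfrak{k}/\mathfrak{m}$ there is a unique lift $\hat X = X+\phi(X)\in \mathfrak{g}_p$ with $\phi(X)\in\mathfrak{f}$. Using again the admissible/almost completely solvable constraints on $\mathfrak{s}''$, one verifies that $\widehat{\mathfrak{k}_1} := \{\hat X : X\in \mathfrak{k}/\mathfrak{m}\}+\mathfrak{m}+\mathfrak{g}_c$ is a Lie subalgebra of $\mathfrak{g}_p$. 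Since $\mathfrak{g}_p$ is compactly embedded, $\widehat{\mathfrak{k}_1}$ is tangent to a compact subgroup $\widehat{K_1}\subset G$. The quotient $G \to G/G_2 \cong G_1$ is a genuine group homomorphism (as $G_2$ is a closed normal subgroup), and the image of $\widehat{K_1}$ under this homomorphism is a compact subgroup of $G_1$ whose Lie algebra is the projection of $\widehat{\mathfrak{k}_1}$, namely $\mathfrak{k}+\mathfrak{g}_c=\mathfrak{k}_1$.

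The principal obstacle will be showing that the twist $\phi$ respects Lie brackets, so that the lifts $\hat X$ genuinely assemble into a subalgebra. This is the step where admissibility is indispensable: without it the isometry group can fail to be linear, as Example~\ref{ex: bad metric on H^2 x R} already demonstrates on $H^2 \times \mathbb{R}$.
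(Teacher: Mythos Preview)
Your plan contains a genuine gap at the step you label ``equivalently.'' From $\mathfrak{s}''\cap(\mathfrak{m}+\mathfrak{g}_c)=0$ you \emph{cannot} conclude $\mathfrak{m}+\mathfrak{g}_c\subset\mathfrak{g}_p$. The decomposition $\mathfrak{f}=N_{\mathfrak{l}}(\mathfrak{s}'')\oplus\mathfrak{s}''$ only tells you that each element of $\mathfrak{m}+\mathfrak{g}_c$ splits as a sum of a piece in $\mathfrak{g}_p\cap\mathfrak{f}$ and a piece in $\mathfrak{s}''$; trivial intersection does not force the $\mathfrak{s}''$-piece to vanish. The membership $\mathfrak{m}+\mathfrak{g}_c\subset N_{\mathfrak{l}}(\mathfrak{s}'')$ would require $ad(\mathfrak{m}+\mathfrak{g}_c)$ to act \emph{skew-symmetrically} on $\mathfrak{s}''$ for the given metric, and compact embedding in $\mathfrak{g}$ alone does not give that. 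This gap then propagates: your lifts $\hat X=X+\phi(X)$ have $\phi(X)\in\mathfrak{f}$, so their projections to $\mathfrak{g}_1\cong\mathfrak{g}/\mathfrak{g}_2$ may pick up components in $\mathfrak{a}+\mathfrak{n}$, and you have no control forcing the image of $\widehat{\mathfrak{k}_1}$ to be $\mathfrak{k}+\mathfrak{g}_c$ rather than some other compactly embedded subalgebra of possibly smaller dimension.

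The paper's route is quite different and avoids this difficulty. It first invokes Proposition~\ref{prop: existence of almost faithful representation} (using only that $Rad(G,G)$ is simply connected, Lemma~\ref{lemma: Rad(G,G) is s.c.}) to obtain an almost faithful representation $\rho$ of $G$, and then works entirely inside $\rho(G)$. There Mostow's conjugacy of maximal fully reducible subalgebras lets one conjugate---by elements of the nilradical, hence within $F$ and without disturbing $\mathfrak{s}''$---so that $\rho(N_{\mathfrak{l}}(\mathfrak{s}''))$ sits inside $\rho(\mathfrak{m}+\mathfrak{a}+\mathfrak{g}_c)+mfr(\rho(\mathfrak{g}_2))$, and a further conjugation plus compactness forces it into $\rho(\mathfrak{m}+\mathfrak{g}_c)+mc(\rho(\mathfrak{g}_2))$. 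Your intersection argument (which is correct) is then used, together with a dimension count on $\mathfrak{f}$, to turn this inclusion into an \emph{equality} $\rho(N_{\mathfrak{l}}(\mathfrak{s}''))=\rho(\mathfrak{m}+\mathfrak{g}_c)+mc(\rho(\mathfrak{g}_2))$. A second dimension count (Proposition~\ref{prop: maximal compact of non-compact semi-simple is in stabilizer}) then shows $\rho(G_p)=\rho(K(G_{nc}))\rho(N_L(S''))$, so that $\mathfrak{k}(\mathfrak{g}_{nc})\subset\mathfrak{g}_p$ \emph{directly}, with no $\mathfrak{g}_2$-twist needed. Finally, rather than projecting to $G/G_2$, the paper exhibits an explicit faithful linear representation $G_1\to Aut(S_2)\times Ad(G_1)$ coming from the product description $M\cong S_2\times G_1/K$. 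The almost faithful representation $\rho$ is the missing ingredient that lets one perform the conjugations your approach would need to make the ``equivalently'' step legitimate.
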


We prove this lemma below after establishing some technical results.  Note, in general, compactness does not guarantee linearity - this extra property is a consequence of being isometries of a special solvmanifold.

\subsection{A most compatible choice of $G_p$ in $G$}
To prove Lemma \ref{lemma: levi factor of isom is linearizable}, we will show that the maximal compact subalgebra of $\mathfrak g_1 = Lie~G_1$ is a subalgebra of $\mathfrak g_p$, for some $p\in M$, where $\mathfrak g_p = Lie~G_p$.  Consequently, the maximal compact subalgebra of $\mathfrak g_1$ will be tangent to a closed subgroup of a compact group, i.e. tangent to a compact group.

At this point, we fix a choice of algebra $\mathfrak s''$ in standard position so that $\mathfrak s'' \supset i(\mathfrak g_{nc})$ for some given Iwasawa subalgebra of $\mathfrak g_{nc}$.  See Section \ref{sec:prereq isom grp solvmfld} for more discussion.

By Proposition \ref{prop: existence of almost faithful representation},  there exists an almost faithful representation
	$$\rho: G \to GL(V)$$
Observe, for $p\in M$ with stabilizer subgroup $G_p$, $\rho(G_p)$ is compact and so $\rho (\mathfrak g_p)$ is fully reducible.  Moreover, the following is true.
\begin{lemma}
Let $\mathfrak g$ be the isometry algebra of a solvmanifold and $\rho$ a faithful representation.
	\begin{enumerate}  
		\item If $\mathfrak g = \mathfrak g_1 \ltimes \mathfrak g_2$ is a Levi decomposition of $\mathfrak g$, then $\rho(\mathfrak g_1) \ltimes \rho(\mathfrak g_2)$ is a Levi decomposition of $\rho(\mathfrak g)$.
		\item Further, we may change the Levi factor $\mathfrak g_1$ to assume $\rho(\mathfrak g_1)$ commutes with $mfr(\rho(\mathfrak g_2))$ without changing our choice of $\mathfrak f$ or $\mathfrak s''$.
	\end{enumerate}
\end{lemma}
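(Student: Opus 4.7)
The plan is to leverage the fact that, at the Lie algebra level, an almost faithful (hence faithful) representation $\rho$ is an isomorphism $\mathfrak g \to \rho(\mathfrak g)$, and then transport the structural information from the Mostow-type results already established for linear Lie algebras back to $\mathfrak g$. Beyond what is forced on $\rho(\mathfrak g)$ by Lemma~\ref{lemma: levi factor commutes with mfr(g2)}, the only choice we have left to make is which Levi factor of $\mathfrak g$ to use, and we will adjust it by conjugation by an element of $nilrad(\mathfrak g)$.

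For (i), injectivity of $\rho$ on $\mathfrak g$ means semisimple subalgebras map to semisimple subalgebras and solvable ideals map to solvable ideals, so $\rho(\mathfrak g_1)$ is semisimple in $\rho(\mathfrak g)$ and $\rho(\mathfrak g_2)$ is a solvable ideal. That $\rho(\mathfrak g_2)$ is the full radical of $\rho(\mathfrak g)$ follows by pulling back any solvable ideal of $\rho(\mathfrak g)$ via the inverse isomorphism to a solvable ideal of $\mathfrak g$, which must lie in $\mathfrak g_2$.

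For (ii), I would first apply Lemma~\ref{lemma: levi factor commutes with mfr(g2)} to $\rho(\mathfrak g)$ with the Levi decomposition from (i) to obtain some maximal fully reducible subalgebra $\mathfrak m' \subset \rho(\mathfrak g_2)$ with $mfr(\rho(\mathfrak g)) = \rho(\mathfrak g_1) + \mathfrak m'$. Since $\rho(\mathfrak g_2)$ is solvable, $\mathfrak m'$ must be abelian (Corollary~\ref{cor:reductive elements of commutator of solvable}), so the derived subalgebra of the reductive algebra $mfr(\rho(\mathfrak g))$ coincides with $\rho(\mathfrak g_1)$ while the abelian complement $\mathfrak m'$ sits in its center; in particular $\rho(\mathfrak g_1)$ commutes with $\mathfrak m'$. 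The $\mathfrak m'$ produced may not coincide with the preselected $mfr(\rho(\mathfrak g_2))$, but by Mostow's theorem (Theorem~\ref{thm: mostow - mfr are conjugate}) there exists $X \in nilrad(\rho(\mathfrak g_2))$ with $\exp(ad~X)\,\mathfrak m' = mfr(\rho(\mathfrak g_2))$. Setting $Y := \rho^{-1}(X) \in nilrad(\mathfrak g)$ and $\tilde{\mathfrak g}_1 := \exp(ad~Y)(\mathfrak g_1)$, naturality of the exponential under a Lie algebra isomorphism gives $\rho(\tilde{\mathfrak g}_1) = \exp(ad~X)(\rho(\mathfrak g_1))$, which commutes with $mfr(\rho(\mathfrak g_2))$ as desired.

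To close (ii) I must verify that replacing $\mathfrak g_1$ by $\tilde{\mathfrak g}_1$ does not disturb $\mathfrak f$ or $\mathfrak s''$. The containments $nilrad(\mathfrak g) \subset \mathfrak s'' \subset \mathfrak f$ (Lemma~\ref{lemma: nilrad(G) < S} and Lemma~\ref{lemma: g2 in zNls + s}) place $Y$ inside both subalgebras, and since a subalgebra containing $Y$ is preserved by $\exp(ad~Y)$, both $\mathfrak s''$ and $\mathfrak f$ are set-theoretically preserved. The individual pieces $\mathfrak m, \mathfrak a, \mathfrak n, \mathfrak g_c$ are conjugated by $\exp(ad~Y)$, but the new Iwasawa subalgebra $\exp(ad~Y)(\mathfrak a + \mathfrak n)$ of the new $\tilde{\mathfrak g}_{nc}$ still lies in $\mathfrak s''$, so the compatibility $i(\tilde{\mathfrak g}_{nc}) \subset \mathfrak s''$ required by our choices persists. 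The main obstacle throughout is this bookkeeping: ensuring that the conjugation by $X$ used to move $\mathfrak m'$ onto the fixed $mfr(\rho(\mathfrak g_2))$ pulls back to an inner automorphism of $\mathfrak g$ that leaves all previously fixed data in place, which is made possible precisely by the critical containment $nilrad(\mathfrak g) \subset \mathfrak s''$.
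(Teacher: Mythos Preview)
Your proof is correct and follows essentially the same route as the paper's: both invoke Lemma~\ref{lemma: levi factor commutes with mfr(g2)} to produce a Levi factor commuting with \emph{some} maximal fully reducible subalgebra of $\rho(\mathfrak g_2)$, and then use a Mostow-type conjugacy via the nilradical to align with the preselected $mfr(\rho(\mathfrak g_2))$, the key point being that $nilrad(\mathfrak g)\subset \mathfrak s''\subset \mathfrak f$ so this conjugation preserves $\mathfrak f$ and $\mathfrak s''$. Your write-up is in fact more explicit than the paper's (which is rather terse on this lemma), particularly in justifying why $\rho(\mathfrak g_1)$ commutes with $\mathfrak m'$ and in the bookkeeping showing that $\exp(ad~Y)$ fixes $\mathfrak s''$ and $\mathfrak f$ setwise.
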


\begin{proof}
The first claim is true in general for any Lie algebra and representation.  We prove the second claim.

   By Lemma \ref{lemma: levi factor commutes with mfr(g2)}, we know there exists a Levi factor of $\rho(\mathfrak g)$ so that $\rho(\mathfrak g_1)$ commutes with $mfr(\rho(\mathfrak g_2))$.  However, the Levi factors of $\mathfrak g$ are all conjugate via the nilradical of $G_2$ (Mostow, see Theorem \ref{thm: mostow - mfr are conjugate}), which is a subgroup of $F$ (Lemma \ref{lemma: nilrad(G) < S}).  Thus there exists $f\in F$ so that $\rho(f) \rho(\mathfrak g_1) \rho(f)^{-1}$ commutes with $mfr(\rho(\mathfrak g_2))$.
   
As $\mathfrak f$ is self-normalizing, we see that $Ad(f)(\mathfrak g_1\cap \mathfrak f) = Ad(f)\mathfrak g_1 \cap \mathfrak f$ and decomposing this new Levi factor will give a different Iwasawa decomposition, but the same $\mathfrak f$ (see Section \ref{sec:prereq isom grp solvmfld}).  Moreover, as $F$ normalizes $S''$, $\mathfrak s''$ does not change.
\end{proof}

By Lemma \ref{lemma: levi factor commutes with mfr(g2)}, we see that
	$$mfr(\rho(\mathfrak g)) = \rho(\mathfrak g_1) + mfr(\rho(\mathfrak g_2))$$
after having fixed a choice of $mfr(\rho(\mathfrak g_2))$.  This is a Lie algebra direct sum.

\begin{lemma} There exists $p\in M$ such that $L=G_p$ satisfies
	$$\rho(N_\mathfrak l (\mathfrak s'') )< \rho(\mathfrak g_1) + mfr(\rho(\mathfrak g_2))$$
	\end{lemma}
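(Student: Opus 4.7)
The plan is to apply Mostow's theorem on conjugacy of maximal fully reducible subalgebras, taking advantage of the fact that $N_\mathfrak{l}(\mathfrak{s}'')$ is a compact Lie subalgebra of $\mathfrak{g}$.

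First I would observe that the elements of $N_\mathfrak{l}(\mathfrak{s}'')$ act as skew-symmetric operators on the metric Lie algebra $(\mathfrak{s}'',g)$, so $N_\mathfrak{l}(\mathfrak{s}'')$ is contained in $\mathfrak{so}(\mathfrak{s}'',g)$ and hence is a compact subalgebra of $\mathfrak{g}$. Its image $\rho(N_\mathfrak{l}(\mathfrak{s}''))$ is therefore the Lie algebra of a compact subgroup of $GL(V)$, and in particular is fully reducible. Consequently $\rho(N_\mathfrak{l}(\mathfrak{s}''))$ lies in some maximal fully reducible subalgebra $\mathfrak{m}$ of $\rho(\mathfrak{g})$, and by Mostow's theorem (Theorem \ref{thm: mostow - mfr are conjugate}) this $\mathfrak{m}$ is related to the chosen $mfr(\rho(\mathfrak{g})) = \rho(\mathfrak{g}_1) + mfr(\rho(\mathfrak{g}_2))$ via $\mathrm{Ad}(\rho(s))$ for some $s$ in the nilradical of $\rho(G)$. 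Since $\rho$ has discrete kernel and $nilrad(\mathfrak{g}) \subset \mathfrak{s}''$ by Lemma \ref{lemma: nilrad(G) < S}, I may take $s \in S''$.

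The point $p$ is then chosen by setting $p = s \cdot p_0$, where $p_0$ corresponds to the identity of $S''$, and defining $L = G_p$; this packages the Mostow conjugating element as a base-point adjustment (which makes sense since $S''$ acts simply transitively on $M$). The main obstacle — and what I expect to be the heart of the proof — is the final step: converting the conjugate inclusion $\mathrm{Ad}(\rho(s))\rho(N_\mathfrak{l}(\mathfrak{s}'')) \subset mfr(\rho(\mathfrak{g}))$ into the stated containment $\rho(N_\mathfrak{l}(\mathfrak{s}'')) \subset mfr(\rho(\mathfrak{g}))$. Here I would exploit that $S''$ normalizes $\mathfrak{f} = N_\mathfrak{l}(\mathfrak{s}'') \ltimes \mathfrak{s}''$ and acts trivially on the quotient $\mathfrak{f}/\mathfrak{s}''$ (since $\mathfrak{s}''$ is an ideal of $\mathfrak{f}$), so that $\mathrm{Ad}(s)X - X \in \mathfrak{s}''$ for each $X \in N_\mathfrak{l}(\mathfrak{s}'')$. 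Controlling this $\mathfrak{s}''$-contribution — via Lie's theorem (Theorem \ref{Lie's theorem}) applied to the solvable algebra $\rho(\mathfrak{s}'')$ and the almost completely solvable hypothesis, which pins down which parts of $\rho(\mathfrak{s}'')$ land in $mfr(\rho(\mathfrak{g}_2))$ — should then deliver the desired inclusion.
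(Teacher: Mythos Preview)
Your opening is right: compactness of $N_L(S'')$ gives full reducibility of $\rho(N_\mathfrak{l}(\mathfrak{s}''))$, and Mostow's theorem conjugates it into $\rho(\mathfrak{g}_1) + mfr(\rho(\mathfrak{g}_2))$ by an element of the nilradical, which you may take in $S'' \subset F$. Up to this point your argument matches the paper's.

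The gap is in your ``main obstacle.'' You treat $N_\mathfrak{l}(\mathfrak{s}'')$ as a fixed subalgebra of $\mathfrak{g}$ and then try to undo the conjugation, proposing to control the discrepancy $\mathrm{Ad}(s)X - X \in \mathfrak{s}''$ via Lie's theorem and almost complete solvability. This is both unnecessary and would not work: there is no reason the $\mathfrak{s}''$-contribution should land in $mfr(\rho(\mathfrak{g}_2))$, and the almost completely solvable hypothesis is not used at all in the paper's proof of this lemma (it enters only later, in Lemma~\ref{lemma: description of N_l(s)}). The point you are missing is that $N_\mathfrak{l}(\mathfrak{s}'')$, viewed inside $\mathfrak{g}$, depends on $\mathfrak{l} = \mathrm{Lie}\,G_p$: it is the normalizer of $\mathfrak{s}''$ in the isotropy algebra. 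Since your conjugating element $s$ normalizes $\mathfrak{s}''$, one has
\[
\mathrm{Ad}(s)\, N_\mathfrak{l}(\mathfrak{s}'') \;=\; N_{\mathrm{Ad}(s)\mathfrak{l}}(\mathfrak{s}''),
\]
and $\mathrm{Ad}(s)\mathfrak{l}$ is precisely $\mathrm{Lie}\,G_{s\cdot p}$. So the conjugate inclusion you already have \emph{is} the statement of the lemma for the new base point $p' = s\cdot p$; there is nothing further to prove. Your base-point adjustment was the whole argument, not a preliminary to it.
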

\begin{proof}
	Recall, $N_L(S'')$ is the closed subgroup of $L$ which normalizes $S''$.  	As the group $N_L(S'')$ is compact, so is its image $\rho(N_L(S''))$.  Thus $\rho(N_\mathfrak l (\mathfrak s''))$ is fully-reducible and is conjugate to a subalgebra of $\rho(\mathfrak g_1) + mfr(\rho(\mathfrak g_2)$.  To finish, we will carefully use the fact that a conjugate of $\rho(N_\mathfrak l (\mathfrak s''))$  satisfies the above inclusion by a result of Mostow (Theorem \ref{thm: mostow - mfr are conjugate}).
	
	Recall, $G=G_1G_2=G_1F$ and so $\rho(N_\mathfrak l (\mathfrak s''))$ is conjugate to a subalgebra of $\rho(\mathfrak g_1) + mfr(\rho(\mathfrak g_2)$ by an element of $F$.  As $F$ normalizes $S''$, we see that there is an $f\in F$ such that
		$$ N_{Ad(f)\mathfrak l}(\mathfrak s'') < \rho(\mathfrak g_1) + mfr(\rho(\mathfrak g_2)$$
Observing that $fLf^{-1} = f G_pf^{-1}=  G_{f\cdot p}$, our proof is complete.
\end{proof}

As $\mathfrak f = N_\mathfrak l (\mathfrak s'')\ltimes \mathfrak s''$ and $\mathfrak g_2< \mathfrak f$, we see that $\mathfrak f$ is stable under $ad~N_\mathfrak l(\mathfrak s'')$ and $ad~\mathfrak g_2$.  Thus we have
	$$\rho(N_\mathfrak l (\mathfrak s'')) < \rho( N_{\mathfrak g_1}(\mathfrak f)) + mfr(\rho(\mathfrak g_2))$$
However, $\mathfrak f$ is self-normalizing, which yields $N_{\mathfrak g_1}(\mathfrak f) = \mathfrak g_1 \cap \mathfrak f = \mathfrak m + \mathfrak a_1 + \mathfrak n_1 + \mathfrak g_c$.  This gives
	$$ \rho( N_\mathfrak l(\mathfrak s'') ) < \rho(\mathfrak m + \mathfrak a_1 + \mathfrak n_1 + \mathfrak g_c) + mfr( \rho (\mathfrak g_2)) = (\rho(\mathfrak g_c) + mfr(\rho(\mathfrak g_2))) + \rho(\mathfrak m \ltimes (\mathfrak a_1 + \mathfrak n_1))$$
Observe that the first and second sums of the right hand side are Lie algebra direct sums.

\begin{defin} Let $H$ be a Lie group with finitely many connected components.  We denote a maximal compact subgroup of $H$ by $MC(H)$ and likewise we denote the Lie algebra of such a subgroup by $mc(\mathfrak h)$.  
\end{defin}

\begin{remark} There is little ambiguity in our choice of notation as maximal compact subgroups are all conjugate.  In our definition of $mc(\mathfrak h)$, it is important to note that we have fixed a choice of Lie group $H$ with Lie algebra $\mathfrak h$.  Moreover, we observe that for a linear Lie group $H$, $mc(\mathfrak h) < mfr(\mathfrak h)$.
\end{remark}

\begin{lemma}\label{lemma: description of N_l(s)}  We may choose a point $p\in M$ so that $L=G_p$ satisfies
	$$ \rho(N_\mathfrak l (\mathfrak s'')) = \rho(\mathfrak g_c) + mc( \rho (\mathfrak g_2)) + \rho(\mathfrak m)$$	
\end{lemma}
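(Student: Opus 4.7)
The plan is to promote the preceding lemma's inclusion $\rho(N_\mathfrak l(\mathfrak s'')) \subseteq \mathfrak H$, where $\mathfrak H := \rho(\mathfrak g_c) + mfr(\rho(\mathfrak g_2)) + \rho(\mathfrak m \ltimes (\mathfrak a_1 + \mathfrak n_1))$, to the claimed equality with $\mathfrak H_c := \rho(\mathfrak g_c) + mc(\rho(\mathfrak g_2)) + \rho(\mathfrak m)$, by establishing both inclusions at a suitably refined basepoint $p \in M$.

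The first observation is that $\mathfrak H_c$ is a maximal compact subalgebra of $\mathfrak H$. Indeed, a Levi--Mal'cev decomposition of $\mathfrak H$ has semisimple factor $\rho(\mathfrak g_c) + \rho([\mathfrak m, \mathfrak m])$ (already compact) and solvable radical $\mathfrak H_r = mfr(\rho(\mathfrak g_2)) + \rho(\mathfrak z(\mathfrak m) + \mathfrak a_1 + \mathfrak n_1)$. Writing $mfr(\rho(\mathfrak g_2)) = mc(\rho(\mathfrak g_2)) \oplus ap(\rho(\mathfrak g_2))$, with $ap$ the hyperbolic (real-eigenvalue) complement, and recalling that $\rho(\mathfrak a_1)$ is hyperbolic while $\rho(\mathfrak n_1)$ is unipotent, Lie's Theorem (Theorem~\ref{Lie's theorem}) identifies the maximal compact subalgebra of $\mathfrak H_r$ as $mc(\rho(\mathfrak g_2)) + \rho(\mathfrak z(\mathfrak m))$. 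Adjoining the compact Levi factor yields exactly $\mathfrak H_c$.

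For the inclusion $\rho(N_\mathfrak l(\mathfrak s'')) \subseteq \mathfrak H_c$, note that $N_L(S'')$ is compact as a closed subgroup of the compact isotropy $L$, so $\rho(N_\mathfrak l(\mathfrak s''))$ is a compact subalgebra of $\mathfrak H$. By conjugacy of maximal compact subalgebras within a connected Lie group, applied to the connected subgroup of $\rho(G)$ with Lie algebra $\mathfrak H$, some element conjugates $\rho(N_\mathfrak l(\mathfrak s''))$ into $\mathfrak H_c$; as in the preceding lemma, this conjugation lifts to $G$ and amounts to a further change of basepoint $p \to g \cdot p$. For the reverse inclusion $\mathfrak H_c \subseteq \rho(N_\mathfrak l(\mathfrak s''))$, I appeal to Lemma~\ref{lemma: properties of f}(i) to identify $N_\mathfrak l(\mathfrak s'')$ with $\mathfrak f \cap \mathfrak g_p$. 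The subalgebras $\mathfrak g_c$, $\mathfrak m$, and the $\rho$-preimage $\mathfrak c_2 \subset \mathfrak g_2$ of $mc(\rho(\mathfrak g_2))$ are compactly embedded in $\mathfrak f$, and since our Levi factor was arranged so that $\rho(\mathfrak g_1)$ commutes with $mfr(\rho(\mathfrak g_2))$, they pairwise commute. Hence $\mathfrak g_c + \mathfrak m + \mathfrak c_2$ integrates to a compact subgroup of $G$, which, acting on the contractible manifold $S$, admits a fixed point; choosing $p$ to be such a common fixed point embeds $\mathfrak g_c + \mathfrak m + \mathfrak c_2$ into $\mathfrak g_p \cap \mathfrak f = N_\mathfrak l(\mathfrak s'')$.

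The principal obstacle is compatibility of the two basepoint conditions: the forward-inclusion conjugacy and the reverse-inclusion common fixed-point condition must be arranged simultaneously. I expect to resolve this by fixing $p$ first as a common fixed point of $\langle G_c, M, C_2 \rangle$ and then carrying out the additional forward-inclusion conjugation via an element that normalizes this compact subgroup, thereby preserving its fixed-point set, so that both conditions hold at a single $p$.
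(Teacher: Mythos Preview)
Your forward inclusion is close to the paper's but misses a key simplification that dissolves your compatibility problem. The paper first conjugates by an element of $N_1$ (the nilpotent part of the Iwasawa decomposition) to land $\rho(N_\mathfrak l(\mathfrak s''))$ inside $\rho(\mathfrak g_c) + mfr(\rho(\mathfrak g_2)) + \rho(\mathfrak m + \mathfrak a_1)$, and then observes that this algebra has a \emph{unique} maximal compact subalgebra, namely $\mathfrak H_c$: the four factors commute, $\rho(M)$ and $\rho(G_c)$ are compact, while $\rho(A_1)$ is fully non-compact by Lemma~\ref{lemma: rho M and A}. Hence no second conjugation is needed, and the forward inclusion holds at the single basepoint produced by the $N_1$-conjugation. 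Your version, conjugating directly within the larger $\mathfrak H$, would require the conjugating element to lie in $F$ (so that $\mathfrak s''$ is preserved and the move really is a change of basepoint), and you have not justified this.

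The reverse inclusion is where your approach genuinely diverges and runs into trouble. The paper does \emph{not} use a fixed-point argument; it uses a dimension count that crucially invokes the hypotheses on $\mathfrak s$. From the forward inclusion one gets $\rho(\mathfrak f) = \mathfrak H_c + \rho(\mathfrak s'')$. The paper then shows $\mathfrak H_c \cap \rho(\mathfrak s'') = 0$: any $X$ in this intersection has $ad\,X$ fully reducible with purely imaginary eigenvalues, so almost complete solvability (Theorem~\ref{thm: std. mod. preserves almost compl. solv.}) forces the eigenvalues to be zero, whence $X \in \mathfrak z(\rho(\mathfrak s''))$, and then admissibility (Theorem~\ref{thm: admissible almost compl solv impies center in [r,r]}) together with Corollary~\ref{cor:reductive elements of commutator of solvable} forces $X=0$. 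Since also $\rho(N_\mathfrak l(\mathfrak s'')) \cap \rho(\mathfrak s'') = 0$, comparing dimensions inside $\rho(\mathfrak f)$ gives equality. This is precisely where the structural hypotheses enter the whole section; your fixed-point argument bypasses them, which is a warning sign.

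Your fixed-point approach also has the gap you yourself flag but do not close. The identification $N_\mathfrak l(\mathfrak s'') = \mathfrak f \cap \mathfrak g_p$ from Lemma~\ref{lemma: properties of f} holds only for the particular $p$ corresponding to the already-fixed $\mathfrak s''$; an arbitrary common fixed point of $G_c M C_2$ gives a \emph{different} algebra in standard position with its own $N_\mathfrak l$. Your proposed fix---conjugating by an element normalizing the compact group---does not obviously land in $F$ and so need not preserve $\mathfrak s''$. The paper's dimension count sidesteps all of this: once the forward inclusion is established at one basepoint, the reverse inclusion follows with no further adjustment of $p$.
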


\begin{proof}
	We begin by showing that the left-hand side is contained in the right-hand side.  First we show that we may assume $\rho(N_\mathfrak l (\mathfrak s'')) < \rho(\mathfrak g_c) + mfr( \rho (\mathfrak g_2)) + \rho(\mathfrak m + \mathfrak a_1) $.

By the work above, we know that $\rho( N_\mathfrak l(\mathfrak s'') ) <  (\rho(\mathfrak g_c) + mfr(\rho(\mathfrak g_2))) + \rho(\mathfrak m \ltimes (\mathfrak a_1 + \mathfrak n_1))$.  Observe that the maximal fully reducible subalgebra of 	righthand side is $(\rho(\mathfrak g_c) + mfr(\rho(\mathfrak g_2))) + \rho(\mathfrak m +   \mathfrak a_1  )$ and the nilradical is (modulo the center) $\mathfrak n_1$. Thus, we can conjugate $\rho(N_\mathfrak l(\mathfrak s''))$ into the maximal fully reducible $(\rho(\mathfrak g_c) + mfr(\rho(\mathfrak g_2))) + \rho(\mathfrak m +   \mathfrak a_1  )$ via $N_1<G_1$.  It is important to note that, as before, this conjugation does not change $S$ and only changes the base point  of $L=G_p$.

As the group $N_L(S'')$ is compact, the algebra $\rho(N_\mathfrak l(\mathfrak s'')$ is contained in some maximal compact subalgebra of $(\rho(\mathfrak g_c) + mfr(\rho(\mathfrak g_2))) + \rho(\mathfrak m +   \mathfrak a_1  )$.  Observe that the group $\rho(G_c) \ mfr(\rho(G_2)) \ \rho(M) \ \rho(A_1)$ has a single maximal compact subgroup 
	$$\rho(G_c) \ MC(\rho(G_2)) \ \rho(M)  $$
since these four factors all commute with each other, $MC(\rho(G_2)) < mfr(\rho(G_2))$, $\rho(M)$ is compact, and $\rho(A_1)$ is fully non-compact (see Lemma \ref{lemma: rho M and A}).  This shows half of our lemma, namely, $\rho(N_\mathfrak l (\mathfrak s'')) < \rho(\mathfrak g_c) + mc( \rho (\mathfrak g_2)) + \rho(\mathfrak m)$.
	
We now work to show the reverse inclusion.  The above inclusion implies
	$$\rho( \mathfrak f ) =   \rho( N_\mathfrak l (\mathfrak s'')  +\mathfrak s'' )   < \rho(\mathfrak g_c) + mc( \rho(\mathfrak g_2)) + \rho(\mathfrak m) + \rho(\mathfrak s'') < \rho(\mathfrak f)    $$
and so the inclusions above are equalites.  For more details, see Section \ref{sec:prereq isom grp solvmfld}.

Next we show $( \rho(\mathfrak g_c) + mc( \rho(\mathfrak g_2)) + \rho(\mathfrak m) ) \cap \rho(\mathfrak s'') = \{0\}$.  To see this, we investigate the eigenvalues of $ad~X$ for any $X$ in the intersection of interest.  Such $ad~X$ must preserve $\mathfrak s''$ as $\mathfrak s''$ is a subalgebra.  
However, $X\in \rho(\mathfrak g_c) + mc( \rho(\mathfrak g_2)) + \rho(\mathfrak m) $ being tangent to a compact subgroup implies that $ad~X$ has purely imaginary eigenvalues and is fully reducible.  By our hypothesis  that $\mathfrak s$ is almost completely solvable, and Theorem \ref{thm: std. mod. preserves almost compl. solv.}, such $ad~X$ can have only zero eigenvalues.  Now $ad~X$ is fully reducible with zero eigenvalues, i.e., $X\in \mathfrak z(\rho(\mathfrak s))$.  Finally, our hypothesis that $\mathfrak z(\mathfrak s) < [\mathfrak s,\mathfrak s]$, together with Theorem \ref{thm: admissible almost compl solv impies center in [r,r]} and Corollary \ref{cor:reductive elements of commutator of solvable}, forces $X=0$.

Similarly, $N_\mathfrak l (\mathfrak s'') \cap \mathfrak s'' = \{0\}$ and we see that
	$$\dim  \rho(N_\mathfrak l (\mathfrak s'')) = \dim \rho(\mathfrak g_c) + mc( \rho (\mathfrak g_2)) + \rho(\mathfrak m)$$
As one of these spaces is a subspace of the other, we now see that $\rho(N_\mathfrak l (\mathfrak s'')) = \rho(\mathfrak g_c) + mc( \rho (\mathfrak g_2)) + \rho(\mathfrak m)$.
	
\end{proof}

\begin{remark*} In the sequel, we denote by $K(G_{nc})$ the closed subgroup of $G_{nc}$ whose Lie algebra $\mathfrak{k(g_{nc})}$ is the compactly embedded subalgebra of an Iwasawa decomposition $\mathfrak{g_{nc}} = \mathfrak{k(g_{nc})} + \mathfrak a_1 + \mathfrak n_1$.
\end{remark*}

\begin{prop}\label{prop: maximal compact of non-compact semi-simple is in stabilizer}
There exists $p\in M$ such that $\rho(G_p) = \rho( K(G_{nc})) \rho(N_L(S''))$.  Consequently, $Lie ~ K(G_{nc}) < \mathfrak g_p$,  $K(G_{nc})_0 < G_p$, and $K(G_{nc})$ is compact.
\end{prop}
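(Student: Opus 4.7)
The plan is to pinpoint a specific maximal compact subalgebra of $\rho(\mathfrak g)$, namely
$$\mathfrak h := \rho(\mathfrak{k(g_{nc})}) + \rho(\mathfrak g_c) + mc(\rho(\mathfrak g_2)),$$
and then realize $\mathfrak h$ as $\rho(\mathfrak g_p)$ at a suitably chosen basepoint $p$ via conjugacy of maximal compact subalgebras inside $\rho(G)$. The shape of $\mathfrak h$ is engineered so that, at such a $p$, the description of $\rho(N_\mathfrak l(\mathfrak s''))$ from Lemma \ref{lemma: description of N_l(s)} is recovered by intersecting $\mathfrak g_p$ with $\mathfrak f$, and the remaining assertions of the proposition drop out.

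To verify that $\mathfrak h$ is a maximal compact subalgebra of $\rho(\mathfrak g)$, I would first show it is tangent to a compact subgroup of $\rho(G)$: after the Levi adjustment already arranged, $\rho(\mathfrak g_1)$ commutes with $mfr(\rho(\mathfrak g_2)) \supset mc(\rho(\mathfrak g_2))$, and $\rho(\mathfrak{g_{nc}}), \rho(\mathfrak g_c)$ commute as ideals in $\rho(\mathfrak g_1)$, so the three summands of $\mathfrak h$ mutually commute; each is tangent to a compact subgroup of $\rho(G)$ (Iwasawa for $\rho(K(G_{nc}))$ in the linear semisimple $\rho(G_{nc})$; compactness of $\rho(G_c)$ and $MC(\rho(G_2))$ by definition), so $\mathfrak h$ is tangent to the compact product $\rho(K(G_{nc})) \rho(G_c) MC(\rho(G_2))$. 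Maximality is then a dimension count: taking $p_0$ produced by Lemma \ref{lemma: description of N_l(s)}, simple transitivity of $S''$ on $M$ together with $\mathfrak f = N_\mathfrak l(\mathfrak s'') \ltimes \mathfrak s''$ and the structural descriptions of $\mathfrak f$ and $N_\mathfrak l(\mathfrak s'')$ yield $\dim \mathfrak g_{p_0} = \dim \mathfrak{k(g_{nc})} + \dim \mathfrak g_c + \dim mc(\mathfrak g_2) = \dim \mathfrak h$ (using that $\rho$ is injective on Lie algebras); since $\rho(\mathfrak g_{p_0})$ is already a maximal compact subalgebra, so must be $\mathfrak h$.

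Now, by conjugacy of maximal compact subalgebras inside $\rho(G)$, write $h \rho(\mathfrak g_{p_0}) h^{-1} = \mathfrak h$ for some $h = \rho(g) \in \rho(G)$ and set $p := g \cdot p_0$; then $\mathfrak g_p = \mathfrak{k(g_{nc})} + \mathfrak g_c + mc(\mathfrak g_2)$. Intersecting with $\mathfrak f$ and using $\mathfrak{k(g_{nc})} \cap \mathfrak f = \mathfrak m$ recovers $N_\mathfrak l(\mathfrak s'') = \mathfrak m + \mathfrak g_c + mc(\mathfrak g_2)$ at $p$, so Lemma \ref{lemma: description of N_l(s)} persists; combined with $\mathfrak m \subset \mathfrak{k(g_{nc})}$, this yields $\rho(G_p) = \rho(K(G_{nc})) \rho(N_L(S''))$ at the identity-component level, and the remaining components of $G_p$ are absorbed by those of $N_L(S'')$ through the normalizer description. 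The stated consequences are then immediate: $\mathfrak{k(g_{nc})} \subset \mathfrak g_p$, the integral subgroup $K(G_{nc})_0$ lies in the compact $(G_p)_0$, and hence $K(G_{nc})$ is compact. I expect the main obstacle to be the compatibility of the conjugation argument with Lemma \ref{lemma: description of N_l(s)} at the new basepoint; this is resolved by the intersection calculation above, whose clean form is possible precisely because $\mathfrak h$ is designed to extend $\rho(\mathfrak m)$ to $\rho(\mathfrak{k(g_{nc})})$ inside $\rho(\mathfrak{g_{nc}})$ while leaving the other two factors untouched.
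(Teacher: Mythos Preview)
Your approach is essentially the paper's argument run in reverse order: you first pin down the target subalgebra $\mathfrak h = \rho(\mathfrak{k(g_{nc})}) + \rho(\mathfrak g_c) + mc(\rho(\mathfrak g_2))$, show it is a maximal compact subalgebra of $\rho(\mathfrak g)$ by matching its dimension against $\dim \rho(\mathfrak g_{p_0})$, and then invoke conjugacy of maximal compacts to realize $\mathfrak h$ as some $\rho(\mathfrak g_p)$. The paper instead starts with $\rho(G_p)_0$, conjugates it into $\rho(K(G_{nc})G_c)\,MC(\rho(G_2))$ (using conjugacy of maximal compacts in $\rho(G)$), and then runs the same dimension count to upgrade the containment to equality. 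Both routes rest on exactly the same three ingredients: Lemma~\ref{lemma: description of N_l(s)}, conjugacy of maximal compact subgroups, and the dimension bookkeeping built from $\mathfrak f = N_\mathfrak l(\mathfrak s'')\ltimes \mathfrak s''$ and $G = K(G_{nc})F$.

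One small point worth streamlining: your ``intersecting with $\mathfrak f$'' step is unnecessary. The identity $\rho(N_\mathfrak l(\mathfrak s'')) = \rho(\mathfrak m)+\rho(\mathfrak g_c)+mc(\rho(\mathfrak g_2))$ from Lemma~\ref{lemma: description of N_l(s)} is a statement about the intrinsic algebra of skew-symmetric derivations of $\{\mathfrak s'',g\}$ and does not depend on any basepoint, so once $\rho(\mathfrak g_p)=\mathfrak h$ you get $\rho(\mathfrak g_p)=\rho(\mathfrak{k(g_{nc})})+\rho(N_\mathfrak l(\mathfrak s''))$ directly by substituting $\rho(\mathfrak m)\subset\rho(\mathfrak{k(g_{nc})})$; there is nothing to ``recover'' at the new point. (Relatedly, be aware that the general conjugating element $g\in G$ need not lie in $F$, so it need not preserve $\mathfrak s''$; the paper is careful to conjugate only by elements of $F$ or $N_1$ precisely to keep $\mathfrak s''$ fixed. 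Your argument avoids this issue only because you never actually use $\mathfrak g_p\cap\mathfrak f = N_\mathfrak l(\mathfrak s'')$ at the new $p$, despite phrasing it that way.) Finally, your handling of the non-identity components of $G_p$ is sketchy, but the paper's dimension count likewise establishes only $\rho(G_p)_0 = (\rho(K(G_{nc}))\rho(N_L(S'')))_0$, and the stated consequences need only the Lie-algebra level.
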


\begin{remark} A priori, the group $L=G_q$ might be the stabilizer of a point $q$  different from $p$.
\end{remark}

\begin{proof}
The first step is to show $\rho(G_p) < \rho( K(G_{nc})) \rho(N_L(S''))$.  As $G_p$ is compact, for any $p\in M$, and maximal fully reducible subalgebras are all conjugate, we see that $\rho(G_p)_0 < \rho(G_1) \ mfr(\rho(G_2))$ for some $p\in M$.  Furthermore, we may assume (by conjugation) that 
	$$\rho(G_p)_0 < \rho(K(G_{nc})G_c) \ MC(\rho(G_2))$$
as maximal compact subgroups are conjugate.  Note, although the subgroup $K(G_{nc})$ of $G_1$ might not be compact, the image under $\rho$ must be compact as $\rho(G_1)$ is a linear semi-simple Lie group.

By the Lemma above, we then have $\rho(G_p)_0 <  \rho(K(G_{nc})G_c) \ MC(\rho(G_2)) \ \rho(N_L(S'')) = \rho(K(G_{nc}) \ N_L(S'') )$.  We will show this containment is an equality by counting dimensions.  Recall, $\rho$ being an almost faithful representation yields

\begin{eqnarray}
	\dim \rho(K(G_{nc}) \ N_L(S'') ) &=& \dim K(G_{nc}) \ N_L(S'') \\
													&=& \dim K(G_{nc}) - \dim M + \dim N_L(S'')\\
													&=& \dim K(G_{nc}) - \dim M + \dim F - \dim S''\\
													&=& \dim K(G_{nc})F -\dim S''\\
													&=& \dim G - \dim S''\\
													&=& \dim G_p\\
													&=& \dim \rho(G_p)
\end{eqnarray}
The second equality follows from $M =   K(G_{nc}) \cap N_L(S'')$, by Lemma \ref{lemma: description of N_l(s)}.  The third equality follows from the fact that $F = N_L(S'')\ltimes S''$ is the normalizer of $S''$ and that $S''$ acting simply transitively implies $S''\cap N_L(S'') \subset S''\cap L = \{e\}$.  The fourth equality follows from the definition of $F = MANG_cG_2$, which implies $K(G_{nc})\cap F = M$.  The fifth equality follows from the definition of $F$.  The sixth equality follows from the fact that $S$ acts simply transitively, and so $S''\cap G_p = \{e\}$.  The last equality follows from $\rho$ being almost faithful.

\end{proof}

From this proof, we immediately see the following corollary.

\begin{cor} \label{cor: stabilizer respects levi decomposition} Let $p\in M$ be as above.  Then $(G_p)_0 = K(G_{nc}) ~ G_c ~ (G_p \cap G_2) = (G_p\cap G_1) (G_p\cap G_2)$.
\end{cor}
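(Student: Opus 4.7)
The plan is to read off $\mathfrak g_p$ explicitly from the preceding proposition together with Lemma \ref{lemma: description of N_l(s)}, then split the resulting expression along the Levi decomposition $\mathfrak g = \mathfrak g_1 + \mathfrak g_2$, and finally lift to the group level.

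First I would derive the Lie algebra identity. The proposition asserts $\rho(G_p) = \rho(K(G_{nc}))\,\rho(N_L(S''))$, and since $\rho$ is almost faithful (hence a local isomorphism) this becomes
$$\mathfrak g_p \;=\; \mathfrak{k(g_{nc})} + \mathfrak{n_\mathfrak l(s'')}.$$
Substituting $\mathfrak{n_\mathfrak l(s'')} = \mathfrak g_c + mc(\mathfrak g_2) + \mathfrak m$ from Lemma \ref{lemma: description of N_l(s)} and using that $\mathfrak m = N_{\mathfrak k}(\mathfrak a) \subset \mathfrak{k(g_{nc})}$ by the definition of the Iwasawa decomposition, I obtain
$$\mathfrak g_p \;=\; \mathfrak{k(g_{nc})} + \mathfrak g_c + mc(\mathfrak g_2).$$

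Next I would exploit the Levi decomposition. The first two summands lie in $\mathfrak g_1$, the third in $\mathfrak g_2$, and $\mathfrak g_1 \cap \mathfrak g_2 = 0$. Hence the above sum is a vector-space direct sum, and intersecting with each factor yields
$$\mathfrak g_p \cap \mathfrak g_1 \;=\; \mathfrak{k(g_{nc})} + \mathfrak g_c,\qquad \mathfrak g_p \cap \mathfrak g_2 \;=\; mc(\mathfrak g_2).$$
This is the Lie-algebra incarnation of the second equality in the corollary and also shows that $\mathfrak g_p = (\mathfrak g_p \cap \mathfrak g_1) + (\mathfrak g_p \cap \mathfrak g_2)$.

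Finally I would upgrade this to the group level. The proposition already records $K(G_{nc})_0 \subset G_p$ and its proof exhibits $G_c \subset N_L(S'') \subset G_p$; together with $(G_p \cap G_2)_0$ having Lie algebra $mc(\mathfrak g_2)$, these give subgroups of $(G_p)_0$ whose Lie algebras span $\mathfrak g_p$. Because $K(G_{nc})$ and $G_c$ sit in commuting ideals of $\mathfrak g_1$ and $G_1$ normalizes the radical $G_2$ (hence normalizes $G_p \cap G_2$), the set $K(G_{nc})\,G_c\,(G_p \cap G_2)$ is actually a subgroup; being connected and of full dimension in $(G_p)_0$, it must equal $(G_p)_0$. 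The main (minor) obstacle is the bookkeeping of connectedness — checking that the named subgroups truly sit inside $G_p$ rather than merely being tangent to it — and this is precisely what the inclusions furnished by the proposition resolve, so the corollary really is immediate from the data already in hand.
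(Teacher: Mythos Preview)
Your proof is correct and follows exactly the approach the paper has in mind: the paper's own proof is the single sentence ``The proof amounts to checking this equality at the Lie algebra level, which follows using the faithful representation $\rho$,'' and you have simply written out that check in detail, pulling back Proposition~\ref{prop: maximal compact of non-compact semi-simple is in stabilizer} and Lemma~\ref{lemma: description of N_l(s)} through $\rho$ and then reading off the split along $\mathfrak g_1 + \mathfrak g_2$. Your closing remarks about connectedness are a welcome clarification of a point the paper leaves implicit.
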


The proof amounts to checking this equality at the Lie algebra level, which follows using the faithful representation $\rho$.

\begin{defin}  We say our Levi decomposition is compatible if $(G_p)_0 = K(G_{nc}) ~ G_c ~ (G_p \cap G_2) = (G_p\cap G_1) (G_p\cap G_2)$, for $p = e\in S''$.  Such a decomposition exists for admissible, almost completely solvable groups $S$ by the corollary above.
\end{defin}

\subsection{Linearity of the Levi factor $G_1$}  To show $G_1$ is linear, we give an explicit linearization of $G_1$.  This will be used in the following section when describing how to construct the isometry group from algebraic data of $\mathfrak s$.

A key ingredient to the proof is that the group $K(G_{nc})$ is compact.  However, we remind the reader that compactness of $K(G_{nc})$ alone is not enough to guarantee $G_{nc}$ is linear as there do exist semi-simple groups which finitely cover linear groups, which cannot be linear themselves, so care must be taken.

\begin{lemma}\label{lemma: s2 properties} Let $S$ be an almost completely solvable Lie group such that $\mathfrak z(\mathfrak s) \subset [\mathfrak s,\mathfrak s ]$.  Assume $S''$ is simply-connected and in standard position.  Let $G_1G_2$ be a compatible decomposition of the connected isometry group of $S$ relative to $G_p$.  Then 
\begin{enumerate}
	\item $S'' = S_1 S_2$ where $S_1 = S'' \cap G_1$ and $S_2 = S''\cap G_2$.
	\item  $S_2$ is a normal subgroup of $G$.
	\item $G_2 = S_2 (G_2)_p$ for any $p\in M$.
	\item Relative to the Killing form of $\mathfrak g_2$, $\mathfrak s_2$ is the orthogonal complement to $\mathfrak g_2\cap \mathfrak g_p$ in $\mathfrak g_2$ for any $p\in S$.
\end{enumerate}
\end{lemma}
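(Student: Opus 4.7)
The plan is to prove the four statements in order, using $\mathfrak a$-eigenspace decompositions together with the $B_\mathfrak f$-orthogonality of $\mathfrak s''$ and $N_\mathfrak l(\mathfrak s'')$ established in earlier sections, plus the description of $N_\mathfrak l(\mathfrak s'')$ from Lemma \ref{lemma: description of N_l(s)}.

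For (i), I will first show the vector-space identity $\mathfrak s'' = \mathfrak s_1 + \mathfrak s_2$, with $\mathfrak s_1\cap \mathfrak s_2 \subset \mathfrak g_1\cap \mathfrak g_2 = 0$; this integrates to $S'' = S_1 S_2$. Writing $\mathfrak f = \mathfrak m \oplus \mathfrak a \oplus \mathfrak n \oplus \mathfrak g_c \oplus \mathfrak g_2$ as a vector space, and using that $\mathfrak a \subset \mathfrak s''$, the subspace $\mathfrak s''$ is $\mathfrak a$-invariant and so splits into $\mathfrak a$-eigenspaces. On nonzero-eigenvalue pieces, the eigenspace of $\mathfrak f$ lies entirely in $\mathfrak n_\lambda + \mathfrak g_2^\lambda$ for $\lambda>0$ and in $\mathfrak g_2^\lambda$ for $\lambda<0$; since $\mathfrak n \subset \mathfrak s''$, one immediately reads off that the $\mathfrak g_1$- and $\mathfrak g_2$-components of each such eigencomponent of an element of $\mathfrak s''$ again lie in $\mathfrak s''$. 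The delicate case is the zero-eigenspace, where components in $\mathfrak m$ and $\mathfrak g_c$ could a priori appear. To kill these, I will use that $\mathfrak g_c$ is the Levi factor of $\mathfrak f$, whence $B_\mathfrak f$ is zero between $\mathfrak g_c$ and the radical $\mathfrak m+\mathfrak a+\mathfrak n+\mathfrak g_2$, and negative definite on $\mathfrak g_c$ (sum of the negative definite Killing form of the compact $\mathfrak g_c$ and a non-positive contribution from the $\mathfrak g_2$-action). Since $\mathfrak g_c \subset N_\mathfrak l(\mathfrak s'')$ (Lemma \ref{lemma: description of N_l(s)}) and $\mathfrak s'' \perp N_\mathfrak l(\mathfrak s'')$, the $\mathfrak g_c$-component of any $X\in \mathfrak s''$ vanishes. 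A parallel argument with $\mathfrak m$ in place of $\mathfrak g_c$, together with the vanishing $\mathfrak g_c$-component, kills the $\mathfrak m$-component. The main obstacle is this last step: establishing the definiteness of $B_\mathfrak f|_{\mathfrak m}$ and the cleanness of the pairing $B_\mathfrak f(\mathfrak m, \cdot)$ against the remaining pieces requires care, and is where I expect to spend most of the technical work.

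For (ii), I will show $\mathfrak s_2$ is an ideal of $\mathfrak g$ and conclude that the corresponding connected subgroup of the simply-connected $S''$ is a closed normal subgroup of $G$. Using Lemma \ref{lemma: derivation of solvabe goes to nilradical}, $[\mathfrak g_1,\mathfrak g_2] \subset \mathrm{nilrad}(\mathfrak g_2) = \mathrm{nilrad}(\mathfrak g) \subset \mathfrak s''$ by Lemma \ref{lemma: nilrad(G) < S}, so $[\mathfrak g_1,\mathfrak s_2] \subset \mathrm{nilrad}(\mathfrak g)\cap \mathfrak g_2 \subset \mathfrak s_2$. Also $[\mathfrak g_2,\mathfrak s_2]\subset [\mathfrak g_2,\mathfrak g_2]\subset \mathrm{nilrad}(\mathfrak g_2)\subset \mathfrak s_2$. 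Hence $\mathfrak s_2$ is an ideal of $\mathfrak g$ and $S_2$ is normal in the connected group $G$.

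For (iii), I will establish the Lie algebra decomposition $\mathfrak g_2 = \mathfrak s_2 \oplus (\mathfrak g_2 \cap \mathfrak g_p)$ and then integrate. Using (i) and the compatible decomposition $(G_p)_0 = (G_p\cap G_1)(G_p\cap G_2)$, the dimension equation $\dim G = \dim S'' + \dim G_p$ splits as $\dim G_1 - \dim \mathfrak s_1 = \dim(\mathfrak g_p\cap \mathfrak g_1)$ and $\dim G_2 - \dim \mathfrak s_2 = \dim(\mathfrak g_p\cap \mathfrak g_2)$; for the former one verifies $\mathfrak s_1 = \mathfrak a + \mathfrak n$ and $\mathfrak g_p\cap \mathfrak g_1 = \mathfrak k + \mathfrak g_c$ (from the analysis in (i) plus Proposition \ref{prop: maximal compact of non-compact semi-simple is in stabilizer}). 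Combined with $\mathfrak s_2 \cap \mathfrak g_p = 0$ (from $\mathfrak s''\cap \mathfrak g_p = 0$, since $S''$ acts simply transitively), the dimensions match. At the group level, $S_2 (G_2)_p$ is a subgroup of $G_2$ thanks to normality of $S_2$ from (ii), it is open by dimension, contains the identity, and $G_2$ is connected, so $S_2 (G_2)_p = G_2$.

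For (iv), the key identity is $B_\mathfrak f|_{\mathfrak g_2} = B_{\mathfrak g_2}$, which holds because $\mathfrak g_2$ is an ideal of $\mathfrak f$ and so $ad_{\mathfrak f}(X)$ has image in $\mathfrak g_2$ for $X\in \mathfrak g_2$. From Lemma \ref{lemma: g2 in zNls + s}(i), $\mathfrak s'' \perp (\mathfrak f \cap \mathfrak g_p)$ under $B_\mathfrak f$, and restricting this pairing to $\mathfrak s_2 \subset \mathfrak s''$ and $\mathfrak g_2 \cap \mathfrak g_p \subset \mathfrak f \cap \mathfrak g_p$ gives $\mathfrak s_2 \perp (\mathfrak g_2 \cap \mathfrak g_p)$ under $B_{\mathfrak g_2}$. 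Combined with the direct sum decomposition $\mathfrak g_2 = \mathfrak s_2 \oplus (\mathfrak g_2\cap \mathfrak g_p)$ from (iii), this identifies $\mathfrak s_2$ as the orthogonal complement of $\mathfrak g_2\cap \mathfrak g_p$ in $\mathfrak g_2$.
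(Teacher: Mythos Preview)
Your overall architecture is sound, and parts (ii)--(iv) are essentially correct (indeed, your (ii) is the same as the paper's, and your (iii)--(iv) reverse the paper's order but use the same ingredients). The difficulty is entirely in (i), where your $\mathfrak a$-eigenspace argument has a concrete gap.

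The claim that ``$\mathfrak g_c$ is the Levi factor of $\mathfrak f$'' is false in general: $\mathfrak m = N_{\mathfrak k}(\mathfrak a)$ is only reductive, and its semisimple part $\mathfrak m_{ss}$ belongs to the Levi factor of $\mathfrak f$ as well. More seriously, even granting that $\mathfrak g_c$ is a semisimple ideal of the Levi factor, the vanishing $B_{\mathfrak f}(\mathfrak g_c, \mathfrak m) = 0$ that you need does \emph{not} follow: for $X_c\in\mathfrak g_c$ and $Y_m\in\mathfrak m$ one computes $B_{\mathfrak f}(X_c,Y_m)=\operatorname{tr}_{\mathfrak g_2}\bigl(\rho(X_c)\rho(Y_m)\bigr)$, the trace of a product of two \emph{commuting} skew-symmetric derivations of $\mathfrak g_2$, which can be nonzero. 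So you cannot kill the $\mathfrak g_c$-component and the $\mathfrak m$-component separately.

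The paper circumvents this by reversing the logic of (i) and (iv). It first \emph{defines} $\mathfrak s_2$ to be the $B_{\mathfrak g_2}$-orthogonal complement of $\mathfrak g_2\cap\mathfrak g_p$ in $\mathfrak g_2$, and then proves $\mathfrak s_2\subset\mathfrak s''$ using the single identity
\[
B_{\mathfrak f}(X,Y)=\operatorname{tr}_{\mathfrak g_2}(\operatorname{ad} X\circ\operatorname{ad} Y)=B_{\mathfrak g}(X,Y)\qquad\text{for } X\in\mathfrak f,\ Y\in\mathfrak g_2,
\]
valid because $\mathfrak g_2$ is an ideal of both $\mathfrak f$ and $\mathfrak g$. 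Since $B_{\mathfrak g}(\mathfrak g_1,\mathfrak g_2)=0$, the orthogonality of $\mathfrak s_2$ to $\mathfrak m+\mathfrak g_c\subset N_{\mathfrak l}(\mathfrak s'')$ is automatic, and only the $\mathfrak g_2\cap\mathfrak g_p$ part needs checking, which is the definition. A dimension count using the compatible decomposition then gives $\mathfrak s''=\mathfrak s_1+\mathfrak s_2$. If you want to rescue your direct approach, the same identity is the missing ingredient: treat $X_m+X_c$ together, use $B_{\mathfrak f}(X_2,\mathfrak m+\mathfrak g_c)=B_{\mathfrak g}(X_2,\mathfrak m+\mathfrak g_c)=0$ to reduce $0=B_{\mathfrak f}(X,X_m+X_c)$ to $B_{\mathfrak f}(X_m+X_c,X_m+X_c)=0$, and then invoke negative semidefiniteness of $B_{\mathfrak f}$ on the compactly embedded piece to conclude $X_m=X_c=0$.
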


\begin{proof}
Let $p\in S''$ be as above, satisfying the definition of compatible Levi decomposition.  We begin by defining $\mathfrak s_2$ to be the orthogonal complement of $\mathfrak g_2 \cap \mathfrak g_p$ in $\mathfrak g_2$ relative to the Killing form $B_{\mathfrak g_2}$ of $\mathfrak g_2$.  We will show that $\mathfrak s_2 = \mathfrak s'' \cap \mathfrak g_2$.

First, we show that $\mathfrak s_2\subset \mathfrak s''$ by showing it is  orthogonal to $\mathfrak f \cap \mathfrak g_p$ relative to the Killing form $B_\mathfrak f$ of $\mathfrak f$.    By our choice of $p\in S''$, we have $\mathfrak f \cap \mathfrak g_p =  N_\mathfrak l (\mathfrak s'') = \mathfrak m + \mathfrak g_c + (\mathfrak g_2 \cap \mathfrak g_p)$.  Given $X\in \mathfrak f \subset \mathfrak g$ and $Y\in \mathfrak g_2$, we see that
	$$B_\mathfrak f (X,Y) = tr_\mathfrak f \ ad~X\circ ad~Y  =tr_{\mathfrak g_2} \ ad~X\circ ad~Y = B_\mathfrak g(X,Y)     $$
where $B_\mathfrak g$ is the Killing form of $\mathfrak g$.  As $\mathfrak g_1$ and $\mathfrak g_2$ are orthogonal relative to $B_\mathfrak g$, we only need to verify that $\mathfrak s_2$ is orthogonal to $\mathfrak g_2\cap \mathfrak g_p$ relative to $B_\mathfrak f$.  However, the above equation shows for $X\in \mathfrak g_2 \subset \mathfrak f$ and $Y\in \mathfrak g_2$, $B_\mathfrak f (X,Y) = B_{\mathfrak g_2}(X,Y)$, and so $\mathfrak s_2$ is orthogonal to $\mathfrak f \cap \mathfrak g_p$ relative to $B_\mathfrak f$.

Define $\mathfrak s_1 = \mathfrak s'' \cap \mathfrak g_1$.  We now have $\mathfrak s_1 + \mathfrak s_2 \subset \mathfrak s''$.  It immediately follows from the definition of a compatible decomposition that  $\mathfrak g_1 = \mathfrak s_1 + (\mathfrak g_1 \cap \mathfrak g_p)$ and   we have the following inclusion,
	$$\mathfrak g = \mathfrak g_1 + \mathfrak g_2 = \mathfrak g_1 + \mathfrak s_2 + (\mathfrak g_2\cap \mathfrak g_p) \subset \mathfrak s'' + \mathfrak g_p = \mathfrak g$$
As $\mathfrak s'' \cap \mathfrak g_p = \{0\}$,  counting dimensions shows that   $\mathfrak s''= \mathfrak s_1 + \mathfrak s_2$, which shows the first claim.

Next we prove the second claim.  Recall, the nilradical $nilrad(\mathfrak g_2)$ of $\mathfrak g_2$ is contained in the kernel of the Killing form $B_{\mathfrak g_2}$.   As $\mathfrak s_2$ is the orthogonal complement of $\mathfrak g_2\cap \mathfrak g_p$ in $\mathfrak g_2$, we see that 
	$$nilrad(\mathfrak g_2) \subset \mathfrak s_2$$
To finish, we apply the fact that any derivation of a solvable Lie algebra takes its image in the nilradical (Lemma \ref{lemma: derivation of solvabe goes to nilradical}).   Take $X\in \mathfrak g$.  As $\mathfrak g_2$ is an ideal of $\mathfrak g$, $ad~X|_{\mathfrak g_2} \in Der(\mathfrak g_2)$ and so $ad~X: \mathfrak g_2 \to nilrad(\mathfrak g_2)$.  Consequently,
	$$ad~X : \mathfrak s_2 \to \mathfrak s_2$$
that is, $\mathfrak s_2$ is an ideal of $\mathfrak g$.  This implies the analogous statement at the group level.

The third claim follows immediately from $G_2$ and $S_2$ being normal subgroups, the Killing form being $Ad$-invariant, and the property $G_{g\cdot p} = g G_p g^{-1}$.

To prove the fourth claim, one just uses the fact that the Killing form is invariant under the automorphism group together with $\mathfrak s_2$ being ideal.

\end{proof}

We are now in a position to prove Lemma \ref{lemma: levi factor of isom is linearizable}, that is, $G_1$ is linearizable.

\begin{proof}
	Let $G =G_1G_2$ be a compatible Levi decomposition of the connected isometry group $G$, relative to $G_p$.    	
Write $G_1 = K S_1$ where $K=K(G_{nc})G_c < G_p$ is a maximal compact subgroup of $G_1$ and $S_1=G_1\cap S''$ is an Iwasawa subgroup of $G_{nc} < G_1$.
	
	From  Corollary \ref{cor: stabilizer respects levi decomposition},  Lemma \ref{lemma: s2 properties}, and  the fact that $G_2\cap G_p$ commutes with $G_1$,  we may write our solvmanifold $M$ as follows
	$$ M= G /G_p =  G_2/(G_2\cap G_p) \times G_1/K = S_2 \times G_1/K$$
with the action of $g\in G_1$ on $M$ given by
		$$ g\cdot (r, lK) = (grg^{-1}, glK)$$
for $r\in S_2$ and $l\in G_1$.  
		
Observe that the finite center $Z$ of $G_1$ is contained in the maximal compact $K$ as the conjugacy of maximal compact subgroups implies this finite central group is in every maximal compact subgroup.  Consequently, $G_1/K = Ad(G_1)/Ad(K)$, where $Ad(G_1)$ denotes the adjoint group $G_1/Z$, and we see that the action of $G_1$ on $M$ gives a homomorphism into the linear group
		$$G_1 \to Aut(S_2) \times Ad(G_1)$$	
As the group $G_1$ acts effectively on $M$, our homomorphism is an isomorphism onto its image, and we have linearized $G_1$.
\end{proof}
This completes the proof of Theorem \ref{thm: linear isom grp}.

\subsection{Non-linear isometry groups}
It is natural to ask if we are able to relax any of the hyptheses of the theorem, in general.  Among completely solvable Lie groups, Example \ref{ex: bad metric on H^2 x R} shows that we do need the condition of admissibility to guarantee linearity of the isometry group.  The following example shows that it is hard to relax the condition of being almost completely solvable.

\begin{example}  There exists a Riemannian solvable Lie group $S$ such that $\mathfrak{z(s)} = 0$ and $Isom(S)$ is not linear.
\end{example}

To construct our example, we will start with a simple Riemannian solvable Lie group and perform a normal modification.  That is, we will find a different transitive solvable group which acts transitively and has the desired properties, cf. Section \ref{sec:prereq isom grp solvmfld}.

Start with $G=G_1\times G_2$ where $G_1$ is the group given in Example \ref{ex: bad metric on H^2 x R} and $G_2$ is the five dimensional nilpotent Lie group whose Lie algebra is defined as follows.  We let $\{e_1, \dots, e_5\}$ be a basis of $\mathfrak g_2$ and consider the relations
	$$ [e_1,e_2] = e_4 \quad [e_2,e_3]=e_5$$
These define a two-step nilpotent Lie structure by including the obvious non-zero brackets from anti-symmetry and all other relations among these basis vectors being zero.

Declaring $\mathfrak g_1$ to be orthogonal to $\mathfrak g_2$ and using the above basis as an orthonormal basis, we endow $G_2$ with a left-invariant metric and have
	$$Isom(G) \supset Isom(G_1)\times Isom(G_2)$$
As $Isom(G_1)$ is not linear, the same is true for $Isom(G)$.  Next we modify $\mathfrak g$ by twisting on a skew-symmetric derivation.

Consider the derivation $D\in Der(\mathfrak g)$ which is defined to be zero on $\mathfrak g_1$ and on $\mathfrak g_2$ defined via
	\begin{eqnarray*}
		D(e_1) &=& -e_3\\
		D(e_2) &=& 0\\
		D(e_3) &=& e_1\\
		D(e_4) &=& e_5\\
		D(e_5) &=& -e_4
	\end{eqnarray*}
Clearly this is skew-symmetric relative to the given inner product on $\mathfrak g$.  Note that $D$ is non-singular on the center of $\mathfrak g_2$.

Now consider the solvable Lie group $S$ with Lie algebra given by $H^2 \times \mathbb R \ltimes \mathfrak g_2$ where $H^2$ is the Iwasawa subalgebra of $\mathfrak{sl}(2,\mathbb R)$ and $\mathbb R$ acts on $\mathfrak g_2$ via $D$.  As $S$ acts simply-transitively, by isometries, on $G$, we see that $S$ has all the desired properties for our example.

\section{Algorithm for full isometry group}
\label{sec: Algorithm for full isometry group}
In this section, we describe how one can recover the full isometry group of any left-invariant metric on an almost completely solvable Lie group, as long as the group is admissible.  The proves the second half of Theorem \ref{thm: admissible almost compl solv has linear isometry group and reconstructable from metric lie algebra data}.

\begin{defin}  Let $\mathfrak s$ be a solvable Lie algebra.  We say $\mathfrak s = \mathfrak s_1 + \mathfrak s_2$ is an $LR$-decomposition if
	\begin{enumerate}
	\item $\mathfrak s_1$ is trivial or  
	an Iwasawa algebra of a non-compact semi-simple Lie algebra $\mathfrak g_1$,	
		\item $\mathfrak s_2$ is a (possibly trivial) ideal,
	\item The restriction of the adjoint representation $ad|_{\mathfrak s_2}: \mathfrak s_1 \to Der(\mathfrak s_2)$ extends to a representation $\rho : \mathfrak g_1 \to Der(\mathfrak s_2)$.
	\end{enumerate}
\end{defin}
Given any maximal compact subalgebra $\mathfrak k(\mathfrak g_1) \subset \mathfrak g_1$, we may  identify $\mathfrak s_2 + \mathfrak s_1 \simeq \mathfrak s_2 + \mathfrak g_1/ \mathfrak k(\mathfrak g_1)$ so that the right-hand side inherits the inner product from $\mathfrak s$.  Observe that $\mathfrak k(\mathfrak g_1)$ acts on $\mathfrak s_2 + \mathfrak g_1/ \mathfrak k(\mathfrak g_1)$ via $\rho \times ad$.
	
Let $\mathfrak s$ be a metric solvable Lie algebra.  We say an LR-decomposition is \emph{suitable} if there exists a maximal compact subalgebra $\mathfrak k(\mathfrak g_1) \subset \mathfrak g_1$ such that $(\rho \times ad) (\mathfrak k(\mathfrak g_1)) \subset \mathfrak{so}(\mathfrak s_2 + \mathfrak g_1/ \mathfrak k(\mathfrak g_1))$.  We say a suitable LR-decomposition is \emph{maximal} if $\mathfrak s_1$ has maximal dimension.

\begin{remark} The semi-simple algebra $\mathfrak g_1$ above is uniquely determined by its Iwasawa algebra $\mathfrak s_1$ (see \cite{Araki:OnRootSystemsAndAnInfinitesimalClassificationOfIrreducibleSymmetricSpaces}) and, as such, we use the notation $\mathfrak s_1 = i(\mathfrak g_1)$.  The idea of an LR-decomposition was first considered by Alekseevskii-Cortes where it is called an SR-decomposition.  We have changed the terminology to reflect the historical notation used for a Levi decomposition, from which this comes.  

The term suitable in \cite{AlekseevskyCortes:IsomGrpsOfHomogQuatKahlerMnflds} is used slightly differently, but to the same end.  Our definition is more technical due to our not requiring $\mathfrak s_1$ to be orthogonal to $\mathfrak s_2$. 

\end{remark}

Let $\mathfrak s$ be a metric Lie algebra.  We denote the derivation algebra of $\mathfrak s$ by $Der(\mathfrak s)$  and the skew-symmetric derivations by $\mathfrak{d(s)}$.  This gives rise to the isometry algebra
	$$\mathfrak{d(s)} \ltimes \mathfrak s$$
that is, this is the Lie algebra of a group of isometries of the simply-connected Riemannian Lie group $S$.  When $\mathfrak s$ is completely solvable and unimodular, this is the full isometry algebra (see \cite{GordonWilson:IsomGrpsOfRiemSolv}).  However, in general, this is not the case.

Given an LR-decomposition $\mathfrak s = \mathfrak s_1 + \mathfrak s_2$, we denote by $\mathfrak{d_0(s_2)}$ the skew-symmetric derivations of $\mathfrak s$ which restrict to derivations on $\mathfrak s_2$ and vanish on $\mathfrak s_1$.

\begin{thm}\label{thm: LR algorithm} Let $S$ be an almost completely solvable, simply-connected Riemannian Lie group with metric Lie algebra $\mathfrak s$.  
Any suitable LR-decomposition $\mathfrak s = \mathfrak s_1 + \mathfrak s_2$    
gives rise to an isometry algebra ${\mathfrak g \subset Lie~Isom(S)}$ defined as follows
	$$\mathfrak g = \mathfrak g_1 + \mathfrak{d_0(s_2)} + \mathfrak s_2$$
where $\mathfrak s_2$ is an ideal, $\mathfrak{d_0(s_2)}$ acts on $\mathfrak s_2$ as derivations, 
$\mathfrak g_1$ acts on $\mathfrak s_2$ via $\rho$ (the extension of the adjoint action of $\mathfrak s_1$ on $\mathfrak s_2$), and $[\mathfrak g_1, \mathfrak{d_0(s_2)}]=0$.

Furthermore, assume $\mathfrak s$ is admissible, i.e., $\mathfrak{z(s)} \subset [\mathfrak s,\mathfrak s]$.   Then a maximal LR-decomposition of the  modification $\mathfrak s''$ in standard position is unique (up to conjugacy in $Lie~Isom(S)$) and the above isometry algebra for $\mathfrak s''$ is the full isometry algebra.
\end{thm}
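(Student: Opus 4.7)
The proof splits naturally into the two claims of the theorem. For the first claim, given a suitable LR-decomposition $\mathfrak s = \mathfrak s_1 + \mathfrak s_2$, I would realize $\mathfrak g_1 + \mathfrak{d_0(s_2)} + \mathfrak s_2$ geometrically as a subalgebra of $Lie~Isom(S)$. The suitable identification $\mathfrak s \simeq \mathfrak s_2 + \mathfrak g_1/\mathfrak k(\mathfrak g_1)$ of inner product spaces integrates to a Riemannian diffeomorphism $S \simeq S_2 \times G_1/K$, where $K$ is the connected subgroup of $G_1$ with Lie algebra $\mathfrak k(\mathfrak g_1)$. Under this identification, the semidirect product $G_1 \ltimes_\rho S_2$ acts transitively by isometries (with $G_1$ acting on $S_2$ via $\rho$ and on $G_1/K$ by left translation, and $S_2$ acting on itself by left translation), with isotropy $K$. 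Each $D \in \mathfrak{d_0(s_2)}$ integrates to a skew-symmetric automorphism of $\mathfrak s_2$ and hence to an isometry of $S_2$, which extends to an isometry of $S$ acting trivially on the $G_1/K$ factor. The bracket relation $[\mathfrak g_1, \mathfrak{d_0(s_2)}] = 0$ translates through the Jacobi identity into the requirement that $D$ commute with $\rho(X)$ on $\mathfrak s_2$ for each $X \in \mathfrak g_1$; the derivation identity $D([X,Y]) = [X, D(Y)]$ (for $X \in \mathfrak s_1$, $Y \in \mathfrak s_2$, using $D(X) = 0$) gives commutation with $\rho(\mathfrak s_1)$, and uniqueness of the extension $\rho: \mathfrak g_1 \to Der(\mathfrak s_2)$ of $ad|_{\mathfrak s_2}$ propagates this to all of $\mathfrak g_1$.

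For the second claim, I would first produce a maximal suitable LR-decomposition of $\mathfrak s''$ directly from the structural analysis in Section \ref{sec:proof of main theorems}. Choose a compatible Levi decomposition $\mathfrak g = (\mathfrak g_{nc} + \mathfrak g_c) + \mathfrak g_2$ of $\mathfrak g := Lie~Isom(S)$. Lemma \ref{lemma: s2 properties} then yields $\mathfrak s'' = \mathfrak s_1 + \mathfrak s_2$ with $\mathfrak s_1 = \mathfrak s'' \cap \mathfrak g_{nc}$ an Iwasawa subalgebra of $\mathfrak g_{nc}$ and $\mathfrak s_2 = \mathfrak s'' \cap \mathfrak g_2$ a $\mathfrak g$-ideal. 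Setting $\mathfrak g_1 := \mathfrak g_{nc}$ in the LR sense, the representation $\rho(X) = ad(X)|_{\mathfrak s_2}$ extends $ad|_{\mathfrak s_2}$ from $\mathfrak s_1$ to all of $\mathfrak g_1$ (well-defined since $\mathfrak s_2$ is a $\mathfrak g$-ideal). Suitability is the content of Proposition \ref{prop: maximal compact of non-compact semi-simple is in stabilizer}: $\mathfrak k(\mathfrak g_{nc}) \subset \mathfrak g_p$ acts skew-symmetrically on $\mathfrak s'' \simeq \mathfrak g / \mathfrak g_p$. Maximality is automatic because $\mathfrak s_1$ arises from the full non-compact semisimple part of the Levi factor of $\mathfrak g$.

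To conclude that the resulting algebra $\mathfrak g_{nc} + \mathfrak{d_0(s_2)} + \mathfrak s_2$ exhausts the full isometry algebra, I would compare it with the decomposition $\mathfrak g = \mathfrak g_{nc} + \mathfrak g_c + \mathfrak s_2 + (\mathfrak g_2 \cap \mathfrak g_p)$ coming from Corollary \ref{cor: stabilizer respects levi decomposition} and Lemma \ref{lemma: s2 properties}(iii). It therefore suffices to identify $\mathfrak{d_0(s_2)} = \mathfrak g_c + (\mathfrak g_2 \cap \mathfrak g_p)$. The inclusion $\supset$ is straightforward: both summands lie in the isotropy $\mathfrak g_p$ (hence act skew-symmetrically), commute with $\mathfrak g_{nc}$ (so vanish on $\mathfrak s_1$), and preserve $\mathfrak s_2$ since it is a $\mathfrak g$-ideal. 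For the reverse inclusion, any $D \in \mathfrak{d_0(s_2)}$ is skew-symmetric and vanishes on $\mathfrak s_1$, hence lies in $\mathfrak g_p$; decomposing $D$ along the compatible Levi decomposition and using that a nonzero $\mathfrak g_{nc}$-component would act nontrivially on $\mathfrak s_1$ forces $D \in \mathfrak g_c + (\mathfrak g_2 \cap \mathfrak g_p)$. Uniqueness of the maximal LR-decomposition up to conjugacy follows from Malcev's theorem on conjugacy of Levi factors (via inner automorphisms from the nilradical, which lies in $\mathfrak s''$) combined with the conjugacy of Iwasawa subalgebras of $\mathfrak g_{nc}$ under $K(\mathfrak g_{nc})$. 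I expect the main obstacle to be the reverse inclusion $\mathfrak{d_0(s_2)} \subset \mathfrak g_c + (\mathfrak g_2 \cap \mathfrak g_p)$, which will require invoking preservation of almost-complete-solvability under standard modification (Theorem \ref{thm: std. mod. preserves almost compl. solv.}), Theorem \ref{thm: admissible almost compl solv impies center in [r,r]}, and Corollary \ref{cor:reductive elements of commutator of solvable} to rule out residual fully-reducible elements sneaking into the commutator subalgebra.
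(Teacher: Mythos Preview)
Your proposal is correct and follows essentially the same route as the paper. For the first claim, both you and the paper realize $S \simeq G/K$ with $G$ built from $G_1$, $A_0(\mathfrak s_2)$, and $S_2$; the paper is slightly more explicit about embedding $G_1$ as a closed subgroup of the linear group $Inn(\mathfrak g_1)\times Aut(\mathfrak s_2)$ so that $K(G_1)$ is genuinely compact, a point you should make explicit. For the second claim, the paper's argument is terse (it simply cites Lemma~\ref{lemma: s2 properties} and conjugacy of Levi factors), while you spell out the identification $\mathfrak{d_0(s_2)} = \mathfrak g_c + (\mathfrak g_2\cap\mathfrak g_p)$ that is implicit there; your anticipated obstacle is already absorbed into Lemma~\ref{lemma: description of N_l(s)} and Corollary~\ref{cor: stabilizer respects levi decomposition}, so once you invoke those the reverse inclusion reduces to showing that an element of $\mathfrak m$ centralizing $\mathfrak s_1 = \mathfrak a + \mathfrak n$ must vanish (use the Cartan involution).
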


\begin{remark} We remind the reader that the standard modification algorithm of Gordon-Wilson relies only on metric Lie algebra data, see Section \ref{sec:prereq isom grp solvmfld}.  Further, in the special case that $\mathfrak s$ is completely solvable, we have that $\mathfrak s = \mathfrak s''$ and so the isometry group can be computed by considering $LR$-decompositions of the original algebra.
\end{remark}

\begin{proof}  As in \cite{AlekseevskyCortes:IsomGrpsOfHomogQuatKahlerMnflds}, we  prove the first part of our theorem by finding a Lie group $G$ with $Lie~G = \mathfrak g$, and compact subgroup $K$ such that $Lie~K = \mathfrak{k(\mathfrak g_1)} +\mathfrak{d_0(s_2)}$, so that we may identify  $S\simeq G/K$.

	Consider the Lie algebra $(\mathfrak g_1+\mathfrak{d_0(s_2)})\ltimes \mathfrak s_2$.  The algebra $\mathfrak g_1$ is a subalgebra of $ad(\mathfrak g_1) \times Der(\mathfrak s_2)$, via the adjoint representation of $\mathfrak g_1$, and so we may consider the group $G_1 < Inn(\mathfrak g_1) \times Aut(\mathfrak s_2)$.  As $G_1$ is semi-simple subgroup of a linear group, it is semi-algebraic and thus a  closed, linear Lie group.
	
	Let $A_0(\mathfrak s_2)$ denote the subgroup of $Aut(S_2)$ with Lie algebra $\mathfrak{d_0(s_2)}$.  By construction, this is a closed semi-algebraic group.
	
	Consider the simply-connected group $S_2$ with Lie algebra $\mathfrak s_2$.  As $Aut(S_2) \simeq Aut(\mathfrak s_2)$, we may form the group
		$$G = G_1A_0(\mathfrak s_2) \ltimes S_2$$
This is a closed subgroup of the linear group $(Inn(\mathfrak g_1) \times Aut(\mathfrak s_2) )\ltimes S_2$ with Lie algebra $\mathfrak g$.

	Let $\mathfrak k = \mathfrak k(\mathfrak g_1) + \mathfrak{d_0(\mathfrak s_2)}$ where $\mathfrak{k(g_1)}$ is the subalgebra appearing above for our suitable LR-decomposition of $\mathfrak s$.  As $\mathfrak{k(g_1)}$  is a maximal compact algebra and the group $G_1$ is linear, the connected subgroup $K(G_1) < G_1$ with Lie algebra $\mathfrak{k(g_1)}$ is compact.  Thus $K = K(G_1)A_0(S_2)$ is compact as it is a product of compact subgroups.  
	
	The quotient space $\mathfrak g/\mathfrak k$ may be naturally identified with $\mathfrak s_2 + \mathfrak g_1/ \mathfrak{k(g_1)}$ since $[\mathfrak g_1, \mathfrak{d_0(s_2)}]=0$.  Endow $\mathfrak g/\mathfrak k$ with the inner product from $\mathfrak s_2 + \mathfrak g_1/ \mathfrak{k(g_1)} \simeq \mathfrak s_2+\mathfrak s_1$.  Using this choice of inner product, we see that  the adjoint action of $K$ consists orthogonal transformations and the inner product on $\mathfrak g/\mathfrak k$ extends to a left $G$-invariant metric on $G/K$.
	
	By construction, $\mathfrak g = \mathfrak s + \mathfrak k$, $S\cap K = \emptyset$, $G=SK$, and so $S\simeq G/K$.  Further, our choice of metric on $G/K$ precisely makes these spaces isometric.  This proves the first claim.
	
	To prove the second claim, we need the hypotheses of the theorem (almost completely solvable and admissible) to guarantee that $Isom(S)$ is linear, see Theorem \ref{thm: linear isom grp}.  Lemma \ref{lemma: s2 properties} shows that the isometry group of such a space does, in fact, arise from an LR-decomposition.  This being from a maximal LR-decomposition follows from the Levi factor being a maximal semi-simple subgroup of the isometry group.  Again, uniqueness up to conjugacy follows from the same result for Levi groups.
	
\end{proof}

\begin{thm}\label{thm: algorithm for full isom group} Let $S$ be a simply-connected, admissible, almost completely solvable group and     $S''=S_1S_2$ be a maximal LR-decomposition of the modification $S''$ in standard position with associated (connected) group of isometries $G_0$ coming from Theorem \ref{thm: LR algorithm} above.  Denote the stabilizer of $e\in S$ by $K_0$.  Then the full isometry group is given by 
	$$G = KG_0$$
where $K$ is the subgroup  $N_{Aut(\mathfrak s_2) \times Aut(\mathfrak g_1)} (\mathfrak k) \cap \mathfrak{so}(\mathfrak s_2 + \mathfrak g_1/\mathfrak k(\mathfrak g_1) )$ of $Aut(\mathfrak s_2) \times Aut(\mathfrak g_1)$ for $\mathfrak k = Lie~K_0$.
\end{thm}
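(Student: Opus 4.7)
The plan is to bootstrap Theorem~\ref{thm: LR algorithm}, which identifies the connected isometry group $G_0$ for a maximal LR-decomposition in standard position, to the full isometry group $G = Isom(S)$. Since $G_0$ acts transitively on $S$ and $Isom(S)$ has compact isotropy, we have $G = G_0 \cdot K'$ where $K' := Isom(S)_e$ is the full stabilizer at $e$ and its identity component is $K_0$. The finite group $K'/K_0$ parameterizes the extra components of $G$ beyond $G_0$, so it suffices to identify $K'$ with the subgroup $K$ of $Aut(\mathfrak s_2) \times Aut(\mathfrak g_1)$ described in the statement.

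The forward direction is to construct a homomorphism $\Phi : K' \to Aut(\mathfrak s_2) \times Aut(\mathfrak g_1)$ with image in $K$. Given $\phi \in K'$, conjugation by $\phi$ preserves the characteristic subgroup $G_0$ and hence induces a Lie algebra automorphism $Ad(\phi)$ of $\mathfrak g_0 = \mathfrak g_1 + \mathfrak{d_0(s_2)} + \mathfrak s_2$. By the uniqueness up to $G_0$-conjugacy of the maximal LR-decomposition asserted in Theorem~\ref{thm: LR algorithm}, after adjusting $\phi$ by an element of $K_0$, we may assume $Ad(\phi)$ preserves both the semisimple factor $\mathfrak g_1$ and the ideal $\mathfrak s_2$, yielding the natural element $\Phi(\phi) \in Aut(\mathfrak s_2) \times Aut(\mathfrak g_1)$. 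Since $\phi$ fixes $e$, its differential at $e$ is orthogonal on $T_eS \simeq \mathfrak s_2 + \mathfrak g_1/\mathfrak k(\mathfrak g_1)$, and since $\phi$ normalizes $K_0$ (as $K_0 \triangleleft K'$), the map $\Phi(\phi)$ normalizes $\mathfrak k = Lie~K_0$. Thus $\Phi(K') \subset K$, and $\Phi$ is injective by the rigidity of isometries, since an isometry fixing $e$ is determined by its derivative at $e$, which is recoverable from $\Phi(\phi)$.

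For the reverse direction, given $\psi = (\psi_2, \psi_1) \in K$, one constructs an isometry $\phi$ of $S$ fixing $e$ with $\Phi(\phi) = \psi$. The pair $\psi$ induces a Lie algebra automorphism of $\mathfrak g_0$ acting as $\psi_2$ on $\mathfrak s_2$, as $\psi_1$ on $\mathfrak g_1$, and on the $\mathfrak{d_0(s_2)}$-factor via the fact that $\psi$ normalizes $\mathfrak k$. This Lie algebra automorphism lifts to an automorphism of the closed linear subgroup $G_0 \subset (Inn(\mathfrak g_1) \times Aut(\mathfrak s_2)) \ltimes S_2$ and preserves $K_0$, so it descends to a diffeomorphism of $G_0/K_0 \simeq S$ fixing $e$. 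The orthogonality condition built into the definition of $K$ ensures this diffeomorphism is an isometry for the left-invariant metric.

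The main obstacle will be, in the reverse direction, verifying that the algebra automorphism induced by $\psi$ genuinely lifts to a well-defined group automorphism of $G_0$ which then descends to $S$. In particular, one must check that the lift respects the center of $G_1$ (which is contained in $K(G_1) \subset K_0$, as was used in the proof of Lemma~\ref{lemma: levi factor of isom is linearizable}) and that the action of $\psi$ on the $\mathfrak{d_0(s_2)}$-factor dictated by the normalization of $\mathfrak k$ is consistent with its action on $\mathfrak s_2$. Once these technical lifting and compatibility checks are handled, the passage from orthogonality at the tangent space to isometry for the full metric is automatic, since everything is compatible with left-translation by $G_0$.
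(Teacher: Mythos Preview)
Your overall strategy—identify the full isotropy $K'$ with the subgroup $K\subset Aut(\mathfrak s_2)\times Aut(\mathfrak g_1)$ described in the statement—matches the paper's, but the execution of the forward direction has a real gap, and the paper fills it with a different and cleaner mechanism.

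The problematic step is: ``By the uniqueness up to $G_0$-conjugacy of the maximal LR-decomposition \ldots, after adjusting $\phi$ by an element of $K_0$, we may assume $Ad(\phi)$ preserves both $\mathfrak g_1$ and $\mathfrak s_2$.'' The conjugacy you are invoking (Malcev/Mostow, Theorem~\ref{thm: mostow - mfr are conjugate}) produces a conjugating element in the \emph{nilradical} of $G_0$, which lies in $S''$, not in $K_0$. Multiplying $\phi$ by such an element moves the basepoint, so you no longer have an isotropy element, and you lose both the orthogonality condition and the normalization of $\mathfrak k$. Moreover, even if such an adjustment existed elementwise, it is not clear your $\Phi$ would then be a well-defined homomorphism on $K'$ rather than merely a map of cosets.

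The paper avoids this entirely by reversing the logic: instead of adjusting each $\phi$, it chooses the Levi factor $\mathfrak g_1$ once and for all to be stable under conjugation by the whole isotropy $K$ (this is Mostow's Corollary~5.2 on fully reducible subgroups). With that choice fixed, no per-element adjustment is needed. For $\mathfrak s_2$ the paper does not use uniqueness of LR-decompositions at all; it uses the intrinsic characterization from Lemma~\ref{lemma: s2 properties}\,(iv), namely that $\mathfrak s_2$ is the $B_{\mathfrak g_2}$-orthogonal complement of $\mathfrak g_2\cap\mathfrak k$ in $\mathfrak g_2$. Since $K$ normalizes $K_0$ (so preserves $\mathfrak g_2\cap\mathfrak k$) and $Ad_K$ respects the Killing form, $\mathfrak s_2$ is automatically $Ad_K$-stable. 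With both $\mathfrak g_1$ and $\mathfrak s_2$ stable under $K$, the $K$-action on $S\simeq S_2\times G_1/(K\cap G_1)$ visibly lands in the group described. Your reverse direction and the technical lifting concerns you flag are essentially the content of the first half of Theorem~\ref{thm: LR algorithm}'s proof; the paper simply gestures at this with ``as above.''
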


\begin{remark} Observe that $K$ is computed purely from metric Lie algebra information.  As $G_0$ is also computed from the same data, we see that the full isometry group $G$ can be reconstructed solely from metric Lie algebra information.
\end{remark}

\begin{proof} Let $K$ be the isotropy of the point $e\in S$; this group contains $K_0$.  By Corollary 5.2 of \cite{Mostow:FullyReducibleSubgrpsOfAlgGrps}, we may assume that $G_1$ is stable under conjugation by $K$.  

To prove our claim, it suffices to prove $S_2$ is invariant under conjugation by $K$.  Then, as above, the $K$ action on $S$ will be the same as the $K$-action on
	$$S_2 \times G_1/K\cap G_1$$
where $K$ acts on $S_2$ and $G_1$ by conjugation.  As with symmetric spaces, the isometric action $G_1/K\cap G_1$ comes from automorphisms of $\mathfrak g_1$ which normalize $\mathfrak k \cap \mathfrak g_1$.

By Lemma \ref{lemma: s2 properties}, $\mathfrak s_2$ is the orthogonal complement of $\mathfrak g_2 \cap \mathfrak k$ relative to the Killing form $B_{\mathfrak g_2}$.  As $\mathfrak g_2 \cap \mathfrak k$ is stable under $Ad_K$ and the Killing form is invariant under automorphisms, the ideal $\mathfrak s_2$ is invariant under $Ad_K$.  This implies $S_2$ is stable under conjugation by $K$.

\end{proof}

\section{Applications to homogeneous Ricci solitons}
\label{sec: homogeneous Ricci solitons}

In this section, we apply the above work to the problem of classifying homogeneous Ricci solitons by showing that any homogeneous Ricci soliton which is locally isometric to a solvmanifold is, in fact, a solvsoliton (i.e. an algebraic Ricci soliton on a solvable Lie group), see Theorem \ref{thm: strong gen alek conj reduced to s.c. case}.

Let $S$ be a simply-connected completely solvable Lie group with Ricci soliton metric.  By the work \cite{Jablo:HomogeneousRicciSolitons}, we know that $S$ is a solvsoliton.   Furthermore, it was shown there that if a homogeneous Ricci soliton admits a transitive solvable group of isometries, then it must be a simply-connected solvable Lie group with left-invariant metric.  Our strategy is to prove the isometry group of a solvsoliton is linear, thus making it a strongly solvable space (Proposition \ref{prop: linear isom implie strongly solvable}).  Then the only homogeneous spaces covered by the solvsoliton must be solvmanifolds themselves (Proposition \ref{prop: homog space covered by strong solv is solv}).

\subsection{Decomposing the group}
Recall, every solvsoliton $S$ is isometric to a completely solvable Lie group \cite{Lauret:SolSolitons}. Thus, if the group in standard position were admissible,  we would be done.  From here on, we assume that our solvsoliton $S$ is completely solvable.

If $\mathfrak{z(s)} \not \subset [\mathfrak s,\mathfrak s]$, then we may decompose $\mathfrak s = \mathfrak t + \mathfrak u$ as a sum of subalgebras where $\mathfrak u \subset \mathfrak{z(s)}$,  $\mathfrak{z(t)} = \mathfrak{z(s)} \cap \mathfrak t \subset [\mathfrak t, \mathfrak t] = [\mathfrak s,\mathfrak s]$, and    $\mathfrak t \cap \mathfrak u = \emptyset$.  Observe that both $\mathfrak{t}$ and $\mathfrak{u}$ are ideals and $\mathfrak t$ is admissible.  By Theorem 3.4 of \cite{Jablo:ConceringExistenceOfEinstein}, both $\mathfrak t$ and $\mathfrak u$ admit solvsoliton metrics and, in fact, the solvsoliton metric on $\mathfrak s$ is such that 
	$$\mathfrak s = \mathfrak t \oplus \mathfrak u$$
is an orthogonal direct sum.   (We take this opportunity to remark that Theorem 3.4, loc. cit., as stated there, is incorrect.  It should read: $\mathfrak s$ is a non-flat solvsoliton if and only if both $\mathfrak t$, $\mathfrak u$ are solvsolitons and at least one is non-flat.)

Denote by $T$, $U$ the subgroups of $S$ with Lie algebras $\mathfrak t$, $\mathfrak u$, respectively.  The above implies $S = T \times U$ as a Riemannian product.

\begin{prop}\label{prop: isom of solvsoliton splits} The isometry group of $S$ endowed with the solvsoliton metric is
	$$Isom(S) = Isom(T) \times Isom(U)$$
Consequently, $Isom(S)$ is linear.
\end{prop}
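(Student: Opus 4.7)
The plan is to apply the de~Rham decomposition theorem to $S$, using the splitting $S = T \times U$ and the admissibility hypothesis $\mathfrak{z}(\mathfrak{t}) \subset [\mathfrak{t}, \mathfrak{t}]$ to show that $U$ is precisely the Euclidean de~Rham factor of $S$, from which the desired product decomposition of the isometry group follows.

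The inclusion $Isom(T) \times Isom(U) \subseteq Isom(S)$ is automatic from the Riemannian product structure. The factor $U$, being a simply-connected abelian Lie group with left-invariant metric, is isometric to Euclidean space and therefore embeds into the Euclidean de~Rham factor of $S$. To show that $U$ is \emph{all} of this factor, I need to check that $T$ admits no nonzero parallel vector field. Since $T$ acts on itself transitively by isometries via left translations, the finite-dimensional space of parallel vector fields on $T$ is preserved by this action, and a standard argument shows that every such parallel field is left-invariant. By the Koszul formula, a left-invariant $X \in \mathfrak{t}$ is parallel if and only if
\[
  \langle [Y,X], Z\rangle - \langle [X,Z], Y\rangle + \langle [Z,Y], X\rangle = 0 \quad\text{for all } Y, Z \in \mathfrak{t}.
\]
Setting $Y=Z$ forces $ad\,X$ to be skew-symmetric; combined with complete solvability of $\mathfrak{t}$ (which requires $ad\,X$ to have real spectrum), this yields $ad\,X = 0$ and hence $X \in \mathfrak{z}(\mathfrak{t})$. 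The displayed equation then collapses to $X \perp [\mathfrak{t}, \mathfrak{t}]$, and admissibility forces $X = 0$.

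With $T$ having trivial Euclidean factor, the Euclidean de~Rham factor of $S$ is exactly $U$, while the remaining irreducible non-flat factors (which cannot be isometric to any piece of the Euclidean $U$) come from $T$. The de~Rham theorem on isometry groups of simply-connected complete manifolds then yields $Isom(S) = Isom(U) \times Isom(T)$. Linearity of $Isom(S)$ is then immediate: $Isom(U)$ is the Euclidean motion group, and $Isom(T)$ is linear by Theorem~\ref{thm: linear isom grp}, which applies to $T$ since it is simply-connected, admissible, and completely solvable (in particular almost completely solvable). The step I expect to require the most care is the identification of all parallel vector fields on $T$ with left-invariant ones, needed so that the space of parallel directions at the identity is fully captured by the Koszul calculation above.
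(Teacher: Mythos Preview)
Your de Rham strategy is conceptually appealing and genuinely different from the paper's, but there is a real gap at exactly the step you flagged. The assertion that every parallel vector field on $T$ is left-invariant is not a standard fact and does not follow from transitivity alone. What transitivity gives you is an orthogonal representation $\rho\colon T\to O(\mathcal P)$ on the space $\mathcal P$ of parallel fields (parallel fields have constant inner products and left translations are isometries), and left-invariance of all parallel fields is equivalent to $\rho$ being trivial. But a simply-connected solvable group can map nontrivially into a torus (e.g.\ $\mathbb R\to S^1$), and complete solvability constrains only the \emph{adjoint} representation, not an arbitrary one such as $\rho$. Consequently your Koszul computation establishes only that no nonzero \emph{left-invariant} field on $T$ is parallel; it does not rule out a parallel field $X$ whose left-invariant extension $X(e)^L$ fails to be parallel, since $(\nabla_Y X)(e)=0$ depends on the first-order jet of $X$ at $e$, not merely on $X(e)$. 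The conclusion you want --- that $T$ has no Euclidean de Rham factor --- is in fact true, but reaching it along your route requires a further, non-routine argument.

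For comparison, the paper avoids de Rham and holonomy entirely. It first shows, via a short real-GIT minimal-vector lemma combined with the decomposition $G=G_{nc}F$, that $\mathfrak z(\mathfrak s)$ is $Ad_G$-stable. It then characterizes $\mathfrak u$ inside $\mathfrak z(\mathfrak s)$ by a pointwise curvature condition at $e$, namely $\mathfrak u=\{X\in\mathfrak z(\mathfrak s):R_X=0\}$, using the explicit formula $K(X,Y)=\tfrac14\,|ad\,Y^{*}(X)|^{2}$ valid for central $X$. This makes $T_eU$ (hence also $T_eT$) invariant under the isotropy $K$, whence $\mathfrak u+\mathfrak k$ and $\mathfrak t+\mathfrak k$ are subalgebras, $UK$ and $TK$ are subgroups, and $G=SK$ preserves the product $T\times U$. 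The paper thus works directly with the solvable structure and stabilizer rather than with abstract parallel fields.
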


To prove this result, we need several technical lemmas.

\subsection*{Step 1.}  First we will show that the center of $\mathfrak s$ is stable under the adjoint action of $G$.

\begin{lemma} Let $G_{nc}$ denote a connected semi-simple Lie group of non-compact type.  Let $\rho: G_{nc} \to GL(V)$ be a finite dimensional representation.  If for some $p\in V$, $\rho(G_{nc})\cdot p$ is compact, then $\rho(G_{nc})\cdot p = p$.
\end{lemma}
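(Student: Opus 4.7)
The plan is to work with an Iwasawa decomposition $G_{nc}=KAN$ and show that the stabilizer $H=\{g\in G_{nc}:\rho(g)p=p\}$ contains every $K$-conjugate of $AN$, and that these conjugates together generate all of $G_{nc}$. Compactness of the orbit is the only input, and it is used twice: once to force $\rho(A)p=p$ via a weight-space argument, and once to force $\rho(N)p=p$ via a unipotent/polynomial argument.

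First I would show $\rho(A)p = p$. Since $A$ is an $\mathbb{R}$-split torus, $\rho(A)$ is simultaneously diagonalizable over $\mathbb{R}$, giving a weight-space decomposition $V=\bigoplus_{\chi}V_{\chi}$ where the $\chi:A\to\mathbb{R}_{>0}$ are real characters. Writing $p=\sum_{\chi}p_{\chi}$, the orbit $\rho(A)p=\{\sum_{\chi}\chi(a)p_{\chi}:a\in A\}$ is contained in the compact set $\rho(G_{nc})\cdot p$, hence is bounded. Since any nontrivial character of $A\cong\mathbb{R}^{r}$ is unbounded, this forces $p_{\chi}=0$ for every nontrivial $\chi$, so $p$ lies in the zero weight space and $\rho(A)p=p$.

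Next I would show $\rho(N)p=p$. For any $X\in\mathrm{Lie}(N)$, the image $\rho_{*}(X)$ is nilpotent (as the differential of a representation of a unipotent group), so the curve $t\mapsto\rho(\exp tX)p=\exp(t\rho_{*}(X))p$ is polynomial in $t$. Since this curve lies in the compact orbit $\rho(G_{nc})\cdot p$ for all $t\in\mathbb{R}$, it must be constant, whence $\rho_{*}(X)p=0$. As $N$ is connected and generated by one-parameter subgroups, $\rho(N)p=p$, and combined with the previous step we get $H\supseteq AN$.

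Finally I would exploit $K$-conjugation. For any $k\in K$, the vector $q=\rho(k)p$ lies in the compact orbit $\rho(G_{nc})\cdot p$, so the same argument applied at $q$ gives $\rho(AN)q=q$, i.e.\ $k^{-1}ANk\subseteq H$. Hence $\mathrm{Lie}(H)\supseteq\sum_{k\in K}\mathrm{Ad}(k^{-1})(\mathfrak{a}+\mathfrak{n})\supseteq\mathrm{Ad}(K)\mathfrak{a}$. Using the Cartan decomposition $\mathfrak{g}_{nc}=\mathfrak{k}+\mathfrak{p}$, together with the standard facts that $\mathrm{Ad}(K)\mathfrak{a}=\mathfrak{p}$ (every element of $\mathfrak{p}$ is $K$-conjugate to $\mathfrak{a}$) and that $[\mathfrak{p},\mathfrak{p}]=\mathfrak{k}$ for a semi-simple Lie algebra of non-compact type, we obtain $\mathrm{Lie}(H)=\mathfrak{g}_{nc}$, so $H=G_{nc}$ and $\rho(G_{nc})\cdot p=\{p\}$. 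The main obstacle, I expect, is this last step: after the $A$- and $N$-arguments one only has $H\supseteq AN$, and arguing that $H$ is not merely a proper subgroup containing the minimal parabolic requires accumulating enough $K$-conjugates to capture the $\mathfrak{m}$ part of $\mathfrak{g}_{nc}$; the fact $\mathrm{Ad}(K)\mathfrak{a}=\mathfrak{p}$ and $[\mathfrak{p},\mathfrak{p}]=\mathfrak{k}$ is what removes this obstruction cleanly.
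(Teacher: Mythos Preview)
Your argument is correct. The one place where the justification is a touch loose is the parenthetical ``as the differential of a representation of a unipotent group'': what you really use is that each $X\in\mathfrak n$ is ad-nilpotent in the semisimple algebra $\mathfrak g_{nc}$, hence $\rho_*(X)$ is nilpotent in any finite-dimensional representation (preservation of the abstract Jordan decomposition, or the weight-shift argument with respect to $\mathfrak a$). With that phrasing your polynomial-curve step is watertight.

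The paper takes a genuinely different route. Instead of the Iwasawa decomposition and separate arguments for $A$ and $N$, it invokes real Geometric Invariant Theory in the sense of Richardson--Slodowy: choose an inner product on $V$ adapted to the Cartan decomposition so that $\rho_*(\mathfrak k)$ is skew and $\rho_*(\mathfrak p)$ is symmetric; a compact orbit is closed and hence contains a minimal-norm vector, and the GIT lemma says that $\mathfrak p$ annihilates any minimal vector. From there both proofs finish identically via $[\mathfrak p,\mathfrak p]=\mathfrak k$. Your approach is more elementary and self-contained---no appeal to the minimal-vector machinery---at the cost of an extra $K$-conjugation step and the fact $\mathrm{Ad}(K)\mathfrak a=\mathfrak p$. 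The paper's approach is shorter once one is willing to quote the GIT results, and it lands on $\mathfrak p\cdot p=0$ directly without passing through $AN$.
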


\begin{proof}  To prove this lemma, we employ  tools from real Geometric Invariant Theory.  Our primary reference will be \cite{RichSlow}, see also \cite{EberleinJablo}.  We abuse notation and represent the induced Lie algebra representation by $\rho$.

As $\rho(G_{nc})$ is semi-simple, we may endow $V$ with an inner product $\ip{\cdot , \cdot}$ such that $\rho(\mathfrak{g}_{nc}) = \mathfrak k + \mathfrak p$, where $\mathfrak k = \{ X\in \rho(\mathfrak g_{nc}) \ | \ X^t = -X \}$ and $\{ X\in \rho(\mathfrak g_{nc}) \ | \ X^t = X \}$.

We say a point $p\in V$ is minimal if $|| g\cdot p || \geq ||p||$ for all $g\in \rho(G_{nc})$.  By Theorem 4.4 of \cite{RichSlow}, our compact orbit $\rho(G_{nc})\cdot p$ is closed and so contains a minimal point; assume $p$ is said minimal point.  Further, compactness of this orbit forces $\mathfrak p \subset \rho(\mathfrak g_{nc})_p$, see Lemmas 3.1 \& 3.2, loc. cit.

Finally, since $\rho(\mathfrak g_{nc})$ is also a semi-simple Lie algebra of non-compact type, we see that $\mathfrak k = [\mathfrak p,\mathfrak p]$ and hence $\rho(\mathfrak g_{nc})$ annihilates $p$; the connectedness of $G_{nc}$ now yields   $\rho(G_{nc})\cdot p = p$.
\end{proof}

In the following lemma, we have no algebraic restrictions on the solvable group in standard position.

\begin{lemma} Let $S$ be a solvable Lie group (with left-invariant metric) in standard position within its isometry group $G$.  The central subalgebra $\mathfrak{z(s)}$ is stable under $Ad_G$.
\end{lemma}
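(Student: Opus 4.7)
The plan is to decompose $G = G_{nc} F$, where $F$ is the connected subgroup with Lie algebra $\mathfrak f = \mathfrak m + \mathfrak a + \mathfrak n + \mathfrak g_c + \mathfrak g_2 = N_\mathfrak l(\mathfrak s) \ltimes \mathfrak s$ from Lemma \ref{lemma: properties of f}, and then handle the two factors separately: I will show $Ad(F)$ preserves the subspace $\mathfrak z(\mathfrak s)$, while $Ad(G_{nc})$ fixes it pointwise. This decomposition of $G$ is available because $G = G_1 G_2 = G_{nc} G_c G_2 \subset G_{nc} F$.

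The invariance under $F$ is immediate from derivation calculus. Any $X \in \mathfrak f$ decomposes as $X = D + Y$ with $D \in N_\mathfrak l(\mathfrak s)$ a skew-symmetric derivation of $\mathfrak s$ and $Y \in \mathfrak s$; for $Z \in \mathfrak z(\mathfrak s)$, the semidirect-product bracket yields $[X,Z] = D(Z) + [Y,Z] = D(Z)$, which lies in $\mathfrak z(\mathfrak s)$ since derivations preserve centers. Exponentiating shows $Ad(F)$ stabilizes $\mathfrak z(\mathfrak s)$ as a subspace.

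The pointwise fixing by $G_{nc}$ is where the preceding compact-orbit lemma enters. Fix $Z \in \mathfrak z(\mathfrak s)$ and use the Iwasawa decomposition $G_{nc} = K A N$. Because $\mathfrak s$ is in standard position, it contains $\mathfrak a + \mathfrak n$; since $Z$ commutes with every element of $\mathfrak s$, both $A$ and $N$ fix $Z$ under the adjoint action, so $Ad(G_{nc}) \cdot Z = Ad(K) \cdot Z$. Now $K = K(G_{nc})$ has Lie algebra compactly embedded in $\mathfrak g$, so its image under $Ad$ is a compact subgroup of $GL(\mathfrak g)$, making the orbit $Ad(G_{nc}) \cdot Z$ compact in $\mathfrak g$. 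The preceding lemma, applied to the adjoint representation of the semi-simple group $G_{nc}$ of non-compact type, then forces $Ad(G_{nc}) \cdot Z = \{Z\}$.

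The point I expect to require the most care is the standard-position inclusion $\mathfrak a + \mathfrak n \subset \mathfrak s$: this should follow from the description of $\mathfrak s$ as the $B_\mathfrak f$-orthogonal complement of the compactly embedded subalgebra $\mathfrak f \cap \mathfrak g_p$, together with the fact that $\mathfrak a + \mathfrak n$ meets every such compactly embedded subalgebra only trivially. A secondary subtlety is that, although $K(G_{nc})$ itself need not be compact as a subgroup of $G$, its adjoint image in $GL(\mathfrak g)$ is compact by the very definition of a compactly embedded subalgebra, which is what the compact-orbit argument actually requires.
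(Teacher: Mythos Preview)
Your proposal is correct and follows essentially the same approach as the paper: decompose $G = G_{nc}F$, observe that $F$ normalizes $S$ and hence preserves $\mathfrak z(\mathfrak s)$, and use the Iwasawa decomposition together with the preceding compact-orbit lemma to show $Ad(G_{nc})$ fixes $\mathfrak z(\mathfrak s)$ pointwise. Your added detail on the $F$-invariance via derivations and your explicit flagging of the two subtleties (the inclusion $\mathfrak a + \mathfrak n \subset \mathfrak s$, handled in the paper by citing Lemma~\ref{lemma: properties of f}, and the compactness of $Ad(K)$, which the paper phrases as ``$Ad_{G_{nc}}$ is linear'') match the paper's reasoning exactly.
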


\begin{proof}
	Recall, we may write $G = G_{nc}F$ where $G_{nc}$ is the semi-simple factor of non-compact type within a given Levi factor $G_1$ of $G$ and $F$ normalizes $S$, see Lemma \ref{lemma: properties of f}.  Clearly, as $F$ normalizes $S$, we have that $\mathfrak{z(s)}$ is preserved under $Ad_F$.  So it suffices to prove stability under $Ad_{G_{nc}}$.
	
	We may choose $\mathfrak g_1$ such that $\mathfrak s$ contains some Iwasawa subalgebra $i(\mathfrak g_{nc})$ of $\mathfrak g_{nc}$, see Lemma \ref{lemma: properties of f}.  Consider the Iwasawa decompositions $G_{nc} = K i(G_{nc})$ and $Ad_{G_{nc}} = Ad_K Ad_{i(G_{nc})}$.  Since $Ad_{G_{nc}}$ is linear, the subgroup $Ad_K$ is compact.  Hence, for $X\in \mathfrak{z(s)}$,  the orbit $Ad_{G_{nc}} (X) = Ad_K(X)$ is compact.  By the previous lemma, we see that $X$ is fixed by $Ad_{G_{nc}}$.  Thus, $\mathfrak{z(s)}$ is stabilized by $Ad_G$.	
\end{proof}

\subsection*{Step 2.}  Next we  show that the $G = Isom(S)$ preserves each of the factors in $S=T\times U$; i.e. $G = Isom(T)\times Isom(U)$.  To study the two submanifolds $T$ and $U$, we will consider their orbits on $G/K$, where $K$ is the isotropy at $e\in S$.  First we give a geometric characterization of the tangent spaces at the identity of these two orbits.

Recall, for $X\in \mathfrak{z(s)}$, we have the following equation for the sectional curvature
	$$ K(X,Y) = \frac{1}{4} | ad~Y^* (X)|^2   $$
for all $Y\in \mathfrak s$.  This follows from \cite[Eqn. 7.30]{Besse:EinsteinMflds}, where $ad~Y^*$ denotes the metric adjoint of $ad~Y$.

\begin{lemma}  Let $R_X$ denote the curvature operator at $X\in T_eS$.  Then
	$$ \{ X\in \mathfrak{z(s)} \ | R_X = 0 \} = \mathfrak u = \mathfrak z \ominus [\mathfrak s,\mathfrak s]$$
\end{lemma}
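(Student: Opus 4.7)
The plan is to read off both inclusions directly from the sectional curvature identity $K(X,Y) = \tfrac{1}{4}|ad~Y^*(X)|^2$ just recalled, and, for the direction that needs a slightly stronger statement, to upgrade from sectional curvatures to the full curvature tensor via the Koszul formula.

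First I would handle the forward inclusion ($R_X = 0 \Rightarrow X \in \mathfrak{u}$). Since $R_X = 0$ forces every sectional curvature $K(X,Y)$ to vanish, and the right-hand side of the displayed formula is a squared norm, we immediately get $ad~Y^*(X) = 0$ for every $Y \in \mathfrak{s}$. Unwinding via $\langle ad~Y^*(X), Z \rangle = \langle X, [Y,Z] \rangle$, this is equivalent to $\langle X, [Y,Z] \rangle = 0$ for all $Y, Z \in \mathfrak{s}$, i.e., $X \perp [\mathfrak{s},\mathfrak{s}]$. Combined with $X \in \mathfrak{z(s)}$, this gives $X \in \mathfrak{z(s)} \ominus [\mathfrak{s},\mathfrak{s}] = \mathfrak{u}$.

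For the reverse inclusion I would take $X \in \mathfrak{z(s)}$ with $X \perp [\mathfrak{s},\mathfrak{s}]$ and show $X$ is actually a parallel vector field (which is stronger than needed). The Koszul formula for a left-invariant metric,
\[
2\langle \nabla_A B, C \rangle = \langle [A,B], C \rangle - \langle [B,C], A \rangle + \langle [C,A], B \rangle,
\]
vanishes in all three terms once one of $A, B, C$ is our $X$: the brackets $[X, \cdot\,]$ vanish by centrality, while $\langle [B,C], X \rangle$ vanishes because $X \perp [\mathfrak{s},\mathfrak{s}]$. Hence $\nabla_A X = 0 = \nabla_X A$ for every left-invariant $A$, so $X$ extends to a parallel vector field on $S$, and therefore $R(X,\cdot)\cdot \equiv 0$; in particular $R_X = 0$.

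The argument is essentially a one-line computation once the curvature formula is in hand, so there is no real obstacle. The only mild subtlety is a convention issue: depending on whether $R_X$ means the Jacobi operator $Y \mapsto R(Y,X)X$, the vanishing of all sectional curvatures through $X$, or the vanishing of $R(X,\cdot,\cdot,\cdot)$, one needs either the forward or the backward half of the above; both are covered by combining the sectional curvature identity (for the forward direction) with the parallel-vector-field observation (for the reverse direction), so the proof is insensitive to the choice.
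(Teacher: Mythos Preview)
Your proof is correct, and the forward inclusion is exactly the paper's argument. For the reverse inclusion the paper simply runs the same chain of equivalences backwards: from $X\perp[\mathfrak s,\mathfrak s]$ one gets $ad~Y^*(X)=0$ for all $Y$, hence $K(X,Y)=0$ for all $Y$, and then $R_X=0$ follows because the Jacobi operator $Y\mapsto R(Y,X)X$ is symmetric and thus determined by its quadratic form. Your detour through the Koszul formula to show that $X$ is parallel is a genuinely different (and slightly longer) route, but it buys you a stronger conclusion and, as you observe, makes the argument independent of which operator $R_X$ is meant.
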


\begin{proof}  For $X\in\mathfrak{z(s)}$, the following are equivalent
\begin{eqnarray*}
	R_X &=& 0 \\
	K(X,Y) &=& 0 \quad \mbox{ for all } Y\in \mathfrak s\\
	ad~Y^*X &=& 0 \quad \mbox{ for all } Y\in\mathfrak s\\
	\ip{X,[Y,Z]} &=& 0 \quad \mbox{ for all } Y,Z \in\mathfrak s\\
	X &\perp& [\mathfrak s,\mathfrak s]
\end{eqnarray*}
as desired.
\end{proof}

Since $\mathfrak{z(s)}$ is $Ad_G$ stable, it is $Ad_K$ stable.  Now, the above shows that $T_e U$ is $Ad_K$ stable as $K$ acts by isometries.  Furthermore, $K$ preserves the orthogonal complement of this space, namely, $T_eT$.  We are careful to note that the subspaces which are stable are subspaces of  $T_{eK}G/K$ and not necessarily the Lie algebras as subspaces of $\mathfrak g$.  However, we do have
	$$\mathfrak u + \mathfrak k \quad \mbox{ and } \quad \mathfrak t + \mathfrak k$$
are stable under $Ad_K$ in $\mathfrak g$, where $\mathfrak k = Lie~K$, and so are Lie subalgebras.  This implies $UK$ and $TK$ are subgroups of $G$, hence  stable under conjugation by $K$, and thus their orbits on $G/K$ are stable under the left-$K$ action.  That is, $K$ preserves the Riemannian product $T\times U$.

Finally, as $S=T\times U$ and $G=SK$, we see that $G$ preserves the Riemannian product $T\times U$ which proves Proposition \ref{prop: isom of solvsoliton splits}.

\section{Applications to compact quotients of solvmanifolds}
\label{sec: applications to compact quot of solv}

\subsection*{Proof of Theorem \ref{thm: general finite volume quotient is symmetric}}
It is well-known that $(iv) \Longrightarrow   (i) \Longrightarrow (ii) \Longrightarrow (iii)$.  We will show that $(iii)\Longrightarrow(iv)$.

We begin by observing that $\mathfrak s$ is admissible as the hypothesis of positivity  implies $\mathfrak{z(s)} = 0$.  Notice also that the nilradical of $\mathfrak s$ is $\mathfrak{n(s)} = [\mathfrak s,\mathfrak s]$ as the positive element  of $\mathfrak s$ is non-singular on $\mathfrak{n(s)}$.

\begin{lemma} Let $\mathfrak s$ be an almost completely solvable, positive  Lie algebra with inner product and $\mathfrak s'$ the standard modification.  Then $\mathfrak s'$ is also almost completely solvable and positive.
\end{lemma}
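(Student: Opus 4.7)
My plan is as follows. Almost complete solvability of $\mathfrak{s}'$ is already Theorem \ref{thm: std. mod. preserves almost compl. solv.}, so only positivity needs work. First I would record a preliminary observation: positivity of $\mathfrak{s}$ forces $\mathfrak{n(s)} = [\mathfrak{s},\mathfrak{s}]$, since a positive $X$ has $ad~X|_{\mathfrak{n(s)}}$ invertible, so $\mathfrak{n(s)} = ad~X(\mathfrak{n(s)}) \subset [\mathfrak{s},\mathfrak{s}]$, with the reverse inclusion automatic.

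Recall that $\mathfrak{s}' = (Id + \phi)(\mathfrak{s})$ for a normal modification $\phi : \mathfrak{s} \to \mathfrak{z}(N_{\mathfrak{l}}(\mathfrak{s}))$ which vanishes on $[\mathfrak{s},\mathfrak{s}]$ and whose image acts on $\mathfrak{s}$ by skew-symmetric derivations. The strategy is to show that $X' := X + \phi(X) \in \mathfrak{s}'$ is a positive element of $\mathfrak{s}'$. Two ingredients are needed: an identification of $\mathfrak{n(s')}$ inside the ambient algebra $\mathfrak{r} := \mathfrak{z}(N_{\mathfrak{l}}(\mathfrak{s})) \ltimes \mathfrak{s}$, and an eigenvalue computation for $ad~X'$ on it.

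The crucial step, and the main obstacle, is the claim $\mathfrak{n(s')} = \mathfrak{n(s)}$. The inclusion $\mathfrak{n(s)} \subset \mathfrak{n(s')}$ is the easy direction: $\mathfrak{n(s)} = [\mathfrak{s},\mathfrak{s}] \subset \mathfrak{s}'$ by the definition of normal modification, is nilpotent, and is an ideal of $\mathfrak{r}$ because each $D \in \mathfrak{z}(N_{\mathfrak{l}}(\mathfrak{s}))$ is a derivation of $\mathfrak{s}$ and such derivations land in the nilradical (Lemma \ref{lemma: derivation of solvabe goes to nilradical}); in particular it is an ideal of $\mathfrak{s}' \subset \mathfrak{r}$. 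For the reverse inclusion I would mimic the Lie's theorem bookkeeping from the proof of Theorem \ref{thm: std. mod. preserves almost compl. solv.}: the quotient $\mathfrak{r}/\mathfrak{s}'$ is abelian, so any $Z \in \mathfrak{n(s')}$ is automatically $ad$-nilpotent on $\mathfrak{r}$. Writing $Z = Y + \phi(Y)$ with $Y \in \mathfrak{s}$ and applying Lie's Theorem to $ad~\mathfrak{r} \subset \mathfrak{gl}(\mathfrak{r}^{\mathbb{C}})$, the diagonal entries of $ad~Y + ad~\phi(Y)$ on $\mathfrak{s}^{\mathbb{C}}$ must all vanish. Since $ad~\phi(Y)$ contributes purely imaginary diagonal entries and $\mathfrak{s}$ is almost completely solvable, the two summands must vanish separately; this yields $\phi(Y) = 0$ and $Y \in \mathfrak{n(s)}$.

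Once $\mathfrak{n(s')} = \mathfrak{n(s)}$ is in hand, positivity of $X'$ reduces to a one-line computation in the same Lie-theorem basis (adapted to the chain $\mathfrak{n(s)}^{\mathbb{C}} \subset \mathfrak{s}^{\mathbb{C}} \subset \mathfrak{r}^{\mathbb{C}}$, with $ad~\mathfrak{r}$ upper triangular and $ad~\mathfrak{z}(N_{\mathfrak{l}}(\mathfrak{s}))$ diagonal): the diagonal entries of $ad~X'$ on the $\mathfrak{n(s)}$-block are sums of the diagonal entries of $ad~X|_{\mathfrak{n(s)}}$, which have positive real part by positivity of $X$, and those of $ad~\phi(X)|_{\mathfrak{n(s)}}$, which are purely imaginary. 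Hence every eigenvalue of $ad~X'|_{\mathfrak{n(s')}}$ has positive real part, so $X'$ is a positive element of $\mathfrak{s}'$.
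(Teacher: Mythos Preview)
Your argument is correct and hits the same two milestones as the paper: first $\mathfrak{n(s')}=\mathfrak{n(s)}$, then a positive element transfers. The tactics differ. The paper selects an $N_{\mathfrak l}(\mathfrak s)$-invariant complement $\beta$ to $[\mathfrak s,\mathfrak s]$ in $\mathfrak s$ and notes that, since every derivation of $\mathfrak s$ is valued in $\mathfrak{n(s)}=[\mathfrak s,\mathfrak s]$, the whole of $N_{\mathfrak l}(\mathfrak s)$ vanishes on $\beta$; hence for $Y\in\beta$ the operators $ad~Y$ and $D$ commute, and the eigenvalues of $ad(Y+D)$ are literal sums of eigenvalues, so the separation into ``real part from $Y$'' and ``imaginary part from $D$'' comes from commutativity rather than a basis choice. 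You instead recycle the upper-triangular bookkeeping of Theorem~\ref{thm: std. mod. preserves almost compl. solv.}, reading the separation off the diagonal in a Lie basis adapted to $\mathfrak{n(s)}^{\mathbb C}\subset\mathfrak s^{\mathbb C}\subset\mathfrak r^{\mathbb C}$ with $ad~\mathfrak z(N_{\mathfrak l}(\mathfrak s))$ diagonal. Your route is more uniform with the surrounding material and avoids introducing $\beta$; the paper's route exposes the commutation structure cleanly and does not require tracking which ideals the basis respects. The content is the same.
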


\begin{proof}
The fact that $\mathfrak s'$ is almost completely solvable is the content of Theorem \ref{thm: std. mod. preserves almost compl. solv.}.

Recall, the standard modification is a normal modification (Prop.~\ref{prop: properties of std. mod.}) and so $\mathfrak s'$ is spanned by $[\mathfrak s,\mathfrak s]$ together with $\{Y_1 + D_1, \dots , Y_k + D_k \}$, where $\{Y_1, \dots, Y_k\}$ form a basis of any complement $\beta$ to $[\mathfrak s,\mathfrak s]$ in $\mathfrak s$ and $D_i\in N_l(\mathfrak s)$.  Here $N_l(\mathfrak s)$ denotes the skew-symmetric derivations relative to the given inner product.

Recall that any derivation $D\in Der(\mathfrak s)$ preserves $[\mathfrak s,\mathfrak s]$.  As $N_l(\mathfrak s)$ is the Lie algebra of  a compact group, it is fully reducible and so there exists a complement $\beta$ to $[\mathfrak s,\mathfrak s]$ in $\mathfrak s$ which is preserved by $N_l(\mathfrak s)$. Finally, since every derivation is valued in $\mathfrak{n(s)} = [\mathfrak s,\mathfrak s]$ (Lemma  \ref{lemma: derivation of solvabe goes to nilradical}), we see that $\beta \subset Ker~D$, for all $D\in N_l(\mathfrak s)$.

Next we show that $\mathfrak{n(s')} = \mathfrak{n(s)}$. 
Recall, the nilradical of a solvable Lie algebra is the collection of nilpotent elements.  As $[\mathfrak s,\mathfrak s] \subset \mathfrak s'$ consists of nilpotent elements, we consider $Y+D\in \mathfrak s'$ where $Y\in\beta$ and $D$.  Assume $Y+D$ is nilpotent.  As $Y\in Ker~D$, $ad~Y$ and $D$ commute and so the generalized eigenspaces of $ad~Y$ are preserved by $D$.  This implies the  eigenvalues of $Y+D$ are sums of those from $ad~Y$ and $D$.  However, $Y+D$ being nilpotent means that zero is the sole generalized eigenvalue and almost complete solvability implies $D=0$.  Thus $Y$ is nilpotent.  This is a contradiction since $\beta$ is a complement of $\mathfrak{n(s)}$.  This proves $\mathfrak{n(s')} = \mathfrak{n(s)}$.

Let $X$ be a positive element of $\mathfrak s$.  Observing that Lie's theorem implies the  eigenvalues of $ad~X$ are the same as those for $ad~(X+N) = ad~X+ad~N$, for any $N\in\mathfrak{n(s)}$ (Lemma \ref{lemma: eigenvalues of ad X = those of ad X+N for N nilpotent}),  we see that there exists a positive element in $\beta$.  From the above, we see that there exists $D\in N_l(\mathfrak s)$ such that $X+D\in \mathfrak s'$ and $ad~X$ commutes with $D$.  Now $D$ preserves the generalized eigenspaces of $ad~X$ and we see that $ad~X+D$ has  eigenvalues with positive real part on $\mathfrak{n(s')}=\mathfrak{n(s)}$.  That is, $\mathfrak s'$ is positive.
\end{proof}

Using this lemma,  we may assume that $\mathfrak s$ is in standard position and, by Lemma \ref{lemma: s2 properties}, $\mathfrak s$ admits an $LR$-decomposition  $\mathfrak s = \mathfrak s_1 + \mathfrak s_2$ with $\mathfrak s_2 < \mathfrak g_2$ an ideal of the full isometry algebra $\mathfrak g$.   
Now, for $X\in\mathfrak n (\mathfrak s_2)$,  if $ad~X|_{\mathfrak s_2} ^k = 0$ then $ad~X^{k+1}=0$.  This shows $\mathfrak n ( \mathfrak s_2) \subset \mathfrak{n(s)}$.

\begin{lemma} If non-trivial, the ideal $\mathfrak s_2$ is positive.
\end{lemma}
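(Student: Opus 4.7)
The plan is to extract a positive element of $\mathfrak s_2$ from a given positive element $X \in \mathfrak s$. First decompose $X = X_1 + X_2$ according to the LR-splitting, and further write $X_1 = A + N$ with $A \in \mathfrak a_1$ and $N \in \mathfrak n_1$ via the Iwasawa decomposition $\mathfrak s_1 = \mathfrak a_1 + \mathfrak n_1$.

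The first step is to dispose of the $N$ component. Since $N \in \mathfrak n_1$ is ad-nilpotent in the semi-simple algebra $\mathfrak g_1$, its image under the representation $\rho: \mathfrak g_1 \to \mathrm{Der}(\mathfrak s_2)$ is a nilpotent operator on $\mathfrak s_2$ (ad-nilpotent elements of a semi-simple Lie algebra act nilpotently in any finite-dimensional representation). Combined with $ad\,N$ being nilpotent on $\mathfrak s_1$, this shows $N \in \mathfrak n(\mathfrak s)$. By Lemma \ref{lemma: eigenvalues of ad X = those of ad X+N for N nilpotent}, the element $X' = X - N = A + X_2$ has the same eigenvalues on $\mathfrak n(\mathfrak s)$ as $X$, so $X'$ is also a positive element of $\mathfrak s$.

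I then claim that $X_2$ itself is a positive element of $\mathfrak s_2$. Because $\mathfrak s_2$ is an ideal of $\mathfrak s$, its nilradical $\mathfrak n(\mathfrak s_2)$ is characteristic and hence preserved by $ad(\mathfrak s)$. Applying Lie's theorem to $ad(\mathfrak s)$ on $\mathfrak n(\mathfrak s_2) \otimes \mathbb{C}$, I would choose a basis in which $ad(\mathfrak s)$ is upper triangular and the semi-simple part $\rho(\mathfrak a_1)$ is diagonal. In this basis the diagonal entries of $ad\,X'$ split as $\mu_j(A) + \beta_j(X_2)$, where $\mu_j: \mathfrak a_1 \to \mathbb{R}$ is the $j$-th $\rho$-weight on $\mathfrak n(\mathfrak s_2)$ (real-valued, since $\rho(\mathfrak a_1)$ is semi-simple with real eigenvalues) and $\beta_j: \mathfrak s_2/[\mathfrak s_2,\mathfrak s_2] \to \mathbb{C}$ is the $j$-th $\mathfrak s_2$-weight. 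Positivity of $X'$ yields $\mu_j(A) + \operatorname{Re}(\beta_j(X_2)) > 0$ for every $j$ indexing a basis vector of $\mathfrak n(\mathfrak s_2)$.

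The hard part is passing from this to the pointwise inequality $\operatorname{Re}(\beta_j(X_2)) > 0$. Indices with $\mu_j(A) \leq 0$ are immediate. For indices with $\mu_j(A) > 0$, my approach would be to exploit that $\mathfrak n(\mathfrak s_2)$ is a $\mathfrak g_1$-subrepresentation of $\mathfrak s_2$, so the multiset $\{\mu_j\}$ is Weyl-invariant and sums to zero on $\mathfrak a_1$; thus every strictly positive $\mu_j(A)$ is balanced by some strictly negative $\mu_{j'}(A)$ coming from a Weyl-conjugate weight, and the positivity constraints for $j$ and $j'$ combined with the almost-complete-solvability of $\mathfrak s_2$ (inherited from $\mathfrak s$, as in the previous lemma) together with Corollary \ref{cor:reductive elements of commutator of solvable} (to exclude the boundary case $\operatorname{Re}(\beta_j(X_2))=0$) should force $\operatorname{Re}(\beta_j(X_2)) > 0$ for every $j$. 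Making this Weyl-balancing argument rigorous is the main technical challenge.
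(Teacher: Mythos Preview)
Your overall strategy---decompose $X=X_1+X_2$ and show $X_2$ is a positive element of $\mathfrak s_2$---matches the paper's, and the reduction $X\mapsto A+X_2$ is fine. The gap is in the final ``Weyl-balancing'' step. The Weyl group permutes the $\mathfrak a_1$-weights $\mu_j$ within each $\mathfrak g_1$-irreducible constituent of $\mathfrak n(\mathfrak s_2)$, but it has no action on the $\mathfrak s_2$-weights $\beta_j$: if $\mu_{j'}=w\cdot\mu_j$ there is no reason $\beta_{j'}$ should equal $\beta_j$. Hence the inequality $\mu_{j'}(A)+\operatorname{Re}\beta_{j'}(X_2)>0$ for the ``balancing'' index $j'$ gives no control over $\operatorname{Re}\beta_j(X_2)$. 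The appeal to Corollary~\ref{cor:reductive elements of commutator of solvable} does not rescue this either---that corollary rules out nonzero fully reducible elements inside a commutator, which is unrelated to excluding $\operatorname{Re}\beta_j(X_2)=0$.

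The paper fixes this with a different normalization and a trace argument. Since $\mathfrak g_1$ acts fully reducibly on $\mathfrak s_2$ and every derivation of $\mathfrak s_2$ takes values in $\mathfrak n(\mathfrak s_2)$ (Lemma~\ref{lemma: derivation of solvabe goes to nilradical}), there is a $\mathfrak g_1$-invariant complement $\beta$ to $\mathfrak n(\mathfrak s_2)$ on which $\mathfrak g_1$ acts \emph{trivially}. Subtracting an element of $\mathfrak n(\mathfrak s_2)\subset\mathfrak n(\mathfrak s)$ from $X$ (which preserves positivity by Lemma~\ref{lemma: eigenvalues of ad X = those of ad X+N for N nilpotent}) one may take $X_2\in\beta$, so $ad\,X_2|_{\mathfrak s_2}$ commutes with the $\mathfrak g_1$-action. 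Then each generalized eigenspace $V_\lambda$ of $ad\,X_2$ on $\mathfrak n(\mathfrak s_2)$ is $\mathfrak g_1$-invariant, whence $\operatorname{tr}(ad\,X_1|_{V_\lambda})=0$ because $\mathfrak g_1$ is semi-simple. As $V_\lambda\subset\mathfrak n(\mathfrak s)$, positivity of $X$ gives
\[
0<\operatorname{Re}\operatorname{tr}(ad\,X|_{V_\lambda})=\operatorname{Re}\operatorname{tr}(ad\,X_2|_{V_\lambda})=\operatorname{Re}(\lambda)\cdot\dim V_\lambda,
\]
so every eigenvalue of $ad\,X_2|_{\mathfrak n(\mathfrak s_2)}$ has positive real part. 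This is precisely your observation that the $\mu_j$'s sum to zero, but applied on each $V_\lambda$ separately rather than globally---arranging that the $V_\lambda$ are $\mathfrak g_1$-stable is what makes the argument go through.
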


\begin{proof}
	Consider the adjoint action of $\mathfrak g_1$ on the ideal $\mathfrak s_2$.   As $\mathfrak g_1$ is semi-simple, this representation is fully reducible.  Thus, there exists an invariant decomposition $\mathfrak s_2 = \beta + \mathfrak n(\mathfrak s_2)$.  Since derivations of solvable Lie algebras take their image in the nilradical (Lemma \ref{lemma: derivation of solvabe goes to nilradical}), we see that $\mathfrak g_1$ commutes with $\beta$.
	
	Now take $X\in \mathfrak s$ which is a positive element and write it as $X=X_1+X_2 \in \mathfrak s_1  +\mathfrak s_2$.  As positivity does not change upon adding an element of the nilradical, by the above lemma we may assume that $X_2\in \beta$.  
Since $\mathfrak g_1$ commutes with $X_2$, we have that $ad~\mathfrak g_1|_{\mathfrak s_2}$ commutes with $ad~X_2|_{\mathfrak s_2}$ and so preserves the generalized eigenspaces $V_\lambda$ of $ad~X_2|_{\mathfrak s_2}$.
		
	Observe that $ad~X_1|_{V_\lambda}$ is an element of a representation of the  semi-simple algebra $ad~\mathfrak g_1|_{V_\lambda}$; hence $tr \ ad~X_1|_{V_\lambda} =0$.  The space $V_\lambda$ is preserved by $ad~X$ and so the generalized eigenvalues of $ad~X$ restricted to $V_\lambda$ still have positive real part.  Now we see that
		$$ 0 < tr \ ad~X|_{V_\lambda}  = tr \ ad~X_2|_{V_\lambda} = \mathbb R(\lambda) \cdot \dim V_\lambda  $$
This shows that $\mathfrak s_2$ is positive with positive element $X_2$.	
\end{proof}

\begin{lemma} If $\mathfrak s_2$ is non-trivial, $\mathfrak g$ is non-unimdular.
\end{lemma}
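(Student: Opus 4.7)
The plan is to exhibit a single element $X \in \mathfrak g$ with $\operatorname{tr}(\operatorname{ad} X) \ne 0$, and the natural candidate is a positive element $X_2$ of $\mathfrak s_2$, whose existence is guaranteed by the previous lemma. The payoff from the standing hypotheses (almost completely solvable, positive, in standard position) together with Lemma \ref{lemma: s2 properties} is that $\mathfrak s_2$ is not merely an ideal of $\mathfrak s$ but an ideal of the full isometry algebra $\mathfrak g$. This will let us compute $\operatorname{tr}_{\mathfrak g}(\operatorname{ad} X_2)$ from data intrinsic to $\mathfrak s_2$.

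First I would reduce the ambient trace to an internal one. Since $\mathfrak s_2$ is an ideal of $\mathfrak g$, the operator $\operatorname{ad} X_2 : \mathfrak g \to \mathfrak g$ has image contained in $\mathfrak s_2$. Choosing any vector space complement $V$ to $\mathfrak s_2$ in $\mathfrak g$ and writing $\operatorname{ad} X_2$ in block form with respect to $\mathfrak g = V \oplus \mathfrak s_2$, the block corresponding to $V \to V$ is zero, so
\[
\operatorname{tr}_{\mathfrak g}(\operatorname{ad} X_2) \;=\; \operatorname{tr}_{\mathfrak s_2}\bigl(\operatorname{ad} X_2|_{\mathfrak s_2}\bigr).
\]
Next I would cut this trace down to the nilradical of $\mathfrak s_2$. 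The restriction $\operatorname{ad} X_2|_{\mathfrak s_2}$ is an inner derivation of the solvable Lie algebra $\mathfrak s_2$, so by Lemma \ref{lemma: derivation of solvabe goes to nilradical} it sends $\mathfrak s_2$ into $\mathfrak n(\mathfrak s_2)$. Therefore it acts as zero on the quotient $\mathfrak s_2 / \mathfrak n(\mathfrak s_2)$, giving
\[
\operatorname{tr}_{\mathfrak s_2}\bigl(\operatorname{ad} X_2|_{\mathfrak s_2}\bigr) \;=\; \operatorname{tr}_{\mathfrak n(\mathfrak s_2)}\bigl(\operatorname{ad} X_2|_{\mathfrak n(\mathfrak s_2)}\bigr).
\]

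Finally, I would invoke positivity of $\mathfrak s_2$ established in the previous lemma: the eigenvalues of $\operatorname{ad} X_2|_{\mathfrak n(\mathfrak s_2)}$ have positive real part. Since this operator is real, its complex eigenvalues occur in conjugate pairs, and its trace equals the sum of the real parts of its eigenvalues, which is strictly positive. Consequently $\operatorname{tr}_{\mathfrak g}(\operatorname{ad} X_2) > 0$, so $\mathfrak g$ is non-unimodular.

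I do not anticipate a genuine obstacle here: all the hard structural work has already been done in Lemma \ref{lemma: s2 properties} (promoting $\mathfrak s_2$ to an ideal of $\mathfrak g$) and in the preceding lemma (passing positivity from $\mathfrak s$ through the standard modification and then to the $LR$-factor $\mathfrak s_2$). The only subtlety worth checking is that one does indeed take the positive element inside $\mathfrak s_2$ rather than merely in $\mathfrak s$; but the proof of the preceding lemma explicitly provides such $X_2 \in \mathfrak s_2$, so this is already in hand.
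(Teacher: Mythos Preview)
Your argument is correct and is essentially the same as the paper's: pick a positive element $X_2\in\mathfrak s_2$, use that $\mathfrak s_2$ is an ideal of $\mathfrak g$ (Lemma \ref{lemma: s2 properties}) to get $\operatorname{tr}_{\mathfrak g}\operatorname{ad}X_2=\operatorname{tr}_{\mathfrak s_2}\operatorname{ad}X_2$, and conclude this is strictly positive. The paper simply omits your intermediate reduction to $\mathfrak n(\mathfrak s_2)$ and the remark about conjugate eigenvalues, writing only $\operatorname{tr}_{\mathfrak g}\operatorname{ad}X=\operatorname{tr}_{\mathfrak s_2}\operatorname{ad}X>0$.
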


\begin{proof}
Take $X\in \mathfrak s_2$ which is a positive element.  As $\mathfrak s_2$ is ideal (Lemma \ref{lemma: s2 properties}), we see that $tr_\mathfrak g \  ad~X = tr_{\mathfrak s_2} \ ad~X > 0$.
\end{proof}

Finally, if $\mathfrak g$ is unimodular, we see that $\mathfrak s_2$ is trivial which implies $\mathfrak g = \mathfrak g_1$ is semi-simple.  By Theorem \ref{thm: linear isom grp} and Corollary \ref{cor: stabilizer respects levi decomposition}, we see that $G$ is linear and the maximal compact subgroup $K$ fixes some point.  Thus, $S$ is isometric to $G_{nc}/K_{nc}$ with some $G_{nc}$-invariant metric, where $K_{nc}$ is a maximal compact subgroup of $G_{nc}$.  By \cite[Ch.~IV, Prop.~3.4]{Helgason},   $S$ is in fact a symmetric space.  This proves Theorem \ref{thm: general finite volume quotient is symmetric}.

\begin{proof}[Proof of Corollary  \ref{cor: negative curv rigidity}]
	Let $G/H$ be a homogeneous space which admits a $G$-invariant metric of negative curvature.  For the moment, we endow $G/H$ with a metric of negative curvature.  From \cite{HeintzeHMNC}, we see  that there exists a solvable group $S$ of isometries such that
		\begin{quote}
		\begin{enumerate}
		\item The codimension of $[\mathfrak s,\mathfrak s]$ is 1.
		\item The algebra $\mathfrak s$ is positive.
		\end{enumerate}
\end{quote}		
An algebra satisfying these two properties is called a negative curvature algebra.

From this, we immediately see that $\mathfrak s$ is almost completely solvable and admissible (Proposition \ref{prop: neg curv alg is almost compl solv}).  Therefore, the transitive group $G$ contains a transitive solvable subgroup $R$ (Theorem \ref{thm: linear isom grp} and Proposition \ref{prop: linear isom implie strongly solvable}).  This solvable group can be assumed to be acting simply transitively and, again by the work of Heintze \cite{HeintzeHMNC}, its Lie algebra is a negative curvature algebra.  Thus, $R$ is almost completely solvable and positive.  

As any $G$-invariant metric on $G/H$ gives rise to a left-invariant metric on $R$,  Theorem \ref{thm: general finite volume quotient is symmetric} applies and our result holds. 
\end{proof}

\begin{proof}[Proof of Corollary \ref{cor: einstein alg rigidity}]
	This corollary follows immediate from the theorem and the classification results of Lauret \cite{LauretStandard}.  There one may quickly deduce that an algebra admitting an Einstein metric is indeed almost completely solvable and positive.
\end{proof}

\providecommand{\bysame}{\leavevmode\hbox to3em{\hrulefill}\thinspace}
\providecommand{\MR}{\relax\ifhmode\unskip\space\fi MR }
\providecommand{\MRhref}[2]{%
  \href{http://www.ams.org/mathscinet-getitem?mr=#1}{#2}
}
\providecommand{\href}[2]{#2}

\end{document}